\newcommand{\TheTitle}{On Particles and Splines in Bounded Domains} 
\newcommand{\TheAuthors}{Matthias Kirchhart}
\headers{\TheTitle}{\TheAuthors}
\title{On Particles and Splines in Bounded Domains}
\author{\TheAuthors\footnotemark[1]\thanks{Center for Computational
Engineering Science, Mathematics Division. RWTH Aachen University, Germany.
(\email{kirchhart@mathcces.rwth-aachen.de})}}
\newcommand{\spdim}{\mathrm{D}}   
\newcommand{\reals}{\mathbb{R}}
\newcommand{\wholespace}{\reals^\spdim}
\newcommand{\gdom}{\bigcirc} 
\newcommand{\qdom}{\square}  
\newcommand{\pd}{\partial}
\newcommand{\ddx}[2]{\frac{{\mathrm d}{#1}}{{\mathrm d}{#2}}}
\newcommand{\pdx}[2]{\frac{\partial {#1}}{\partial {#2}}}
\newcommand{\idx}[4]{\int_{#1}^{#2} {#3}\,{\mathrm d}{#4}}
\DeclareMathOperator{\supp}{supp}
\DeclareMathOperator{\clos}{clos}
\DeclareMathOperator{\inte}{int}
\DeclareMathOperator{\meas}{meas}
\DeclareMathOperator{\cond}{cond}
\newcommand{\vv}[1]{\mathbf{#1}}        
\newcommand{\gv}[1]{\boldsymbol{#1}}    
\newcommand{\sm}[1]{\mathsf{#1}}        
\newcommand{\sv}[1]{\mathsf{#1}}
\begin{document}
\maketitle

\begin{abstract}
We propose numerical schemes that enable the application of particle methods for
advection problems in general bounded domains. These schemes combine particle
fields with Cartesian tensor product splines and a fictitious domain approach.
Their implementation only requires a fitted mesh of the domain's boundary, and
not the domain itself, where an unfitted Cartesian grid is used. We establish
the stability and consistency of these schemes in $W^{s,p}$-norms, $s\in\reals$,
$1<p\leq\infty$.
\end{abstract}

\begin{keywords}
particle methods, splines, fictitious domains, ghost penalty
\end{keywords}

\begin{AMS}
65M12, 65M60, 65M75, 65M85, 65F35, 65D07, 65D10, 65D32
\end{AMS}

\section{Introduction}
We begin by introducing a simple toy problem: let $\Omega\subset\wholespace$ be
an open, bounded Lipschitz domain and let $\vv{a}: \bar{\Omega}\times[0,T]\to
\wholespace$ denote a given, smooth velocity field. Moreover, let us for
simplicity assume that $\vv{a}$ satisfies $\vv{a}\cdot\vv{n} = 0$ on the boundary
$\pd\Omega$, such that we do not need to worry about boundary conditions. We are
then interested in solving the initial value problem for the transport equation,
i.\,e., given initial data $u_0:\Omega\to\reals$, find $u: \Omega\times[0,T)
\to\reals$ such that:
\begin{equation}\label{eqn:advequation}
\begin{split}
\left\lbrace
\begin{aligned}
\pdx{u}{t} + (\vv{a}\cdot\nabla)u &= 0            &    \text{in }&\Omega\times(0,T), \\
u(\vv{x},0)                       &= u_0(\vv{x})  &    \text{on }&\Omega.
\end{aligned}
\right.
\end{split}
\end{equation}

\subsection{Grid-based Schemes}
It is well-known that the discretization of this problem with conventional
grid-based schemes such as finite differences, volumes, or elements causes a lot
of problems when $\vv{a}$ is large: for explicit time-stepping schemes the
CFL-condition forces one to use tiny time-steps. For the spatial discretization,
on the other hand, a common approach to guarantee stability is upwinding.
But this comes at the cost of introducing significant amounts of spurious,
numerical viscosity: in a numerical solution with $\vv{a}\equiv\mathrm{const}$
an initial step function $u_0$ quickly turns into a \enquote{ridge} of ever
decreasing slope. This is the source of many of the difficulties experienced
in numerical simulations of turbulent flows and computational fluid dynamics
in general. In short, while there certainly are more advanced schemes, it is
fair to say that it is very hard to construct grid-based methods that are
accurate, stable, and efficient when applied to advection problems.

\subsection{Particle Methods}
Particle methods like Smoothed Particle Hydrodynamics~(SPH) or Vortex Methods~(VM)
pursue a quite different approach to tackle this problem. Here, the initial
data $u_0$ is approximated with a special quadrature rule $u_{0,h}$ called
\emph{particle field}. It consists of weights $U_i\in\reals$ and associated
nodes $\vv{x}_i\in\Omega$, $i=1,\ldots,N$, such that for arbitrary smooth
functions $\varphi$ one has:
\begin{equation}
\sum_{i=1}^{N}U_i\varphi(\vv{x}_i) \approx \idx{\Omega}{}{u_0\varphi}{\vv{x}}.
\end{equation}
Equivalently, $u_{0,h}$ may be interpreted as a functional: $u_{0,h}=\sum_{i=1}^{N}
U_i\delta_{\vv{x}_i}$, where $\delta_{\vv{x}_i}$ denotes the Dirac $\delta$-%
functional centered at $\vv{x}_i$. The reason for such an approximation is
as follows. Given such discretized initial data  $u_{0,h}\approx u_0$, it can
be shown that the problem~\eqref{eqn:advequation} is well-posed and that its
unique solution is given by moving the particles with the flow, i.\,e., by
modifying $\vv{x}_i$ over time according to:
\begin{equation}\label{eqn:particleode}
\ddx{\vv{x}_i}{t}(t) = \vv{a}\bigl(\vv{x}_i(t),t\bigr)\qquad i = 1,\ldots,N.
\end{equation}
The fact that \emph{this is the exact solution} means that apart from the
discretization of the initial data, no further error is introduced by the
spatial discretization over time. Moreover, for $\vv{a}\in L^\infty\bigl(
W^{n,\infty}(\Omega),[0,T]\bigr)$, $n\in\mathbb{N}$, one can show that the
advection equation is stable in the sense that for all $t\in [0,T]$ the
following holds:\footnote{Here and throughout this text the notation $a\lesssim
b$ will mean that there exists a constant $C>0$ independent of $a$, $b$, $h$,
and $\sigma$ such that $a\leq Cb$. The variables $h$ and $\sigma$ refer to
certain mesh sizes and will be made precise later.} 
\begin{equation}\label{eqn:stableadvection}
\Vert u(t)\Vert_{W^{s,p}(\Omega)}\lesssim
\Vert u_0\Vert_{W^{s,p}(\Omega)}, \qquad -n\leq s\leq n.
\end{equation}
In this clarity these facts seem to first have been established by Raviart
\cite{raviart1985} and Cottet~\cite{cottet1988} in the 1980s. In the context of
particle methods the Dirac $\delta$-functional has already been mentioned in
1957 in Appendix~II of Evans' and Harlow's work on the Particle-in-Cell method;
\cite{harlow1957b} particle methods themselves at least date back to the early
1930s and Rosenhead's vortex sheet computations.~\cite{rosenhead1931} In practice
the ODE system~\eqref{eqn:particleode} is solved numerically using, e.\,g., a
Runge--Kutta scheme and it can be shown that there is no time-step constraint
depending on the discretization to guarantee the stability of the method.
Simulations with billions of particles have been carried out,~\cite{yokota2013} 
and practice has shown that particle schemes have excellent conservation
properties and are virtually free of numerical viscosity. In short,
particle methods are ideally suited for advection problems.

\begin{figure}\centering
\includegraphics{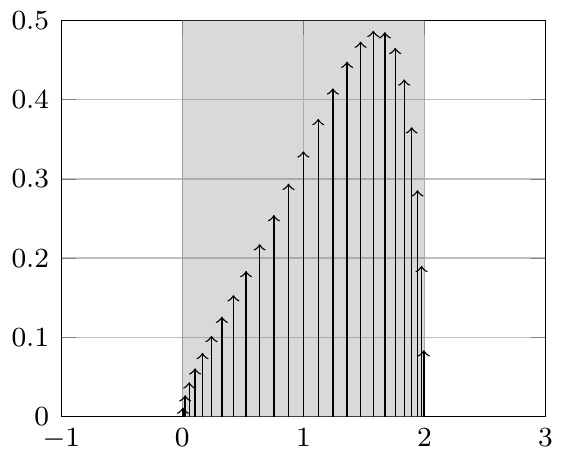}\hspace{3em}%
\includegraphics{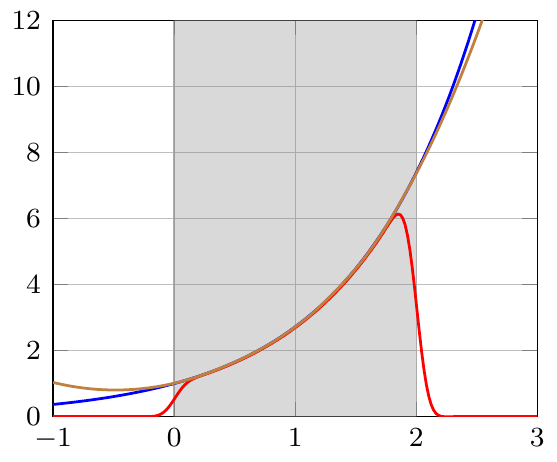}\hspace{3em}%
\caption{\label{fig:regularisation}Approximation of the exponential function
({\color{blue}blue}) on the interval $[0,2]$. On the left: a highly accurate
particle approximation. The particle weights, depicted by the arrows' heights,
usually do not correlate well with the local function values. On the right:
conventional smoothing of the particle field yields a globally smooth
approximation~({\color{red}red}) of the target function's non-smooth
zero-extension. This results in poor approximations near its discontinuities at
the boundaries. The stabilized $L^2$-projection~({\color{brown}brown})~\cite{kirchhart2017b}
yields an approximation of a smooth extension. It is not only accurate on the
entire interval but also extra\-polates well after its ends.}
\end{figure}

\subsection{Particle Methods in Bounded Domains}
A particle field $u_h$ can only be interpreted as a special quadrature rule; it
is important to understand that \emph{the $U_i$ are weights and not function
values.} In general, the $U_i$ do \emph{not} give a good picture of the local
values $u(\vv{x}_i)$ of the approximated function, much like the quadrature
weights from ordinary quadrature rules do not give a good picture of the number~1. 
This can for example clearly be seen on the left of \cref{fig:regularisation},
where a highly accurate particle approximation of the exponential function
on the interval $[0,2]$ is depicted. In reality, however, one is of course interested
in function values and a particle approximation is of little practical use. This
work therefore focuses on the following two questions:
\begin{enumerate}
\item Given a function $u$, how does one construct a particle approximation
$u_h\approx u$ and what error bounds does it fulfill? Here, $h$ denotes some
form of particle spacing and will be defined precisely later. This problem is
called \emph{particle initialization.}
\item Given a particle approximation $u_h\approx u$, how does one obtain a
function approximation $u_\sigma\approx u_h$, and what error-bounds does it
fulfill? Here, $\sigma$ denotes a smoothing length, which will also be defined
precisely later. This problem is called \emph{particle regularization.}
\end{enumerate}

While in the whole space case these questions are well understood, one of the
reasons why particle methods are so rarely used in engineering practice is the
difficulty to answer these questions in general bounded domains. In this work we
will develop and analyze schemes which aim two solve these problems. Our proposed
solutions will only require a mesh of the boundary $\pd\Omega$, not of the domain
$\Omega$ itself. Instead, a simple, unfitted Cartesian mesh is used for $\Omega$,
which can be obtained easily by a process known in computer graphics as \enquote{voxelization}.

The first problem can essentially be solved using quadrature rules. Especially
particle regularization, however, is not obvious in the presence of boundaries.
The most common approach to the regularization problem is to mollify the particle
field with a certain, radially symmetric \emph{blob-function} $\zeta_\sigma$:
$u_\sigma:=u_h\star\zeta_\sigma$, where $\sigma$ denotes the radius of the blob's
core.~\cite[Section~2.3]{cottet2000} These blobs are \enquote{unaware} of the
boundaries and yield poor approximations in their vicinity. In fact, this approach
yields globally smooth approximations of the zero-extension of $u$. Unless $u$
itself and its derivatives vanish on $\pd\Omega$, however, this extension is not
smooth and cannot be well approximated with a smooth function. This is depicted
on the right of \cref{fig:regularisation}. In Particle-in-Cell schemes one
uses interpolation formulas to obtain a grid-based approximation of the particle
field. In the vicinity of boundaries these formulas need to be specifically
adapted to the particular geometry at hand and cannot be used for arbitrary
domains. Recently, however, Marichal, Chatelain, and Winckelmans~\cite{marichal2016}
introduced a promising interpolation scheme for general boundaries, but a rigorous
error analysis seems unavailable at this time. They also give a review of some
other previous approaches and come to the conclusion that \enquote{None of the
schemes above truly succeeds in the generation of accurate particle -- or grid --
values around boundaries of arbitrary geometry.}

Recently, we proposed another approach to the regularization problem, which is
based on the $L^2$-projection and allows a rigorous analysis.~\cite{kirchhart2017b}
First, $C^\infty$-smooth finite-element spaces $V_\sigma$ on simple uniform
Cartesian grids are created, where $\sigma$ denotes the length of the cells. Then
a fictitious domain approach is employed and one searches the $L^2$-projection of
$u$ onto $V_\sigma$. In other words one looks for $u_\sigma\in V_\sigma$ such that
\begin{equation}
\idx{\Omega}{}{u_\sigma v_\sigma}{\vv{x}} = \idx{\Omega}{}{uv_\sigma}{\vv{x}}
\qquad
\forall v_\sigma\in V_\sigma.
\end{equation}
If one is only given a particle approximation $u_h\approx u$, the integral on
the right is replaced by $\sum_{i=1}^{N}U_iv_\sigma(\vv{x}_i)$. The addition
of a high-order stabilization term then ensures accuracy and stability of
the method independent of the position of the boundary $\pd\Omega$ relative
to the Cartesian grid. It was established that the resulting $u_\sigma$ then
approximates a smooth extension of $u$ and is optimal in a certain sense. The
result of this approach corresponds to the brown line on the right of
\cref{fig:regularisation}, and this figure clearly highlights its accuracy at
the boundaries and even beyond.

\subsection{Novelty and Main Result}
In this work we are going to build on and extend the results from our previous
work in several ways. The spaces $V_\sigma$ from~\cite{kirchhart2017b} have the
disadvantage that  an explicit representation of their basis functions is unavailable.
As a first step we are therefore replacing these spaces with Cartesian tensor product
splines. Secondly, we extend our error analysis to general $W^{s,p}$-spaces, with
$s\in\mathbb{R}$, $1\leq p\leq\infty$. It turns out that splines and particles seem
to ideally complement each other and it is also possible to solve the problem of
initialization. The main result of this work is summarized in \cref{thm:main}. The
obtained error bounds closely mirror those given by Raviart~\cite{raviart1985} for
the blob-based regularization in the whole-space case. At the same time, our method
works for general bounded domains and is faster: evaluation of the obtained regularized
field costs $\mathcal{O}(1)$ operations, compared to $\mathcal{O}(N_{\text{particles}})$
for the blob-based approach.

\section{Spaces of Functions and Functionals}\label{sec:spaces}
In this section we will introduce the function spaces and recall some important
results that our analysis will make use of. Throughout this text we will assume
that the domain of interest $\Omega\subset\wholespace$ is an open, bounded set
that satisfies the strong local Lipschitz condition; for short, a Lipschitz domain.
This assumption will in particular allow us to make use of the Sobolev embeddings
as well as the Stein extension theorem. The symbol $\gdom\subset\wholespace$ will
be used as a placeholder for any bounded Lipschitz domain.

\subsection{Sobolev Spaces of Integer Order}
As usual, for $n\in\mathbb{N}$, $1\leq p\leq\infty$, the Sobolev spaces $W^{n,p}(\gdom)$
are given by:
\begin{equation}
\begin{split}
W^{n,p}(\gdom) & := \bigl\lbrace f: \gdom\to\reals\,\big|\, \partial^\alpha f\in L^p(\gdom)
\ \forall 0\leq |\alpha| \leq n\bigr\rbrace, \\
\Vert\cdot\Vert_{W^{n,p}(\gdom)} &:= \biggl(\sum_{k=0}^{n}|\cdot|_{W^{k,p}(\gdom)}^p\biggr)^{1/p}, \\
|\cdot|_{W^{k,p}(\gdom)} &:= \biggl(\sum_{|\alpha|=k}\Vert\pd^\alpha(\cdot)\Vert_{L^p(\gdom)}^p\biggr)^{1/p},
\end{split}
\end{equation} 
where $W^{0,p}(\gdom) := L^p(\gdom)$, $\alpha\in\mathbb{N}_0^\spdim$ denotes
a multi-index, $\pd^\alpha$ the weak derivative, and the usual modifications for
$p=\infty$:
\begin{equation}
\begin{split}
\Vert\cdot\Vert_{W^{n,\infty}(\gdom)} := \max_{k=0,\ldots,n}|\cdot|_{W^{k,\infty}(\gdom)}, \\
|\cdot|_{W^{k,\infty}(\gdom)} := \max_{|\alpha|=k}\Vert\pd^\alpha(\cdot)\Vert_{L^\infty(\gdom)}. \\
\end{split}
\end{equation}

Whenever the index $1\leq p\leq\infty$ appears, we define $q$ as its Hölder
conjugate such that $\frac{1}{p} + \frac{1}{q} = 1$. For $n\neq 0$ we
\emph{define} $W^{-n,p}(\gdom) := \bigl(W^{n,q}(\gdom)\bigr)^{\prime}$ to
be the normed dual of $W^{n,q}(\gdom)$, following the convention of, e.\,g.,
Brenner and Scott,~\cite{brenner2008} \emph{but opposed to the convention}
$W^{-n,p}(\gdom) = \bigl(W^{n,q}_0(\gdom)\bigr)^{\prime}$ of Adams and
Fournier.~\cite{adams2003} We define the norm $\Vert\cdot\Vert_{W^{-n,p}(\gdom)}$
as usual, denoting the duality paring by $\langle\cdot,\cdot\rangle$:
\begin{equation}
\Vert\cdot\Vert_{W^{-n,p}(\gdom)} := \sup_{v\in W^{n,q}(\gdom)}
\frac{\langle\cdot, v\rangle}{\Vert v\Vert_{W^{n,q}(\gdom)}}.
\end{equation}

\subsection{Sobolev and Besov Spaces of Fractional Order}
We will later introduce spline spaces of approximation order $n\in\mathbb{N}$.
In terms of integer order Sobolev regularity, these splines however only lie
in $W^{n-1,p}$, which in the end would only allow us to prove suboptimal
results. We thus introduce intermediate spaces of fractional order, in terms
of which the splines possess the necessary amount of regularity.

We define intermediate spaces of fractional order using the \enquote{real}
interpolation method.~\cite[Chapter~7]{adams2003} In particular, for $0<
\theta<1$, $1\leq p,p'\leq\infty$ we define the Besov spaces as $B_{p'}^{\theta
n,p}(\gdom) := \bigl[L^p(\gdom),W^{n,p}(\gdom)\bigr]_{\theta,p'}$. Here $s:=
\theta n\in\reals_{+}$ measures the smoothness and $1\leq p\leq\infty$ denotes
the underlying $L^p(\gdom)$-space. Varying the secondary index $p'$ for fixed
values of $s$ and $p$ only results in miniscule changes; bigger values of $p'$
result in slightly larger spaces: $B_{p_1^\prime}^{s,p}(\gdom)\hookrightarrow
B_{p_2^\prime}^{s,p}(\gdom)$, $1\leq p_1^\prime \leq p_2^\prime\leq\infty$. On
the other hand, for every $r>s$ and $1\leq p'\leq\infty$ we have $B_{p'}^{r,p}%
(\gdom)\hookrightarrow B_1^{s,p}(\gdom)$. For $p\neq\infty$ this definition of
Besov spaces is equivalent to the one using appropriate moduli of smoothness.
(For $0<p\leq 1$ this has been established by De{\kern -.8pt}Vore and Sharpley.
\cite[Theorem~6.3]{devore1993} For $1<p<\infty$ a proof can be found in Adams'
and Fournier's book.~\cite[Theorem~7.47]{adams2003}) For this reason local
estimates can be summed up to obtain global ones.

For \emph{non-integers} $s>0$ we \emph{define} the Sobolev spaces of fractional
order as $W^{s,p}(\gdom) := B_p^{s,p}(\gdom)$. For $p\neq\infty$ these spaces
coincide with the Sobolev--Slobodeckij spaces,~\cite[Theorem~6.7]{devore1993}
but unless $p=2$ they differ from the fractional order Sobolev spaces obtained
by the \enquote{complex} interpolation method as defined by Adams and Fournier.
For integer values $s=k$ the Besov spaces $B_p^{k,p}(\gdom)$ do not coincide
with  $W^{k,p}(\gdom)$, except for the pathological case $p=2$.~\cite[Section~7.33]{adams2003}
However, one always has $B_1^{s,p}(\gdom)\hookrightarrow W^{s,p}(\gdom)\hookrightarrow
B_{\infty}^{s,p}(\gdom)$. For this reason, we will often first establish our
results for all integer values $s=k$ and then conclude by interpolation to the
intermediate spaces. 

The intermediate spaces with negative index are defined via interpolation,
analogously to the positive case: $W^{-\theta n,p}(\gdom):=\bigl[\bigl(L^q(\gdom)\bigr)',
\bigl(W^{n,q}(\gdom)\bigr)'\bigr]_{\theta,p}$. For $p\neq 1$, i.\,e.,
$q\neq\infty$, it can be shown that they in fact are the dual spaces of the
corresponding intermediate spaces with positive index: $W^{-s,p}(\gdom) =
\bigl(W^{s,q}(\gdom)\bigr)'$.~\cite[Theorem~3.7.1]{bergh1976} For this reason,
we will sometimes exclude the case $p=1$. In summary, the spaces $W^{s,p}(\gdom)$
are defined for all $s\in\mathbb{R}$, $1\leq p\leq\infty$.

\subsection{Sobolev Embeddings and Stein Extension}
Before moving on to the spline spaces, we recall the Stein extension theorem~%
\cite[Chapter VI, Theorem 5]{stein1970}: there exists a linear extension
operator $E$ that fulfills $\Vert Eu\Vert_{W^{s,p}(\wholespace)} \lesssim
\Vert u\Vert_{W^{s,p}(\gdom)}$ for all $u\in W^{s,p}(\gdom)$, $s\geq 0$,
$1\leq p\leq\infty$. We also will use the following variant of the Sobolev
embedding theorem: let $s > \tfrac{\spdim}{p}$ or $s = \spdim$ if $p=1$.
Then $W^{s,p}(\gdom)\hookrightarrow C(\gdom)$ and $\Vert u\Vert_{L^\infty(\gdom)}
\lesssim\Vert u\Vert_{W^{s,p}(\gdom)}$. Taking into account the secondary index
of Besov spaces, for $p\neq\infty$ one can refine the embedding to
$B_1^{\frac{\spdim}{p},p}(\gdom)\hookrightarrow C(\gdom)$.~\cite[Theorem~4.12
and Theorem~7.34]{adams2003}

\subsection{Spline Spaces}
The spline spaces will be defined on uniform Cartesian grids, which we
introduce first, after which we define the spline spaces and recall some
of their properties from approximation theory.
\begin{definition}[Cartesian Grid and Fictitious Domains]
Let $\sigma>0$ be given. With each $\mathbf{i}\in\mathbb{Z}^\spdim$
we associate a Cartesian grid-point $\vv{x_i}:= (i_1\sigma, i_2\sigma,\ldots,i_\spdim\sigma)^\top$
and an element $Q_\vv{i}:=\prod_{d=1}^{\spdim}\bigl(i_d\sigma,\,(i_d+1)\sigma\bigr)$.
We define the fictitious domain $\Omega_\sigma$ as the union of all elements that
intersect the physical domain $\Omega$. Furthermore we define cut and uncut
elements $\Omega_\sigma^\Gamma$ and  $\Omega_\sigma^\circ$, respectively:
\begin{equation}
\begin{split}
\Omega_\sigma        &:= \inte\ \bigcup\bigl\lbrace \clos{Q_\vv{i}}\,\bigr|\,
\meas_\spdim(Q_\vv{i}\cap\Omega)>0\bigr\rbrace,\\
\Omega_\sigma^\Gamma &:= \inte\ \bigcup\bigl\lbrace \clos{Q_\vv{i}}\,\bigr|\,
Q_\vv{i}\in\Omega_\sigma\wedge Q_\vv{i}\not\subset\Omega\bigr\rbrace,\\
\Omega_\sigma^\circ &:= \inte\ \bigcup\bigl\lbrace \clos{Q_\vv{i}}\,\bigr|\,
Q_\vv{i}\in\Omega_\sigma\wedge Q_\vv{i}\subset\Omega\bigr\rbrace.
\end{split}
\end{equation}
The stabilization will make use of the following set of faces near the boundary:
\begin{equation}
\mathcal{F}_\sigma := \lbrace F\text{ is a face of some element } Q_\vv{i}\in\Omega^\Gamma_\sigma
\text{ and } F\notin\partial\Omega_\sigma\rbrace.
\end{equation}
\end{definition}
Here and in what follows we write $Q_\vv{i}\in\Omega_\sigma,
Q_\vv{i}\in\Omega_\sigma^\Gamma$, and $Q_\vv{i}\in\Omega_\sigma^\circ$ to refer
to the elements these domains are composed of. An illustration of these definitions
is given in \cref{fig:fictitious_domains}. We will make use of the following
somewhat technical assumption: for every $Q_\vv{i}\in\Omega_\sigma^\Gamma$ there
exists a finite sequence $(F_{\vv{i},1},F_{\vv{i},2},\ldots,F_{\vv{i},K})\subset
\mathcal{F}_\sigma$ such that the following conditions are fulfilled: every two
subsequent $F_{\vv{i},k}$ and $F_{\vv{i},k+1}$ are faces of a single element
$Q_\vv{j}$, the number $K$ is bounded independent of $\sigma$ and $\vv{i}$, and
the last face $F_{\vv{i},K}$ belongs to an uncut element $Q_\vv{i}\in\Omega_\sigma^\circ$.
This assumption means that uncut cells can always be reached from cells in
$\Omega_\sigma^\Gamma$ by crossing a bounded number of faces. For sufficiently
small $\sigma$ this condition is often fulfilled with $K=1$; if necessary it can
be enforced by moving additional elements from $\Omega_\sigma^\circ$ to
$\Omega_\sigma^\Gamma$. 

\begin{figure}\centering%
\includegraphics{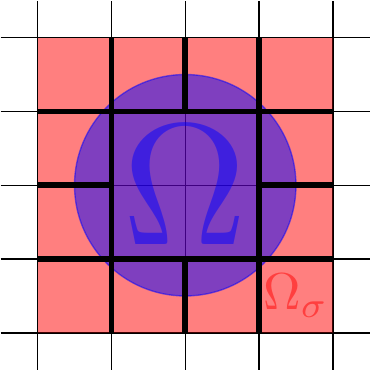}%
\caption{\label{fig:fictitious_domains}An illustration of the fictitious domain
approach. The domain $\Omega$ ({\color{blue}blue}), in this case a circle, may
intersect the infinite Cartesian grid in an arbitrary manner. The fictitious
domain~$\Omega_\sigma$~({\color{red}red}) is defined as the union of all
intersected cells. The domain $\Omega_\sigma^\circ$ in this case consists of the
four elements entirely lying in $\Omega$. The set of faces $\mathcal{F}_\sigma$
is highlighted using bold lines. It can be thought of forming a bridge between
$\Omega_\sigma^\circ$ and the remaining elements in $\Omega_\sigma^\Gamma$.}%
\end{figure}%

\begin{definition}[Spline Spaces]
Given $n\in\mathbb{N}$ and $1\leq p\leq\infty$, we define the tensor product
spline space on the Cartesian grid, equipped with the $L^p$-norm:
\begin{equation}
\begin{split}
V_\sigma^{n,p}(\wholespace) & := \bigl\lbrace
f:\wholespace\to\reals\,\big|\,f|_{Q_\vv{i}}\in\mathbb{Q}_{n-1}, \vv{i}\in\mathbb{Z}^\spdim\bigr\rbrace\cap
C_0^{n-2}(\wholespace), \\
\Vert\cdot\Vert_{V_\sigma^{n,p}(\wholespace)} &:=\Vert\cdot\Vert_{L^p(\wholespace)},
\end{split}
\end{equation}
where the symbol $\mathbb{Q}_{n-1}$ refers to the space of polynomials of
\textbf{coordinate-wise degree} $n-1$ or less. For $\gdom\subset\wholespace$,
we define $V_\sigma^{n,p}(\gdom)$ by restriction from $\wholespace$ to $\gdom$.
In analogy to the Sobolev spaces, the normed dual of $V_\sigma^{n,q}(\gdom)$
will be denoted by $V_\sigma^{-n,p}(\gdom)$, where $q$ denotes the Hölder
conjugate to $p$. Denoting as usual the duality paring by $\langle\cdot,\cdot\rangle$,
its norm is thus given by:
\begin{equation}
\Vert\cdot\Vert_{V_\sigma^{-n,p}(\gdom)} :=
\sup_{v_\sigma\in V_\sigma^{n,q}(\gdom)} \frac{\langle\cdot, v_\sigma\rangle}{\Vert v_\sigma\Vert_{V_\sigma^{n,q}(\gdom)}} =
\sup_{v_\sigma\in V_\sigma^{n,q}(\gdom)} \frac{\langle\cdot, v_\sigma\rangle}{\Vert v_\sigma\Vert_{L^q(\gdom)}}.
\end{equation}
\end{definition}

For a fixed bounded domain $\gdom$ and fixed values of $\sigma>0$ and
$n\in\mathbb{N}$, these spaces of course all have the same topology and are in
this sense independent of $p$. In the next sections this notation will prove to
be useful, in the other cases the index $p$ will be omitted. It is well-known
that for $n\in\mathbb{N}$, $1\leq p\leq\infty$, one has $V_\sigma^{n}(\gdom)
\subset W^{n-1,p}(\gdom)$. For $p\neq\infty$ this can be improved to $V_\sigma^n
(\gdom)\subset B_{\infty}^{\hat{s},p}(\gdom)$, $\hat{s}:= n - 1 + \tfrac{1}{p} =
n - \tfrac{1}{q}$, and furthermore $V_\sigma^n(\gdom) \subset W^{s,p}(\gdom)$ for
all $0\leq s <\hat{s}$.~\cite{devore1988,devore1993} In particular, the spaces
$V_\sigma^n(\gdom)$ are not, but \enquote{almost} are embedded in $W^{n,1}(\gdom)$.

\subsection{Some Properties of Splines}We now introduce some important basic
properties of the spline spaces, of which our analysis will make frequent use.
For proofs of these result, we refer to Schumaker’s book.~\cite{schumaker2007}
The B-splines form a particularly useful basis for the spaces $V_\sigma^n(\gdom)$.
\begin{definition}[B-Splines]
The cardinal B-splines $b^n:\reals\to\reals$, $n\in\mathbb{N}$, are
defined recursively via:
\begin{equation}
b^1(x) := \begin{cases} 1 & x\in[0,1), \\ 0 & \text{else}, \end{cases}\quad 
b^n(x) := \bigl(b^{n-1}\star b^1\bigr)(x) = \idx{0}{1}{b^{n-1}(x-y)}{y}.
\end{equation}
Reusing the symbol $b^n$, the corresponding multivariate B-splines are
defined coordinate-wise as $b^n(\vv{x}):=\prod_{d=1}^{\spdim}b^n(x_d)$. For a
given $\sigma>0$, with each Cartesian grid point $\vv{x_i}$, $\vv{i}\in
\mathbb{Z}^\spdim$, we associate the shifted and scaled B-spline
$b_{\sigma,\vv{i}}^n(\vv{x}):= b^n(\tfrac{\vv{x}-\vv{x}_i}{\sigma})$. For a
given domain $\gdom\subset\wholespace$ the corresponding index set is defined
as:
\begin{equation}\label{eqn:indexxset}
\Lambda_\sigma^n(\gdom) := \bigl\lbrace\vv{i}\in\mathbb{Z}^\spdim\ \big| 
\meas_{\spdim}\bigl(\supp{b_{\sigma,\vv{i}}^n}\cap\gdom\bigr)>0\bigr\rbrace.
\end{equation}
\end{definition}

This basis has many desirable properties, among which are the smallest possible
support of its members $\supp b_{\sigma,\vv{i}}^{n} = \prod_{d=1}^{\spdim}[i_d\sigma,
(i_d + n)\sigma]$, their positivity $0\leq b_{\sigma,\gv{\lambda}}^n \leq 1$,
the fact that they form a partition of unity $\sum_{\vv{i}\in\mathbb{Z}^\spdim}
b_{\sigma,\vv{i}}^n \equiv 1$, and most importantly the norm equivalence that
follows.

In what follows, the symbol $\qdom$ will refer to an arbitrary finite collection
of entire cubes from the Cartesian grid, e.\,g., the domains $\Omega_\sigma$,
$\Omega_\sigma^\circ$, or $\Omega_\sigma^\Gamma$. On such domains one can show
some very useful properties, of which we will make frequent use. For proofs of
these results and further references we refer the reader to Schumaker's book.
\cite{schumaker2007}

\begin{lemma}[Stability of the B-Spline Basis]
Let $\qdom\subset\wholespace$ be a finite collection of entire, uncut cubes
$Q_\vv{i}$ from the Cartesian grid of size $\sigma>0$. Then every function
$v_\sigma\in V_\sigma^{n}(\qdom)$, $n\in\mathbb{N}$, can be written as
\begin{equation}
u_\sigma = \sum_{\gv{\lambda}\in\Lambda_{\sigma}^n(\qdom)}
\mathsf{u}_{\sigma,\gv{\lambda}}b_{\sigma,\gv{\lambda}}^n,
\end{equation}
with a uniquely determined coefficient vector $\mathsf{u}_{\sigma} =
\bigl(\mathsf{u}_{\sigma,\gv{\lambda}}\bigr)_{%
\gv{\lambda}\in\Lambda_{\sigma}^n(\qdom)}\in \ell^p(\Lambda_{\sigma}^n(\qdom)) =
\reals^{\dim V_\sigma^{n,p}(\qdom)}$. The $L^p$- and $\ell^p$-norms of respectively
$u_\sigma$ and $\sv{u}_\sigma$ are equivalent for $1\leq p\leq \infty$:
\begin{equation}\label{eqn:Bstab}
\sigma^{\frac{\spdim}{p}}\Vert\sv{u}_\sigma\Vert_{\ell^p} \lesssim \Vert u_\sigma\Vert_{L^p(\qdom)} \lesssim
\sigma^{\frac{\spdim}{p}}\Vert\sv{u}_\sigma\Vert_{\ell^p}.
\end{equation}
\end{lemma}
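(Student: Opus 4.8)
The plan is to reduce the statement to a norm equivalence on a single reference cell and then to assemble the global bounds cell by cell, exploiting that the B-spline supports overlap only a bounded number of times. Existence of the representation is standard cardinal-spline theory: every global spline in $V_\sigma^n(\wholespace)$ admits a locally finite B-spline expansion~\cite{schumaker2007}, and restricting to $\qdom$ discards exactly the indices outside $\Lambda_\sigma^n(\qdom)$. Uniqueness, and with it the claim that the restricted B-splines form a basis, will follow from the local linear independence of the B-spline pieces: on any single cell $Q_\vv{j}\subset\qdom$ only the $n^\spdim$ splines whose support contains $Q_\vv{j}$ are nonzero, and their restrictions to $Q_\vv{j}$ are linearly independent elements of $\mathbb{Q}_{n-1}$~\cite{schumaker2007}. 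Since $\qdom$ and the supports are unions of entire cells, every $\gv{\lambda}\in\Lambda_\sigma^n(\qdom)$ has at least one full cell of its support lying in $\qdom$; a vanishing combination on $\qdom$ therefore forces all coefficients to vanish cell by cell.

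For the quantitative estimate I would first work on one cell. Write $N(\vv{j}):=\{\gv{\lambda}:\supp b_{\sigma,\gv{\lambda}}^n\supset Q_\vv{j}\}$ for the $n^\spdim$ active indices. Because $\dim\mathbb{Q}_{n-1}=n^\spdim$ matches $\card N(\vv{j})$, the map sending the local coefficient vector to $u_\sigma|_{Q_\vv{j}}$ is a linear isomorphism, so on this fixed finite-dimensional space the $L^p(Q_\vv{j})$-norm of $u_\sigma$ and the $\ell^p$-norm of its local coefficients are equivalent. Translation invariance of the cardinal B-splines makes the equivalence constants independent of $\vv{j}$, and the change of variables $\vv{x}\mapsto(\vv{x}-\vv{x}_\vv{j})/\sigma$ exhibits the scaling:
\begin{equation}
\Vert u_\sigma\Vert_{L^p(Q_\vv{j})}\lesssim
\sigma^{\frac{\spdim}{p}}\bigl\Vert(\sv{u}_{\sigma,\gv{\lambda}})_{\gv{\lambda}\in N(\vv{j})}\bigr\Vert_{\ell^p}
\lesssim\Vert u_\sigma\Vert_{L^p(Q_\vv{j})},
\end{equation}
with constants depending only on $n$, $\spdim$, and $p$; for $p=\infty$ the $\ell^p$-norms become maxima and the argument is unchanged.

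Finally I would sum over cells. Raising the first of the two local inequalities to the $p$-th power and summing over all $Q_\vv{j}\subset\qdom$ gives $\Vert u_\sigma\Vert_{L^p(\qdom)}^p\lesssim\sigma^\spdim\sum_{\gv{\lambda}}|\sv{u}_{\sigma,\gv{\lambda}}|^p$, since each index is active on at most $n^\spdim$ cells; this is the upper bound in~\eqref{eqn:Bstab}. For the lower bound I would sum the second local inequality and use that each $\gv{\lambda}\in\Lambda_\sigma^n(\qdom)$ is active on at least one full cell in $\qdom$, so every coefficient is counted at least once on the left and $\sigma^\spdim\sum_{\gv{\lambda}}|\sv{u}_{\sigma,\gv{\lambda}}|^p\lesssim\Vert u_\sigma\Vert_{L^p(\qdom)}^p$ follows; taking $p$-th roots yields~\eqref{eqn:Bstab}. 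The only genuinely nontrivial input is the local linear independence underpinning the lower bound, which guarantees an invertible, cell-uniform coefficient map; once this is granted, the scaling, translation invariance, and bounded-overlap summation are routine.
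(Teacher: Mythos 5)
Your proof is correct, but note that the paper itself never proves this lemma: it defers entirely to Schumaker's book, so there is no in-paper argument to compare against. The classical proof in that reference obtains the nontrivial (lower) bound via dual functionals of de Boor--Fix type, i.e.\ explicit linear functionals biorthogonal to the B-splines whose norms scale like $\sigma^{-\spdim/p}$ against a local $L^p$-norm; you instead get it from local linear independence on a single cell, the dimension count $\card N(\vv{j}) = n^\spdim = \dim\mathbb{Q}_{n-1}$ making the local coefficient map an isomorphism, finite-dimensional norm equivalence on the reference cell, and scaling. The two routes carry the same content, and yours is arguably more self-contained and elementary, though it still quotes local linear independence from the same source where the dual-functional proof lives. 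Your bookkeeping is sound precisely at the two places where the argument could silently fail: first, because $\qdom$ and all B-spline supports are unions of entire cells, every $\gv{\lambda}\in\Lambda_\sigma^n(\qdom)$ has at least one full support cell inside $\qdom$, so every coefficient is counted at least once in the lower-bound summation (this is where the hypothesis that $\qdom$ consists of uncut cubes is genuinely used, and the lemma would be false without it); second, each index is active on at most $n^\spdim$ cells, which gives the bounded-overlap constant in the upper bound. One minor remark: surjectivity of the local coefficient map is never actually needed --- injectivity alone gives the local lower bound, and the upper bound follows from the triangle inequality and $0\leq b^n_{\sigma,\gv{\lambda}}\leq 1$ --- so you could state the local step with slightly less machinery.
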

\begin{lemma}[Inverse Estimates]\label{thm:inverseineq}
Let $u_\sigma\in V_\sigma^{n}(\qdom)$, $n\in\mathbb{N}$. On every element $Q\in\qdom$
and for all $1\leq p_1, p_2\leq\infty$, $0\leq r\leq s$, the following local inequality
holds:
\begin{equation}\label{eqn:localinverseineq}
\Vert u_\sigma\Vert_{W^{s,p_1}(Q)} \lesssim
\sigma^{\frac{\spdim}{p_1}-\frac{\spdim}{p_2}}
\sigma^{r-s}
\Vert u_\sigma\Vert_{W^{r,p_2}(Q)}.
\end{equation}
Globally one has for all $1\leq p_1,p_2\leq\infty$, $0\leq s < n - 1 +
\min\lbrace\tfrac{1}{p_1},\tfrac{1}{p_2}\rbrace$ or $s=n-1$:
\begin{equation}\label{eqn:globalinverseineqpq}
\Vert u_\sigma\Vert_{W^{s,p_1}(\qdom)} \lesssim
\sigma^{\min\lbrace 0,\frac{\spdim}{p_1}-\frac{\spdim}{p_2}\rbrace}
\Vert u_\sigma\Vert_{W^{s,p_2}(\qdom)},
\end{equation}
and for all $1\leq p\leq\infty$, $0\leq r\leq s < n - 1 + \tfrac{1}{p}$ or
$s = n -1$:
\begin{equation}\label{eqn:globalinverseineqrs}
\Vert u_\sigma\Vert_{W^{s,p}(\qdom)} \lesssim
\sigma^{r-s}
\Vert u_\sigma\Vert_{W^{r,p}(\qdom)}.
\end{equation}
\end{lemma}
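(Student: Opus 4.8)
The plan is to establish the \emph{local} estimate \eqref{eqn:localinverseineq} first, by a dilation argument on a single reference cell, and then to deduce the two \emph{global} estimates \eqref{eqn:globalinverseineqpq} and \eqref{eqn:globalinverseineqrs} by summing the local one over the elements $Q\in\qdom$. Throughout I work under the standing assumption $\sigma\le\sigma_0$ for a fixed $\sigma_0$, so that powers of $\sigma$ can be ordered monotonically; the hidden constants then depend only on $n$, $\spdim$, the orders $r,s$, the exponents $p_1,p_2$, and $\sigma_0$. The essential mechanism is that a \emph{homogeneous} seminorm transforms cleanly under the affine map $\vv{x}=\vv{x}_\vv{i}+\sigma\hat{\vv{x}}$ from the unit cube $\hat Q=[0,1]^\spdim$ onto $Q=Q_\vv{i}$: writing $\hat u(\hat{\vv{x}}):=u_\sigma(\vv{x}_\vv{i}+\sigma\hat{\vv{x}})$, a change of variables gives $|u_\sigma|_{W^{k,p}(Q)}=\sigma^{\spdim/p-k}|\hat u|_{W^{k,p}(\hat Q)}$, and the Gagliardo seminorm obeys the same relation with $k$ replaced by a fractional order. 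Since $u_\sigma|_Q$ is a polynomial in the \emph{fixed, finite-dimensional} space $\mathbb{Q}_{n-1}$, all seminorms on $\hat Q$ are mutually comparable; note that this also means the local estimate carries no restriction on $s$.

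For the local bound I reduce the left-hand full norm to $\max_{0\le k\le s}|u_\sigma|_{W^{k,p_1}(Q)}$ (a finite sum) and bound each seminorm by $\sigma^{\spdim/p_1-\spdim/p_2+r-s}\Vert u_\sigma\Vert_{W^{r,p_2}(Q)}$ through a case split at $k=r$. If $k\ge r$, the kernel of $|\cdot|_{W^{r,p_2}(\hat Q)}$ (polynomials of total degree below $r$) sits inside that of $|\cdot|_{W^{k,p_1}(\hat Q)}$, so finite-dimensional seminorm comparison yields $|\hat u|_{W^{k,p_1}(\hat Q)}\lesssim|\hat u|_{W^{r,p_2}(\hat Q)}$; scaling both sides back produces the exponent $\spdim/p_1-\spdim/p_2+r-k\ge\spdim/p_1-\spdim/p_2+r-s$. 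If $k<r$, I instead compare at equal differentiation order, $|\hat u|_{W^{k,p_1}(\hat Q)}\lesssim|\hat u|_{W^{k,p_2}(\hat Q)}$, scale back to the exponent $\spdim/p_1-\spdim/p_2$, and absorb $|u_\sigma|_{W^{k,p_2}(Q)}\le\Vert u_\sigma\Vert_{W^{r,p_2}(Q)}$; because $r\le s$, this exponent again dominates the target. Using $\sigma\le\sigma_0$ to pass to the worst exponent and maximizing over $k$ finishes the integer case, and the fractional case is identical once the integer seminorms are replaced by Gagliardo seminorms, which scale by the very same power $\sigma^{\spdim/p-s}$.

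For the global estimates I raise the local inequality to a power and sum. Estimate \eqref{eqn:globalinverseineqrs} (equal exponents $p_1=p_2=p$) is immediate: for integer $s$ and $p<\infty$ the norm $\Vert u_\sigma\Vert_{W^{s,p}(\qdom)}^p$ is exactly the $\ell^p$-sum of the elementwise norms---the $(n-1)$-st derivative of the $C^{n-2}$-spline merely jumps across faces and remains in $L^p$, so $u_\sigma\in W^{n-1,p}(\qdom)$ and no interface contributions arise---whence the local bound gives $\Vert u_\sigma\Vert_{W^{s,p}(\qdom)}\lesssim\sigma^{r-s}\Vert u_\sigma\Vert_{W^{r,p}(\qdom)}$ directly; $p=\infty$ uses the maximum and fractional $s$ uses that Besov norms likewise add up from local contributions (\cref{sec:spaces}). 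For \eqref{eqn:globalinverseineqpq} I split by sign of the exponent. When $p_1\ge p_2$ the exponent equals $\spdim/p_1-\spdim/p_2\le0$, and summing the local estimate and invoking $\Vert\cdot\Vert_{\ell^{p_1}}\le\Vert\cdot\Vert_{\ell^{p_2}}$ on the sequence of elementwise $W^{s,p_2}$-norms gives the claim; this is the genuine inverse-estimate direction. When $p_1<p_2$ the claimed exponent is $0$ and no spline structure is needed at all: Hölder's inequality on the bounded set $\qdom$ yields $\Vert\partial^\alpha u_\sigma\Vert_{L^{p_1}(\qdom)}\lesssim\Vert\partial^\alpha u_\sigma\Vert_{L^{p_2}(\qdom)}$ for each $\alpha$, and the finitely many multi-indices recombine through the equivalence of $\ell^{p_1}$- and $\ell^{p_2}$-norms on a space of dimension depending only on $\spdim$ and $s$. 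The hypothesis $s<n-1+\min\{1/p_1,1/p_2\}$ (or $s=n-1$) is precisely what places $u_\sigma$ in both $W^{s,p_1}(\qdom)$ and $W^{s,p_2}(\qdom)$, keeping every global norm finite.

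I expect the real difficulty to be the \emph{sharpness} of the local exponent. A lazy comparison of full norms on $\hat Q$ would scale the right-hand side back through its $L^{p_2}$-part and lose a factor $\sigma^{-r}$, destroying the gain $\sigma^{r-s}$; recovering it forces the seminorm-by-seminorm treatment with the split at $k=r$ and the homogeneous scaling of each individual seminorm. The second delicate point is the fractional range just below the regularity threshold $n-1+1/p$: since $s=n$ is excluded---the spline leaves $W^{n,p}(\qdom)$ globally because its $(n-1)$-st derivative jumps---one cannot interpolate against an integer endpoint above $n-1$, so the global fractional estimates must instead be read off from the moduli-of-smoothness description of the Besov spaces, whose local-to-global additivity was recalled in \cref{sec:spaces}.
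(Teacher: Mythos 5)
The paper itself does not prove this lemma at all---it refers the reader to Schumaker's book---so there is no in-paper argument to compare against; judged on its own terms, your proof follows the classical route and its core is sound. For the local estimate \eqref{eqn:localinverseineq}, the affine map to the reference cube, the homogeneous scaling $|u_\sigma|_{W^{k,p}(Q)}=\sigma^{\spdim/p-k}|\hat{u}|_{W^{k,p}(\hat{Q})}$ of each individual seminorm, and the comparison of seminorms on the fixed finite-dimensional space $\mathbb{Q}_{n-1}$ via kernel inclusion (polynomials of total degree below $r$ lie in the kernel of every order-$k$ seminorm when $k\geq r$) is exactly the textbook argument; your split at $k=r$ is what preserves the factor $\sigma^{r-s}$, and you are right that the local bound carries no upper restriction on $s$. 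The global integer-order estimates, obtained by summing $p$-th powers over cells (broken and global norms coincide for $s\leq n-1$, and $\Vert\cdot\Vert_{\ell^{p_1}}\leq\Vert\cdot\Vert_{\ell^{p_2}}$ when $p_1\geq p_2$), are likewise correct.

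Two caveats. First, the fractional-order global bounds are the only genuinely nontrivial content of the lemma, and there your proof is a citation rather than an argument: what is needed is the equivalence, with constants \emph{uniform in $\sigma$}, of $\Vert u_\sigma\Vert_{W^{s,p}(\qdom)}^p$ with $\sum_{Q}\Vert u_\sigma\Vert_{W^{s,p}(Q)}^p$ for splines in the range $s<n-1+\tfrac{1}{p}$ --- a DeVore--Popov-type theorem of which only one direction (sum of local contributions bounded by the global norm) is trivial; mere membership $u_\sigma\in W^{s,p}(\qdom)$, which is how you motivate the hypothesis on $s$, is not enough. Since the paper handles this lemma by citation too, I do not count this as a fatal gap, but you should name the equivalence explicitly as the black box you invoke. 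Second, and this is an actual error: your claim that for $p_1<p_2$ \enquote{no spline structure is needed at all} holds only for integer $s$. For fractional $s$, H\"older applied to the Gagliardo seminorm produces the divergent factor $\iint_{\qdom\times\qdom}|\vv{x}-\vv{y}|^{-\spdim}\,\dd\vv{x}\,\dd\vv{y}$, and the interpolation alternative stalls at a secondary-index mismatch: interpolating the embeddings of integer order gives $W^{s,p_2}=B^{s,p_2}_{p_2}\hookrightarrow B^{s,p_1}_{p_2}$, which \emph{contains} rather than is contained in $W^{s,p_1}=B^{s,p_1}_{p_1}$. The repair does use the spline structure: apply the local-sum equivalence on both sides, the local estimate \eqref{eqn:localinverseineq} with $r=s$, and H\"older over the $\mathcal{O}(\sigma^{-\spdim})$ cells of $\qdom$; the resulting cell-count factor $\sigma^{-\spdim(1/p_1-1/p_2)}$ cancels the factor $\sigma^{\spdim/p_1-\spdim/p_2}$ coming from the local estimate exactly, which is precisely why the claimed constant in \eqref{eqn:globalinverseineqpq} is $\sigma^0$ in this case.
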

\begin{lemma}[Quasi-interpolator]\label{thm:quasiinterpolant}
For every $n\in\mathbb{N}$ there exists a projection operator $P_\sigma^n:
L^1(\qdom)\to V_\sigma^n(\qdom)$ called the \emph{quasi-interpolator}. For
all $u\in W^{k,p}(\qdom)$, $k\in\mathbb{N}_0$, $0\leq k\leq n$ this operator
fulfills:
\begin{align}\label{eqn:Pconsistency}
\Vert u - P_\sigma^n u\Vert_{W^{l,p}(Q)}
& \lesssim 
\sigma^{k-l}\vert u\vert_{W^{k,p}(\hat{Q})} & l\in\lbrace 0,\ldots,k\rbrace, Q\in\qdom,\\
\vert P_\sigma^n u\vert_{W^{l,p}(Q)}
& \lesssim
\sigma^{k-l}\vert u\vert_{W^{k,p}(\hat{Q})} & l\in\lbrace k,\ldots,n\rbrace, Q\in\qdom,
\label{eqn:Pcontinuity}
\end{align}
where $\hat{Q}=\qdom\cap\bigcup_{\gv{\lambda}\in\Lambda_\sigma^n(Q)}
\supp b_{\gv{\lambda},\sigma}^n$ is the union of the supports of all the
B-splines that do not vanish on $Q$. Moreover, for arbitrary
$0\leq r < s\leq n$, $1\leq p_1^\prime, p_2^\prime\leq\infty$ it holds
that:
\begin{equation}
\Vert u - P_\sigma^n u\Vert_{B^{r,p}_{p_1^\prime}(Q)}
\lesssim 
\sigma^{s-r}\Vert u\Vert_{B^{s,p}_{p_2^\prime}(\hat{Q})}.
\end{equation}
\end{lemma}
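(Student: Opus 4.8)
The plan is to realize $P_\sigma^n$ through a biorthogonal dual system and then read off every estimate from locality, scaling, and the Bramble--Hilbert lemma. First I would reduce to the reference scale $\sigma=1$ by the affine change of variables $\vv{x}\mapsto\vv{x}/\sigma$, under which the shifted B-splines $b_{\sigma,\gv{\lambda}}^n$ become the unit-scale splines on the integer grid and the $W^{l,p}$-seminorms rescale by fixed powers of $\sigma$ that combine to exactly the net factors $\sigma^{k-l}$ stated in the claim. It therefore suffices to prove the unit-scale statements and reinstate the scaling at the very end.

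For the construction I would use the standard fact (see Schumaker) that on the reference grid there is a system of local dual functionals $\lambda_{\gv{\lambda}}$, each represented by integration against a bounded dual function $\psi_{\gv{\lambda}}\in L^\infty$ supported within a fixed number of cells around $\supp b_{\gv{\lambda}}^n$, that is biorthogonal to the B-splines, $\lambda_{\gv{\lambda}}(b_{\gv{\mu}}^n)=\delta_{\gv{\lambda}\gv{\mu}}$, and that reproduces $\mathbb{Q}_{n-1}$. Near $\partial\qdom$ the functionals are anchored at points lying inside $\qdom$ so that they depend only on data in $\qdom$; this is precisely why $\hat{Q}$ is intersected with $\qdom$. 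I then set $P_\sigma^n u:=\sum_{\gv{\lambda}\in\Lambda_\sigma^n(\qdom)}\lambda_{\gv{\lambda}}(u)\,b_{\sigma,\gv{\lambda}}^n$. Since each $\psi_{\gv{\lambda}}$ lies in $L^\infty$, the operator is well defined on $L^1(\qdom)$, and biorthogonality makes it a projection onto $V_\sigma^n(\qdom)$. Because $\mathbb{Q}_{n-1}$ is contained in $V_\sigma^n$ cell-by-cell and $P_\sigma^n u|_Q$ depends only on $u|_{\hat{Q}}$, one obtains the key local polynomial reproduction: if $u$ agrees with some $\pi\in\mathbb{Q}_{n-1}$ on $\hat{Q}$, then $P_\sigma^n u=\pi$ on $Q$.

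For the estimates, local $L^p$-stability $\Vert P_\sigma^n u\Vert_{L^p(Q)}\lesssim\Vert u\Vert_{L^p(\hat{Q})}$ follows by combining the coefficient bound $|\lambda_{\gv{\lambda}}(u)|\lesssim\sigma^{-\spdim/p}\Vert u\Vert_{L^p(\hat{Q})}$ (from $\Vert\psi_{\sigma,\gv{\lambda}}\Vert_{L^\infty}\lesssim\sigma^{-\spdim}$ and H\"older) with the norm equivalence \eqref{eqn:Bstab}. Higher seminorms are reached via the local inverse estimate \eqref{eqn:localinverseineq}, giving $|P_\sigma^n u|_{W^{l,p}(Q)}\lesssim\sigma^{-l}\Vert u\Vert_{L^p(\hat{Q})}$ for $0\leq l\leq n$. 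For \eqref{eqn:Pconsistency} I would, on each $Q$, subtract an averaged Taylor polynomial $\pi\in\mathbb{Q}_{k-1}$ of $u$ over $\hat{Q}$ and use reproduction to write $u-P_\sigma^n u=(u-\pi)-P_\sigma^n(u-\pi)$. The first term is controlled by Bramble--Hilbert, $\Vert u-\pi\Vert_{W^{l,p}(\hat{Q})}\lesssim\sigma^{k-l}|u|_{W^{k,p}(\hat{Q})}$; the second by stability followed by the inverse estimate, $|P_\sigma^n(u-\pi)|_{W^{l,p}(Q)}\lesssim\sigma^{-l}\Vert u-\pi\Vert_{L^p(\hat{Q})}\lesssim\sigma^{k-l}|u|_{W^{k,p}(\hat{Q})}$. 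The continuity bound \eqref{eqn:Pcontinuity} for $l\geq k$ is the same computation, noting that $|\pi|_{W^{l,p}(Q)}=0$ since $\deg\pi=k-1<l$, so that $|P_\sigma^n u|_{W^{l,p}(Q)}=|P_\sigma^n(u-\pi)|_{W^{l,p}(Q)}$.

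The fractional-order (Besov) estimate then follows by real interpolation: with the two integer endpoint mappings $I-P_\sigma^n:L^p(\hat{Q})\to L^p(Q)$ of norm $\lesssim 1$ and $I-P_\sigma^n:W^{s,p}(\hat{Q})\to L^p(Q)$ (together with the higher-order endpoints) carrying the stated powers of $\sigma$, I would apply the functor $[\cdot,\cdot]_{\theta,p'}$ defining the Besov spaces, using the embeddings $B_1^{s,p}(\gdom)\hookrightarrow W^{s,p}(\gdom)\hookrightarrow B_\infty^{s,p}(\gdom)$ recorded above to absorb the freedom in the secondary indices $p_1',p_2'$. The main obstacle I anticipate is the construction and bookkeeping near $\partial\qdom$: guaranteeing that a local dual basis with the required biorthogonality and full $\mathbb{Q}_{n-1}$-reproduction still exists when the B-spline supports are truncated by $\qdom$, and that its dual functions retain the $\sigma^{-\spdim}$ scaling uniformly in $\sigma$ and in the cell. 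Away from the boundary this is routine tensor-product bookkeeping; it is in the boundary cells, where the anchoring of the local dual functionals inside $\qdom$ must be verified, that the argument requires care.
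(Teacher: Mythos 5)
The paper does not prove this lemma itself; it defers to Schumaker's book, and your proposal reconstructs precisely that standard argument (local dual functionals represented by $L^\infty$ functions, biorthogonality and local polynomial reproduction, Bramble--Hilbert plus stability and inverse estimates, then real interpolation for the Besov case), so in substance you are following the same route as the cited source. One caveat: in the paper's notation $\mathbb{Q}_{k-1}$ denotes \emph{coordinate-wise} degree $k-1$, and for such polynomials your claim that $\vert\pi\vert_{W^{l,p}(Q)}=0$ for $l\geq k$ is false in dimension $\spdim\geq 2$ (e.g.\ $\pi=x_1x_2\in\mathbb{Q}_1$ has $\pd_1\pd_2\pi\equiv 1$, a nonvanishing derivative of total order $2$); the argument works because the averaged Taylor polynomial you actually use lies in the \emph{total-degree} space $\mathbb{P}_{k-1}$, whose seminorms of order $l\geq k$ do vanish, and $\mathbb{P}_{k-1}\subset\mathbb{Q}_{n-1}$ so the reproduction step still applies --- you should state $\pi\in\mathbb{P}_{k-1}$, not $\pi\in\mathbb{Q}_{k-1}$. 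You are also right that the genuine delicacy is the boundary bookkeeping: each B-spline in $\Lambda_\sigma^n(\qdom)$ contains at least one entire cell of $\qdom$ in its support (since $\qdom$ is a union of whole cells), so the dual functional can be anchored there, which is what keeps the construction local to $\hat{Q}$ and the constants uniform in $\sigma$ and in the cell.
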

\begin{remark}
This result can be improved in the sense that the right hand side of the above
inequality \eqref{eqn:Pconsistency} only needs to involve \enquote{pure}
derivatives in the coordinate directions.~\cite{dahmen1980} We will not be able
to use this fact, however, because our analysis will rely on the Stein extension
theorem, which is formulated for the usual Sobolev spaces involving mixed
derivatives.
\end{remark}

On domains $\qdom$ the $L^2(\qdom)$-projection is bounded as an operator from
$L^p(\qdom)\to V_\sigma^{n,p}(\qdom)$, $1\leq p\leq\infty$.~\cite{douglas1974,
crouzeix1987} From this fact and the stability of the B-spline basis it is an
easy task to derive the following lemma.
\begin{lemma}\label{thm:equivalence}
Every functional $f\in V_\sigma^{-n,p}(\qdom)$, $1\leq p\leq\infty$, has a unique
representative $f_\sigma\in V_\sigma^{n,p}(\qdom)$ such that:
\begin{equation}
\langle f,v_\sigma\rangle = \idx{\qdom}{}{f_\sigma v_\sigma}{x} \qquad
\forall v_\sigma\in V_\sigma^{n,q}(\qdom).
\end{equation}
The norms of $f$ and $f_\sigma$ are equivalent:
\begin{equation}
\Vert f_\sigma\Vert_{L^p(\qdom)}
\lesssim
\Vert f\Vert_{V_\sigma^{-n,p}(\qdom)}
\leq
\Vert f_\sigma\Vert_{L^p(\qdom)}.
\end{equation}
\end{lemma}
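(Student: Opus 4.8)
The plan is to reduce existence and uniqueness to finite-dimensional linear algebra and to extract the norm equivalence from the $\sigma$-uniform $L^p$-stability of the $L^2(\qdom)$-projection $\Pi_\sigma$ onto $V_\sigma^n(\qdom)$ recalled just above. Because $\qdom$ is a finite union of cells, the B-spline stability lemma exhibits a finite basis of $V_\sigma^n(\qdom)$, so the bilinear form $(u_\sigma,v_\sigma)\mapsto\idx{\qdom}{}{u_\sigma v_\sigma}{x}$ has the symmetric positive definite B-spline mass matrix as its matrix and is in particular nondegenerate, since it reduces to $\Vert u_\sigma\Vert_{L^2(\qdom)}^2>0$ on the diagonal for $u_\sigma\neq 0$. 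The induced linear map $f_\sigma\mapsto\idx{\qdom}{}{f_\sigma\,\cdot\,}{x}$ is therefore a bijection of $V_\sigma^n(\qdom)$ onto its dual $V_\sigma^{-n,p}(\qdom)$---the algebraic and topological duals coinciding in finite dimensions---which furnishes the unique representative $f_\sigma$ with $\langle f,v_\sigma\rangle=\idx{\qdom}{}{f_\sigma v_\sigma}{x}$ for every $v_\sigma\in V_\sigma^{n,q}(\qdom)$.

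The upper inequality is then immediate from Hölder: inserting the representation into the definition of the dual norm gives $\langle f,v_\sigma\rangle=\idx{\qdom}{}{f_\sigma v_\sigma}{x}\leq\Vert f_\sigma\Vert_{L^p(\qdom)}\Vert v_\sigma\Vert_{L^q(\qdom)}$, and dividing by $\Vert v_\sigma\Vert_{L^q(\qdom)}$ and taking the supremum yields $\Vert f\Vert_{V_\sigma^{-n,p}(\qdom)}\leq\Vert f_\sigma\Vert_{L^p(\qdom)}$ with constant one. For the lower inequality I would dualize the $L^p$-norm and test $f_\sigma$ against an arbitrary $g\in L^q(\qdom)$. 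Writing $g=\Pi_\sigma g+(g-\Pi_\sigma g)$ and using that $f_\sigma\in V_\sigma^n(\qdom)$ is $L^2(\qdom)$-orthogonal to the projection error $g-\Pi_\sigma g$, the error term drops out, so that $\idx{\qdom}{}{f_\sigma g}{x}=\idx{\qdom}{}{f_\sigma\Pi_\sigma g}{x}=\langle f,\Pi_\sigma g\rangle\leq\Vert f\Vert_{V_\sigma^{-n,p}(\qdom)}\Vert\Pi_\sigma g\Vert_{L^q(\qdom)}\lesssim\Vert f\Vert_{V_\sigma^{-n,p}(\qdom)}\Vert g\Vert_{L^q(\qdom)}$, where the final step is exactly the $L^q$-boundedness of $\Pi_\sigma$ and the identity $\Vert\Pi_\sigma g\Vert_{V_\sigma^{n,q}(\qdom)}=\Vert\Pi_\sigma g\Vert_{L^q(\qdom)}$. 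Taking the supremum over $g$ closes the estimate $\Vert f_\sigma\Vert_{L^p(\qdom)}\lesssim\Vert f\Vert_{V_\sigma^{-n,p}(\qdom)}$.

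The single non-elementary ingredient, and the step I expect to carry the real weight, is the $\sigma$-uniform $L^q$-boundedness of the $L^2$-projection onto the spline space; once it is invoked as stated, the remainder is linear algebra, Hölder's inequality, and the orthogonality of the projection error. The only point requiring a little care is the endpoint $p=\infty$, hence $q=1$, where the $L^p$--$L^q$ duality representation $\Vert f_\sigma\Vert_{L^\infty(\qdom)}=\sup_g\idx{\qdom}{}{f_\sigma g}{x}/\Vert g\Vert_{L^1(\qdom)}$ must be justified; this is harmless, since $\qdom$ has finite measure and $f_\sigma$ is a continuous spline.
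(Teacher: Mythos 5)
Your proof is correct and is essentially the argument the paper intends: the paper gives no written proof, stating only that the lemma follows easily from the $L^p(\qdom)$-boundedness of the $L^2(\qdom)$-projection together with the stability of the B-spline basis. Your derivation---nondegeneracy of the finite-dimensional mass matrix for existence and uniqueness, H\"older's inequality for the upper bound, and the duality argument with the uniformly bounded projection $\Pi_\sigma$ for the lower bound---is exactly that derivation spelled out, including the correct handling of the endpoint cases $p=1$ and $p=\infty$.
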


\section{Particle Initialization}\label{sec:initialization}
In this section we discuss how to construct particle approximations of spline
functions $\tilde{u}_h\in V_h^{n,p}(\Omega)$. A particle approximation of
general functions $u\in L^p(\Omega)$ can then be obtained by setting $\tilde{u}_h$
to a suitable approximation of $u$.

\subsection{Particle Approximations of Splines}\label{subsec:construction}
As before, let $\Omega\subset\wholespace$ denote an open, bounded Lipschitz
domain and let $\tilde{u}_h\in V_h^n(\Omega)$, $h>0$, $n>\spdim$, denote the
spline we want to approximate by a particle field. The condition $n>\spdim$
ensures that $V_h^n(\Omega)\subset W^{\spdim,p}(\Omega)$ for $1\leq p\leq\infty$,
which will simplify the analysis. It is likely that similar results can be
obtained for smaller choices of $n$ at the cost of a more technical analysis,
but we see no clear benefit from this. For each $\gv{\lambda}\in\Lambda_h^n(\Omega)$
we chose quadrature nodes $\vv{x}_{\gv{\lambda},i}\in\Omega\cap\supp b^n_{%
h,\gv{\lambda}}$ and associated weights $w_{i,\gv{\lambda}}\in\reals$, $i=1,\ldots,N_{%
\gv{\lambda}}$, such that:
\begin{eqnarray}
\sum_{i=1}^{N_{\gv{\lambda}}}|w_{i,\gv{\lambda}}|&\leq& C_{\mathrm{Stab}}(nh)^\spdim, \\
\sum_{i=1}^{N_{\gv{\lambda}}}w_{i,\gv{\lambda}}%
b^n_{h,\gv{\lambda}}(\vv{x}_{\gv{\lambda},i})
b^n_{h,\gv{\mu}}(\vv{x}_{\gv{\lambda},i}) &=&
\idx{\Omega}{}{b^n_{h,\gv{\lambda}}b^n_{h,\gv{\mu}}}{\vv{x}}\qquad
\forall\gv{\mu}\in\Lambda_h^n(\Omega),
\end{eqnarray}
where $C_{\mathrm{Stab}}\geq 1$ is a user-defined stability constraint.

All that is required to construct such quadrature rules is a mesh of the boundary.
For B-splines $b^n_{h,\gv{\lambda}}$ whose support entirely lies in $\Omega$ one
can choose standard Gau{\ss}--Legendre quadrature rules on each cell $Q\in\supp
b^n_{h,\gv{\lambda}}\subset\Omega$. For B-splines with cut support the main 
difficulty is to compute the integrals on the right for each $\gv{\mu}$ with
$\supp b^n_{h,\gv{\lambda}}\cap \supp b^n_{h,\gv{\mu}} \cap\Omega\neq\emptyset$.
This can be done using the boundary mesh: note that the product
$b^n_{h,\gv{\lambda}}b^n_{h,\gv{\mu}}$ is again of the form $\prod_{d=1}^{\spdim}
P_d(x_d)$, where the $P_d$ are certain one-dimensional, piecewise polynomials with
global smoothness $W^{n-1,\infty}$. The ordinary, one-dimensional anti-derivative
$\mathcal{P}_1(x):=\idx{-\infty}{x}{P_1(x')}{x'}$ of -- for example -- $P_1$ is
known explicitly. We define the vector-valued function
\begin{math}
\vv{F}(\vv{x}) :=
\left(
\mathcal{P}_1(x_1)\prod_{d=2}^{\spdim}P_d(x_d),\ 0,\ 0,\ \ldots,\ 0
\right)^\top,
\end{math}
and note that $\nabla\cdot\vv{F}=\prod_{d=1}^{\spdim}P_d(x_d) =
b^n_{h,\gv{\lambda}}b^n_{h,\gv{\mu}}$. The divergence theorem thus allows us to
convert the volume integral to a boundary integral:
\begin{equation}
\idx{\Omega}{}{b^n_{h,\gv{\lambda}}b^n_{h,\gv{\mu}}}{\vv{x}} =
\idx{\Omega}{}{\nabla\cdot\vv{F}}{\vv{x}} =
\idx{\pd\Omega}{}{\mathcal{P}_1(x_1)(\vv{e}_1\cdot\vv{n})\prod_{d=2}^{\spdim}
P_d(x_d)}{S(\vv{x})}.
\end{equation}
On each patch of the boundary mesh the integrand on the right is $W^{n-1,\infty}$-smooth.
The integral can thus be efficiently approximated with standard quadrature rules on the
boundary mesh. This is similar to the approach of Duczek and Gabbert,~\cite{duczek2015}
who successfully applied it to less smooth shape functions. Once the integrals
have been computed, quadrature rules can for example be constructed using the
following procedure:
\begin{enumerate}
	\item Randomly scatter (additional) points $\vv{x}_{\gv{\lambda},i}$ over
	      $\Omega\cap\supp b^n_{h,\gv{\lambda}}$.
    \item Solve the following linear programming problem for the unknown weights
          $w_{\gv{\lambda},i}$:
    \begin{equation}
    \begin{split}
	\min &\sum_{i=1}^{N_{\gv{\lambda}}}w_{\gv{\lambda},i}\\
	\sum_{i=1}^{N_{\gv{\lambda}}}w_{i,\gv{\lambda}}%
	b^n_{h,\gv{\lambda}}(\vv{x}_{\gv{\lambda},i})
	b^n_{h,\gv{\mu}}(\vv{x}_{\gv{\lambda},i}) &=
	\idx{\Omega}{}{b^n_{h,\gv{\lambda}}b^n_{h,\gv{\mu}}}{\vv{x}}\qquad
	\forall\gv{\mu}\in\Lambda_h^m(\Omega),\\
	w_{\gv{\lambda},i}\geq 0
    \end{split}
    \end{equation}
    \item If no solution exists that fulfils the stability criterion, go to step~1
    and repeat.
\end{enumerate}
Given such quadrature nodes and weights, let us denote by $(c_{\gv{\lambda}})_{%
\gv{\lambda}\in\Lambda_h^n(\Omega)}$ the B-spline coefficients of $\tilde{u}_h$
such that $\tilde{u}_h=\sum_{\gv{\lambda}\in\Lambda_h^n(\Omega)}c_{\gv{\lambda}}
b^n_{h,\gv{\lambda}}$. We then define the particle approximation $u_h$ as follows:
\begin{equation}\label{eqn:particleapprox}
u_h := \sum_{\gv{\lambda}\in\Lambda_h^n(\Omega)}\sum_{i=1}^{N_{\gv{\lambda}}}
w_{\gv{\lambda},i}c_{\gv{\lambda}}b^n_{h,\gv{\lambda}}(\vv{x}_{\gv{\lambda},i})\delta_{\vv{x}_{\gv{\lambda},i}}.
\end{equation}

\subsection{Error Bounds}
We will identify the function $\tilde{u}_h\in V_h^{n,p}(\Omega)$ with the functional
\begin{equation}
\langle \tilde{u}_h, \varphi \rangle := \idx{\Omega}{}{\tilde{u}_h\varphi}{\vv{x}}\qquad
\forall\varphi\in L^q(\Omega).
\end{equation}
The key result of this section is the following theorem.
\begin{theorem}\label{thm:particleerror}
The particle approximation $u_h$ from \eqref{eqn:particleapprox} fulfills for
all $1 < p\leq\infty$, $s>\tfrac{\spdim}{q}$ or $s=\spdim$ if $q=1$:
\begin{equation}\label{eqn:particlebound}
\Vert u_h\Vert_{W^{-s,p}(\Omega)}\lesssim h^{\frac{\spdim}{q}}\Vert\tilde{u}_h\Vert_{L^p(\Omega_h)}.
\end{equation}
Moreover, for all $\tfrac{\spdim}{q}<s\leq n$ or $s=\spdim$ if $q=1$
the following error bound holds:
\begin{equation}\label{eqn:particleerrbound}
\Vert\tilde{u}_h-u_h\Vert_{W^{-s,p}(\Omega)}\lesssim h^s\Vert\tilde{u}_h\Vert_{L^p(\Omega_h)}.
\end{equation}
\end{theorem}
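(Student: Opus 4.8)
The engine of the proof is a quadrature–exactness identity: the particle field $u_h$ and the spline $\tilde u_h$ induce \emph{the same} functional on the test space $V_h^n(\Omega)$. I would first show that for every $v_h=\sum_{\gv{\mu}\in\Lambda_h^n(\Omega)}d_{\gv{\mu}}b^n_{h,\gv{\mu}}\in V_h^n(\Omega)$ one has
\begin{equation}
\langle u_h,v_h\rangle=\langle\tilde u_h,v_h\rangle=\idx{\Omega}{}{\tilde u_h v_h}{\vv{x}}.
\end{equation}
This follows by expanding $\langle u_h,v_h\rangle=\sum_{\gv{\lambda}}c_{\gv{\lambda}}\sum_i w_{\gv{\lambda},i}b^n_{h,\gv{\lambda}}(\vv{x}_{\gv{\lambda},i})v_h(\vv{x}_{\gv{\lambda},i})$, inserting $v_h$, and interchanging the finite sums. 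Since every node satisfies $\vv{x}_{\gv{\lambda},i}\in\Omega$, only indices $\gv{\mu}\in\Lambda_h^n(\Omega)$ contribute, so the defining quadrature identity applies to each pair $(\gv{\lambda},\gv{\mu})$ and converts the inner sum into $\idx{\Omega}{}{b^n_{h,\gv{\lambda}}b^n_{h,\gv{\mu}}}{\vv{x}}$; recombining yields the claim. This single cancellation is what drives the consistency estimate.

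For the error bound I would argue by duality. Fix $\varphi\in W^{s,q}(\Omega)$ with $s>\tfrac{\spdim}{q}$ (so point evaluation is controlled by $\|\varphi\|_{W^{s,q}}$ through the Sobolev embedding), extend it by the Stein operator to $E\varphi$, and set $\varphi_h:=P_h^n(E\varphi)|_\Omega\in V_h^n(\Omega)$. By the identity above the spline part is annihilated, leaving
\begin{equation}
\langle\tilde u_h-u_h,\varphi\rangle=\langle\tilde u_h-u_h,\varphi-\varphi_h\rangle.
\end{equation}
The $\tilde u_h$–contribution is handled by Hölder and \cref{thm:quasiinterpolant}: $|\langle\tilde u_h,\varphi-\varphi_h\rangle|\le\|\tilde u_h\|_{L^p(\Omega)}\|\varphi-\varphi_h\|_{L^q(\Omega)}\lesssim h^s\|\tilde u_h\|_{L^p(\Omega_h)}\|\varphi\|_{W^{s,q}(\Omega)}$, using $\|E\varphi\|_{W^{s,q}(\Omega_h)}\lesssim\|\varphi\|_{W^{s,q}(\Omega)}$. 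For the $u_h$–contribution, set $\psi:=\varphi-\varphi_h$ and use $0\le b^n_{h,\gv{\lambda}}\le 1$ together with the stability constraint $\sum_i|w_{\gv{\lambda},i}|\le C_{\mathrm{Stab}}(nh)^\spdim$ to obtain
\begin{equation}
|\langle u_h,\psi\rangle|\lesssim h^\spdim\sum_{\gv{\lambda}\in\Lambda_h^n(\Omega)}|c_{\gv{\lambda}}|\,\|\psi\|_{L^\infty(\supp b^n_{h,\gv{\lambda}})}.
\end{equation}
On each support $\hat Q_{\gv{\lambda}}$, of diameter $\sim h$, a scaled Sobolev embedding combined with the local consistency bounds of \cref{thm:quasiinterpolant} gives $\|\psi\|_{L^\infty(\hat Q_{\gv{\lambda}})}\lesssim h^{s-\spdim/q}\|\varphi\|_{W^{s,q}(\hat Q_{\gv{\lambda}})}$. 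Hölder in $\ell^p$–$\ell^q$, the finite overlap of the patches $\hat Q_{\gv{\lambda}}$, and the B-spline stability \eqref{eqn:Bstab} in the form $\|(c_{\gv{\lambda}})\|_{\ell^p}\lesssim h^{-\spdim/p}\|\tilde u_h\|_{L^p(\Omega_h)}$ then combine; the powers $h^{\spdim}\cdot h^{s-\spdim/q}\cdot h^{-\spdim/p}$ collapse to $h^s$ by $\tfrac1p+\tfrac1q=1$. Taking the supremum over $\varphi$ gives \eqref{eqn:particleerrbound}.

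The stability bound \eqref{eqn:particlebound} follows from the same chain applied to $\varphi$ directly rather than to $\psi$: the weight constraint and B-spline positivity bound $\langle u_h,\varphi\rangle$ by $h^\spdim\sum_{\gv{\lambda}}|c_{\gv{\lambda}}|\,\|\varphi\|_{L^\infty(\hat Q_{\gv{\lambda}})}$, the scaled embedding passes from the local $L^\infty$–norm to the local $W^{s,q}$–norm, and Hölder together with \eqref{eqn:Bstab} control the result by $\|\tilde u_h\|_{L^p(\Omega_h)}$. Fractional $s$ I would treat last: prove both bounds for integer $s$ and then interpolate, relying on the Besov form of \cref{thm:quasiinterpolant} and on $B_1^{s,p}(\Omega)\hookrightarrow W^{s,p}(\Omega)\hookrightarrow B_\infty^{s,p}(\Omega)$, with the refined embedding $B_1^{\spdim/p,p}\hookrightarrow C$ covering the endpoint $s=\spdim$, $q=1$.

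The main obstacle is precisely this localized, scale-correct Sobolev/Besov embedding on the $O(h)$–sized B-spline supports, together with the passage from the per-support estimates to a global one. The summation must be performed in Besov norms so that the local estimates may legitimately be added, and the constants must be shown uniform in $h$ and \emph{independent of how $\pd\Omega$ cuts the grid} — a point where the finite-overlap structure of the supports and the uniform quasi-interpolation constants are essential. This is also where the hypothesis $s>\tfrac{\spdim}{q}$ (with endpoint $s=\spdim$ when $q=1$) becomes unavoidable, since it is exactly the threshold at which the point-evaluation functionals underlying $u_h$ are bounded on $W^{s,q}(\Omega)$.
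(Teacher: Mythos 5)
Your proposal is, in substance, the paper's own proof: your quadrature--exactness identity is precisely what the paper uses in the form $\langle\tilde u_h-u_h,P_h^nE\varphi\rangle=0$, and the handling of the two remaining terms (H\"older plus the weight constraint for the $u_h$-part, \cref{thm:quasiinterpolant} for the $\tilde u_h$-part, finite overlap to globalize) coincides with the paper's argument. Your proof of the error bound \eqref{eqn:particleerrbound} is correct, with two small repairs: the local estimate should read $\Vert\psi\Vert_{L^\infty(\supp b^n_{h,\gv{\lambda}})}\lesssim h^{s-\spdim/q}\Vert E\varphi\Vert_{W^{s,q}(\widehat{\supp b^n_{h,\gv{\lambda}}})}$, with $E\varphi$ on the enlarged stencil patch (the patch protrudes from $\Omega$, so the quasi-interpolant sees the extension, not $\varphi$); and your plan to prove integer $s$ first and interpolate cannot reach $\tfrac{\spdim}{q}<s<\lceil\tfrac{\spdim}{q}\rceil$, because there is no admissible integer below that range to interpolate from. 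The paper avoids this by running the argument directly for fractional $s$ via the Besov estimate in \cref{thm:quasiinterpolant}, which is also why it invokes the scale-invariant embedding $B_1^{\spdim/q,q}\hookrightarrow C$ rather than an integer-order one.

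The stability bound \eqref{eqn:particlebound} is where your argument has a genuine gap --- and it is the same gap as in the paper's own proof. The step \enquote{the scaled embedding passes from the local $L^\infty$-norm to the local $W^{s,q}$-norm} is not available with an $h$-uniform constant: on a patch $Q$ of diameter $\sim h$, scaling gives $\Vert\varphi\Vert_{L^\infty(Q)}\lesssim h^{-\spdim/q}\Vert\varphi\Vert_{L^q(Q)}+h^{s-\spdim/q}\vert\varphi\vert_{W^{s,q}(Q)}$, and the factor $h^{-\spdim/q}$ cannot be removed (test $\varphi\equiv1$). In the error bound this loss is exactly compensated because the chain is applied to $\psi=\varphi-P_h^nE\varphi$ and \cref{thm:quasiinterpolant} supplies $\Vert\psi\Vert_{L^q(Q)}\lesssim h^s\Vert E\varphi\Vert_{W^{s,q}(\hat Q)}$; applied to $\varphi$ itself there is no compensation, and your chain honestly yields $h^{\spdim}\cdot h^{-\spdim/p}\cdot h^{-\spdim/q}=h^0$, i.e.\ only $\Vert u_h\Vert_{W^{-s,p}(\Omega)}\lesssim\Vert\tilde u_h\Vert_{L^p(\Omega_h)}$. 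This is not repairable, because \eqref{eqn:particlebound} is false as stated: take $\tilde u_h\equiv1$ (all $c_{\gv{\lambda}}=1$) and $\varphi\equiv1$. Quadrature exactness applied to the partition of unity $\sum_{\gv{\mu}\in\Lambda_h^n(\Omega)}b^n_{h,\gv{\mu}}\equiv1$ on $\Omega$ gives $\sum_{i}w_{\gv{\lambda},i}b^n_{h,\gv{\lambda}}(\vv{x}_{\gv{\lambda},i})=\idx{\Omega}{}{b^n_{h,\gv{\lambda}}}{\vv{x}}$ for every $\gv{\lambda}$, hence
\begin{equation}
\langle u_h,\varphi\rangle
=\sum_{\gv{\lambda}\in\Lambda_h^n(\Omega)}\idx{\Omega}{}{b^n_{h,\gv{\lambda}}}{\vv{x}}
=\meas_\spdim(\Omega),
\end{equation}
so $\Vert u_h\Vert_{W^{-s,p}(\Omega)}\gtrsim1$, whereas $h^{\spdim/q}\Vert\tilde u_h\Vert_{L^p(\Omega_h)}\sim h^{\spdim/q}\to0$ for every $p>1$. (The paper's proof commits the identical error when it applies \enquote{the Sobolev embedding on $\varphi$} patchwise on $\supp b^n_{h,\gv{\lambda}}\cap\Omega$ as if the constant were independent of $h$.) The $h$-uniform version $\Vert u_h\Vert_{W^{-s,p}(\Omega)}\lesssim\Vert\tilde u_h\Vert_{L^p(\Omega_h)}$ is what both arguments actually establish, and it suffices for everything downstream: the proof of \cref{thm:main} uses \eqref{eqn:particlebound} only to know that $u_h$ is a bounded functional, while the quantitative work is done by \eqref{eqn:particleerrbound}.
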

\begin{proof}
For arbitrary $\varphi\in W^{s,q}(\Omega)\hookrightarrow C(\Omega)$ it holds
by H\"older's inequality that:
\begin{multline}
\bigl|\langle u_h,\varphi\rangle\bigr| =
\biggl|\sum_{\gv{\lambda}\in\Lambda_h^n(\Omega)}\sum_{i=1}^{N_{\gv{\lambda}}}
w_{\gv{\lambda},i}c_{\gv{\lambda}}b^n_{h,\gv{\lambda}}(\vv{x}_{\gv{\lambda},i})\varphi(\vv{x}_{\gv{\lambda},i})\biggr| \\
\leq \sum_{\gv{\lambda}\in\Lambda_h^m(\Omega)}
\Vert\tilde{u}_h\Vert_{L^\infty(\supp b^n_{h,\gv{\lambda}})}
\Vert\varphi\Vert_{L^{\infty}(\supp b^n_{h,\gv{\lambda}}\cap\Omega)}
\underbrace{\sum_{i=1}^{N_{\gv{\lambda}}}|w_{\gv{\lambda},i}|}_{\lesssim h^{\spdim}}.
\end{multline}
Again applying H\"older's inequality, an inverse inequality for $\tilde{u}_h$,
and the Sobolev embedding on $\varphi$ this yields with the usual modifications
for $p=\infty$:
\begin{multline}
\bigl|\langle u_h,\varphi\rangle\bigr|\lesssim h^{\frac{\spdim}{q}}
\biggl(\sum_{\gv{\lambda}\in\Lambda_h^n(\Omega)}\Vert\tilde{u}_h\Vert_{L^p(\supp b^n_{h,\gv{\lambda}})}^p\biggr)^{\frac{1}{p}}
\biggl(\sum_{\gv{\lambda}\in\Lambda_h^n(\Omega)}\Vert\varphi\Vert_{W^{s,q}(\supp b^n_{h,\gv{\lambda}}\cap\Omega)}^q\biggr)^{\frac{1}{q}} \\
\lesssim h^{\frac{\spdim}{q}}\Vert\tilde{u}_h\Vert_{L^p(\Omega_h)}\Vert\varphi\Vert_{W^{s,q}(\Omega)}.
\end{multline}
This proves \eqref{eqn:particlebound}. For the error bound first note that:
\begin{equation}
\langle\tilde{u}_h-u_h,\varphi\rangle =
\underbrace{\langle\tilde{u}_h-u_h,P_{h}^{n}E\varphi\rangle}_{=0} +
\langle\tilde{u}_h,\varphi-P_{h}^{n}E\varphi\rangle - \langle u_h,\varphi-P_{h}^{n}E\varphi\rangle,
\end{equation}
where for the second term we have:
\begin{equation}
\langle\tilde{u}_h,
\varphi-P_h^nE\varphi\rangle\leq \Vert\tilde{u}_h\Vert_{L^p(\Omega_h)}
\Vert E\varphi-P_h^nE\varphi\Vert_{L^q(\Omega_h)}
\lesssim h^s\Vert\tilde{u}_h\Vert_{L^p(\Omega_h)}\Vert\varphi\Vert_{W^{s,q}(\Omega)}.
\end{equation}
For the last term we obtain analogous to the proof of \eqref{eqn:particlebound} that
\begin{equation}
\langle u_h,\varphi - P_{h}^{n}E\varphi\rangle \lesssim
h^{\frac{\spdim}{q}}\Vert\tilde{u}_h\Vert_{L^p(\Omega_h)}
\biggl(\sum_{\gv{\lambda}\in\Lambda_h^n(\Omega)}\Vert E\varphi-
P_h^nE\varphi\Vert_{L^{\infty}(\supp b^n_{h,\gv{\lambda}})}^q\biggr)^{\frac{1}{q}}.
\end{equation}
We now apply the Sobolev embedding for the Besov space $B_1^{\frac{\spdim}{q},q}
(\Omega_h)\hookrightarrow C(\Omega_h)$ and obtain using \cref{thm:quasiinterpolant}:
\begin{equation}
\Vert E\varphi-P_h^nE\varphi\Vert_{L^{\infty}(\supp b^n_{h,\gv{\lambda}})}
\lesssim
\Vert E\varphi-P_h^nE\varphi\Vert_{B^{\frac{\spdim}{q},q}_1(\supp b^n_{h,\gv{\lambda}})}
\lesssim h^{s-\frac{\spdim}{q}}\Vert E\varphi\Vert_{W^{s,q}(\widehat{\supp b^n_{h,\gv{\lambda}})}},
\end{equation}
where $\widehat{\supp b^n_{h,\gv{\lambda}}} = \bigcup_{\gv{\mu}\in\Lambda_h^n(\supp b^n_{h,\gv{\lambda}})}
\supp b^n_{h,\gv{\mu}}$. Inequality \eqref{eqn:particleerrbound} now again follows
by a finite overlap argument.
\end{proof}

One of the key features of this result is the fact that these estimates only depend 
on $L^p$-norms of the spline $\tilde{u}_h$, similar to the results of Cohen and
Perthame.~\cite{cohen2000} Previous estimates have mostly been of the form:
$\Vert u - u_h\Vert_{W^{-s,p}(\Omega)}\lesssim h^s\Vert u\Vert_{W^{s,p}(\Omega)}$,
\cite[Theorem A.1.1]{cottet2000} suggesting that there might be room for
improvement to $h^{2s}$. This is not the case, and we believe that this fact is
not well-known in the particle method communities. We therefore recall a theorem
of Bakhvalov,~\cite[Chapter~4, §3]{sobolev1997} which indicates that these
estimates are in fact optimal in terms of convergence order.
\begin{theorem}[Bakhvalov]
Let $\Omega\subset\wholespace$ be a bounded Lipschitz domain, $n\in\mathbb{N}$,
$n>\tfrac{\spdim}{2}$, and let $u\equiv 1$. Let $u_{h,k} = \sum_{i=1}^{N_k} U_{i,k}
\delta_{\vv{x}_{i,k}}$, $k=1,2,\ldots$, denote a sequence of particle
approximations of $u$ such that $N_k\to\infty$ as $k\to\infty$ and let us
define the average particle spacing as $h:=h(k):= \sqrt[\spdim]{\meas_{\spdim}(\Omega)/N_k}$.
Then \textbf{for every such sequence} one has:
\begin{equation}
\Vert u - u_{h,k}\Vert_{W^{-n,2}(\Omega)} \gtrsim h^n \qquad k\to\infty,
\end{equation}
with the hidden constant independent of $k$.
\end{theorem}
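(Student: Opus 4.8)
The plan is to use the dual characterisation of the negative norm,
\[
\Vert u - u_{h,k}\Vert_{W^{-n,2}(\Omega)} = \sup_{v\in W^{n,2}(\Omega)}
\frac{\langle u-u_{h,k},v\rangle}{\Vert v\Vert_{W^{n,2}(\Omega)}},
\]
and to exhibit one explicit test function $v$ — built as a superposition of many localized bumps — for which the pairing is bounded below while $\Vert v\Vert_{W^{n,2}(\Omega)}$ stays controlled. Since $u\equiv 1$ we have $\langle u-u_{h,k},v\rangle = \idx{\Omega}{}{v}{\vv{x}} - \sum_{i=1}^{N_k}U_{i,k}v(\vv{x}_{i,k})$, so the aim is a $v$ that annihilates the quadrature sum while keeping $\idx{\Omega}{}{v}{\vv{x}}$ of order one. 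The hypothesis $n>\tfrac{\spdim}{2}$ guarantees $W^{n,2}(\Omega)\hookrightarrow C(\Omega)$, so that the point evaluations are bounded functionals and the left-hand side is the relevant finite quantity.

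First I would fix a reference bump $\phi\in C_c^\infty(\wholespace)$ with $\phi\geq 0$, $\supp\phi\subset[0,1]^\spdim$, and $\idx{}{}{\phi}{\vv{x}}>0$, and subdivide a fixed interior part of $\Omega$ into cubes of side length $\alpha h$, with $\alpha\in(0,1)$ a small constant to be chosen. Discarding the cubes meeting $\pd\Omega$ costs only an $O(h)$ fraction of the measure — this is where the Lipschitz hypothesis enters, bounding the boundary layer by its finite surface measure — so the number $M$ of admissible interior cubes still satisfies $M\sim\meas_\spdim(\Omega)/(\alpha h)^\spdim = N_k/\alpha^\spdim$. Since the sequence carries only $N_k$ nodes, at most $N_k$ cubes can contain a node, hence at least $M-N_k\sim N_k(\alpha^{-\spdim}-1)$ cubes are \emph{empty}; for $\alpha$ fixed and small this is a definite fraction of $M$, so there are $|J|\sim N_k$ empty cubes.

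The decisive step is to use \emph{many} bumps rather than one. In each empty cube I place a scaled translate $\phi_j(\vv{x}) = \phi\bigl((\vv{x}-\vv{y}_j)/(\alpha h)\bigr)$ supported in that cube and set $v:=\sum_{j\in J}\phi_j\in C_c^\infty(\Omega)$. Because the supports are pairwise disjoint and the nodes lie outside the empty cubes, $v$ vanishes at every node, so the quadrature sum is zero and $\langle u-u_{h,k},v\rangle = \idx{\Omega}{}{v}{\vv{x}} = |J|(\alpha h)^\spdim\idx{}{}{\phi}{\vv{x}}\sim N_k h^\spdim\sim 1$, independently of $k$. The same disjointness yields $\Vert v\Vert_{W^{n,2}(\Omega)}^2 = \sum_{j\in J}\Vert\phi_j\Vert_{W^{n,2}}^2$, and the scaling estimate $\Vert\phi_j\Vert_{W^{n,2}}^2\lesssim(\alpha h)^{\spdim-2n}\Vert\phi\Vert_{W^{n,2}}^2$ (valid for $\alpha h\leq 1$, the top-order seminorm dominating) gives $\Vert v\Vert_{W^{n,2}}^2\lesssim N_k(\alpha h)^{\spdim-2n}\sim h^{-2n}$, that is $\Vert v\Vert_{W^{n,2}}\lesssim h^{-n}$. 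Feeding this single $v$ into the dual characterisation then gives $\Vert u-u_{h,k}\Vert_{W^{-n,2}(\Omega)}\gtrsim h^n$, as claimed.

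The main obstacle is the geometric/combinatorial core, not the estimates: one must secure a positive fraction of genuinely node-free cells \emph{and} keep the test function's norm at $h^{-n}$. A single bump would only produce the far weaker bound $h^{n+\spdim/2}$; the gain comes precisely from superposing $\sim N_k$ disjoint bumps, so that $\idx{\Omega}{}{v}{\vv{x}}$ remains of order one while the $\ell^2$-additivity of the squared norm over disjoint supports prevents $\Vert v\Vert_{W^{n,2}}$ from growing faster than $h^{-n}$. The remaining care is the treatment of the boundary cubes so that the count $M\sim N_k/\alpha^\spdim$ survives, for which the Lipschitz regularity of $\pd\Omega$ is exactly what is needed.
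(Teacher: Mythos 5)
Your proposal is correct, and in fact it supplies something the paper does not: the paper states this theorem purely as a citation to Sobolev--Vaskevich~\cite[Chapter~4, \S 3]{sobolev1997} and gives no proof at all, so there is no internal argument to compare against. What you have reconstructed is essentially Bakhvalov's classical counting argument, and all the steps check out: with cells of side $\alpha h$ one has $M\sim N_k/\alpha^\spdim$ cells, the Lipschitz boundary layer removes only an $O(h)$ fraction of them, at most $N_k$ cells can be blocked by nodes, so for fixed small $\alpha$ a positive fraction of the cells is node-free; the superposition $v=\sum_{j}\phi_j$ over disjoint supports kills the quadrature sum identically (crucially, \emph{independently of the weights} $U_{i,k}$, which may be adversarial), keeps $\idx{\Omega}{}{v}{\vv{x}}\sim 1$ since $|J|(\alpha h)^\spdim\sim 1$, and the $\ell^2$-additivity of the squared Sobolev norm over disjoint supports together with the scaling $\Vert\phi_j\Vert_{W^{n,2}}^2\lesssim(\alpha h)^{\spdim-2n}$ gives $\Vert v\Vert_{W^{n,2}(\Omega)}\lesssim h^{-n}$; the quotient then yields the claimed lower bound, and the hypothesis $n>\spdim/2$ is used exactly where you say, to make point evaluation continuous on $W^{n,2}$ so the pairing is well defined. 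Also note that your $v\in C_c^\infty(\Omega)$ certifies the lower bound under the paper's convention $W^{-n,2}(\Omega)=(W^{n,2}(\Omega))'$ as well as under the $W^{n,2}_0$ convention. Two small points deserve tightening: take $\supp\phi$ compactly inside the \emph{open} unit cube, so that a node lying on a shared face of two cells cannot lie in the support of either cell's bump (otherwise a single node could block up to $2^\spdim$ cells; the count survives either way, but the fix makes \enquote{each node blocks at most one cell} literally true); and state explicitly that the conclusion holds for $k$ large enough that $Ch\leq\tfrac12$ (boundary layer) and $\alpha h\leq 1$ (scaling), which is exactly what the asymptotic formulation \enquote{as $k\to\infty$} permits.
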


Noting that the error bounds only depend on the $L^p(\Omega_h)$-norm of the
function $\tilde{u}_h$, this constraint can to some extent be bypassed by
choosing $n$ very large. This would later allow one to chose the smoothing
length $\sigma$ essentially proportional to $h$. On the other hand, the
hidden constants in the $\lesssim$-notation get larger as $n$ grows.
Furthermore, this approach would require the use of equally smooth trial
spaces. The spline spaces that we are going to employ for regularization in
the next section only have finite smoothness, however.

\section{Particle Regularization}\label{sec:regularization}
Let $n\in\mathbb{N}$ and $1\leq p\leq\infty$. Our approach will make use of the
following operators:
\begin{align}
A: V_\sigma^{n,p}(\Omega_\sigma)\to V_\sigma^{-n,p}(\Omega_\sigma),\quad
\langle Au_\sigma, v_\sigma\rangle &:= \idx{\Omega}{}{u_\sigma v_\sigma}{\vv{x}}, \\
J: V_\sigma^{n,p}(\Omega_\sigma)\to V_\sigma^{-n,p}(\Omega_\sigma),\quad
\langle Ju_\sigma, v_\sigma\rangle &:= \sigma^{2n-1}\sum_{F\in\mathcal{F}_\sigma}
\idx{F}{}{\left\llbracket\pdx{^{n-1}u_\sigma}{\vv{n}_F^{n-1}}\right\rrbracket
\left\llbracket\pdx{^{n-1}v_\sigma}{\vv{n}_F^{n-1}}\right\rrbracket}{S},
\end{align}
and $A_\varepsilon := A + \varepsilon J$, where $\varepsilon>0$ denotes a
user-defined stabilization parameter. The symbol $\llbracket\cdot\rrbracket$
refers to the jump operator; it is the difference of the one-sided traces
on a face $F$. $\vv{n}_F$ stands for the face's normal vector, which in
our case always coincides with some Cartesian basis vector: $\vv{n}_F\in
\lbrace{\vv{e}_1,\vv{e}_2,\ldots,\vv{e}_{\spdim}}\rbrace$. The stabilization
operator $J$ will be called the \emph{ghost penalty}.~\cite{burman2010} The
operator $A$ effectively restricts a function from $\Omega_\sigma$ to $\Omega$.
We will establish that its stabilized version $A_\varepsilon$ is invertible,
yielding the \emph{approximate extension operator $A_\varepsilon^{-1}$}.

\subsection{Continuity and Consistency}
For $\varepsilon=0$ the approximate extension operator $A_0^{-1}$ is the 
$L^2(\Omega)$-projection onto $V_\sigma^{n}(\Omega_\sigma)$. For $\varepsilon>0$,
however, $A_\varepsilon^{-1}$ ceases to be a projection, but the
difference to $A_0^{-1}$ is small; a fact we will call \emph{consistency}.
The main difference to previous analyses of the ghost penalty operator is
that we consider results in $L^p$-spaces for $p\neq 2$.  
\begin{lemma}\label{thm:Jcontinuity}
The ghost-penalty operator $J$ is continuous. In other words, for all
$u_\sigma\in V_\sigma^{n,p}(\Omega_\sigma)$, $n\in\mathbb{N}$, $1\leq p\leq\infty$,
it holds that:
\begin{equation}\label{eqn:Jcontinuity}
\Vert Ju_\sigma\Vert_{V^{-n,p}(\Omega_\sigma)}\lesssim \Vert u_\sigma\Vert_{L^p(\Omega_\sigma)}.
\end{equation}
Moreover, for any $u\in W^{s,p}(\Omega_\sigma)$, $0\leq s\leq n$, the
quasi-interpolant $P_\sigma^n u$ of $u$ fulfils:
\begin{equation}\label{eqn:Jconsistency}
\Vert JP_\sigma^n u\Vert_{V^{-n,p}(\Omega_\sigma)} \lesssim
\sigma^{s}\Vert u\Vert_{W^{s,p}(\Omega_\sigma)}.
\end{equation}
\end{lemma}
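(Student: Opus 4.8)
The plan is to prove the two estimates separately, using duality: since the target norm is $\Vert J(\cdot)\Vert_{V^{-n,p}(\Omega_\sigma)}$, I would test $Ju_\sigma$ against an arbitrary $v_\sigma\in V_\sigma^{n,q}(\Omega_\sigma)$ with $\Vert v_\sigma\Vert_{L^q(\Omega_\sigma)}\leq 1$, bound the resulting bilinear form $\langle Ju_\sigma,v_\sigma\rangle$, and take the supremum. The whole difficulty is therefore contained in estimating the single sum over faces
\begin{equation}
\langle Ju_\sigma,v_\sigma\rangle = \sigma^{2n-1}\sum_{F\in\mathcal{F}_\sigma}
\idx{F}{}{\left\llbracket\pdx{^{n-1}u_\sigma}{\vv{n}_F^{n-1}}\right\rrbracket
\left\llbracket\pdx{^{n-1}v_\sigma}{\vv{n}_F^{n-1}}\right\rrbracket}{S}.
\end{equation}

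For the \textbf{continuity estimate} \eqref{eqn:Jcontinuity}, I would first apply H\"older's inequality on each face $F$ to split the product of jumps, and then H\"older again over the (finite) index set $\mathcal{F}_\sigma$ with exponents $p$ and $q$. The heart of the matter is to bound each jump factor by an $L^p$- or $L^q$-norm of the spline on the adjacent cells. The key local tool is the inverse estimate from \cref{thm:inverseineq}: on a single cell $Q$ one has, tracking the $\sigma$-powers carefully, a bound of the schematic form $\Vert \pd^{n-1}_{\vv{n}_F}u_\sigma\Vert_{L^p(F)}\lesssim \sigma^{-\frac{1}{p}}\sigma^{-(n-1)}\Vert u_\sigma\Vert_{L^p(Q)}$, where the factor $\sigma^{-1/p}$ accounts for passing from a volume to a codimension-one face (via a trace inequality, itself a consequence of scaling and the inverse estimate) and $\sigma^{-(n-1)}$ for the $(n-1)$ derivatives. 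The jump is just the difference of two such one-sided traces, so it obeys the same bound. Multiplying the two jump bounds, integrating the face measure $\sigma^{\spdim-1}$, and collecting powers, the prefactor $\sigma^{2n-1}$ should cancel \emph{exactly} against $\sigma^{-2(n-1)}\cdot\sigma^{-1/p-1/q}\cdot\sigma^{\spdim-1}=\sigma^{-(2n-1)-\spdim/p-\spdim/q+\spdim}$; I expect to land on $\sigma^{\spdim(1-1/p-1/q)}=\sigma^0$ times the product $\Vert u_\sigma\Vert_{L^p}\Vert v_\sigma\Vert_{L^q}$ after the global H\"older step, giving \eqref{eqn:Jcontinuity} once the finite-overlap property of the cells is used. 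The main obstacle here is bookkeeping the $\sigma$-exponents correctly so that they telescope to zero.

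For the \textbf{consistency estimate} \eqref{eqn:Jconsistency}, the idea is that $J$ annihilates globally smooth functions: if $u$ were itself a $C^{n-1}$ spline, the $(n-1)$-st normal derivative would have no jump and $Ju=0$. Thus I would replace $u_\sigma=P_\sigma^n u$ by $P_\sigma^n u$ wherever possible and exploit that the jumps of $\pd^{n-1}(P_\sigma^n u)$ measure the failure of $u$ to be captured exactly. Concretely, on each face I would insert and subtract the \emph{true} derivative $\pd^{n-1}_{\vv{n}_F}u$ (or, to avoid regularity issues, a suitable polynomial/Taylor surrogate that is continuous across $F$): since the true derivative has no jump, $\llbracket\pd^{n-1}(P_\sigma^n u)\rrbracket = \llbracket\pd^{n-1}(P_\sigma^n u)-\pd^{n-1}u\rrbracket$, reducing the jump to one-sided approximation errors $\pd^{n-1}(P_\sigma^n u - u)$ on the two adjacent cells. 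These are then controlled by the consistency property \eqref{eqn:Pconsistency} of the quasi-interpolant with $l=n-1$, $k=s$ (for the relevant range of $s$), yielding a factor $\sigma^{s-(n-1)}\vert u\vert_{W^{s,p}}$ per cell. Feeding these into the same trace/inverse machinery as above and recounting the $\sigma$-powers should produce the extra factor $\sigma^{2\cdot((s-(n-1))}$ on the two jump factors, which, combined with the baseline $\sigma^0$ from the continuity count, leaves $\sigma^{s}$ as claimed.

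The \textbf{step I expect to be delicate} is making the consistency argument \emph{symmetric} in $u_\sigma$ and the test function $v_\sigma$: in \eqref{eqn:Jconsistency} only $u=P_\sigma^n u$ enjoys extra approximation accuracy, while $v_\sigma\in V_\sigma^{n,q}$ is a generic spline whose jump is merely bounded (via continuity) by $\Vert v_\sigma\Vert_{L^q}$. So I would apply the \emph{consistency} bound to the $u$-jump and the raw \emph{continuity} bound to the $v_\sigma$-jump, i.e. asymmetrically. This should give $\sigma^{s-(n-1)}\vert u\vert_{W^{s,p}}$ on one side and $\sigma^{-(n-1)-1/q}\Vert v_\sigma\Vert_{L^q(Q)}$ on the other; after the face integration and global H\"older step I anticipate the clean result $\sigma^{s}\Vert u\Vert_{W^{s,p}(\Omega_\sigma)}$. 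A secondary technical point is that \eqref{eqn:Pconsistency} is a per-cell estimate on the patch $\hat Q$, so the transition to the global $W^{s,p}(\Omega_\sigma)$-norm must again go through the finite-overlap argument, which requires that each cell lies in only boundedly many patches $\hat Q$ — a standard consequence of the B-splines' compact support and the uniform grid.
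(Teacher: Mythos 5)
Your treatment of the continuity estimate \eqref{eqn:Jcontinuity} is essentially the paper's proof: duality against $v_\sigma\in V_\sigma^{n,q}(\Omega_\sigma)$, H\"older on each face and over $\mathcal{F}_\sigma$, and the combination of a scaled trace inequality with local inverse estimates to get $\bigl\Vert\bigl\llbracket\pdx{^{n-1}u_\sigma}{\vv{n}_F^{n-1}}\bigr\rrbracket\bigr\Vert_{L^p(F)}\lesssim\sigma^{-\frac{1}{p}-(n-1)}\Vert u_\sigma\Vert_{L^p(Q(F))}$, followed by a finite-overlap argument. (One small slip: once you pair the $L^p(F)$- and $L^q(F)$-norms of the two jumps by H\"older, no additional face-measure factor $\sigma^{\spdim-1}$ enters; your displayed exponent arithmetic is internally inconsistent, though your final count $\sigma^0$ is correct.)

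The consistency estimate \eqref{eqn:Jconsistency} is where there is a genuine gap. Your mechanism --- use that the exact $(n-1)$-st derivative has no jump, so $\llbracket\pd^{n-1}P_\sigma^n u\rrbracket=\llbracket\pd^{n-1}(P_\sigma^n u-u)\rrbracket$, then pair a quasi-interpolation error bound on the $u$-side with the raw continuity bound on the $v_\sigma$-side --- is precisely the paper's argument, but the paper deploys it \emph{only for the endpoint case $s=n$}, and it cannot cover the full range $0\leq s\leq n$ claimed in the lemma. Three things fail when $s<n$: (i) for $u\in W^{s,p}(\Omega_\sigma)$ with $s\leq n-1$ the function $\pd^{n-1}u$ is merely an $L^p$-function (or worse), so it has no well-defined face traces and the cancellation $\llbracket\pd^{n-1}u\rrbracket=0$ is not justified; (ii) the trace inequality you need for $\pd^{n-1}(P_\sigma^n u-u)$ requires $P_\sigma^n u-u\in W^{n,p}$ element-wise, i.e.\ again $u\in W^{n,p}$ locally; (iii) the estimate \eqref{eqn:Pconsistency} with $l=n-1$, $k=s$ is only available for $l\leq k$, i.e.\ $s\geq n-1$. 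Your parenthetical fallback (a polynomial surrogate continuous across $F$, controlled by a Bramble--Hilbert bound on the two-cell patch, with element-wise inverse estimates applied to the piecewise polynomial $P_\sigma^n u-\pi$) could in principle be developed into a complete proof, but you do not develop it, and the bookkeeping you actually perform rests on the unavailable estimate (iii). The paper's route for the integer cases $s\in\lbrace 0,\ldots,n-1\rbrace$ needs no cancellation at all: apply the inverse estimate only to the test-function factor to get $\bigl|\langle Ju_\sigma,v_\sigma\rangle\bigr|\lesssim\sigma^{n-1}\Vert u_\sigma\Vert_{W^{n-1,p}(\Omega_\sigma)}\Vert v_\sigma\Vert_{L^q(\Omega_\sigma)}$, set $u_\sigma=P_\sigma^n u$, and conclude via the global inverse estimate \eqref{eqn:globalinverseineqrs} and the stability bound \eqref{eqn:Pcontinuity}, $\sigma^{n-1}\Vert P_\sigma^n u\Vert_{W^{n-1,p}(\Omega_\sigma)}\lesssim\sigma^{s}\Vert P_\sigma^n u\Vert_{W^{s,p}(\Omega_\sigma)}\lesssim\sigma^{s}\Vert u\Vert_{W^{s,p}(\Omega_\sigma)}$; the jump-cancellation argument is reserved for $s=n$, where the inverse-estimate route is impossible because splines do not lie in $W^{n,p}$. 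Finally, the paper reduces to integer $s$ and obtains the intermediate spaces by interpolation --- a step your direct treatment of general $s$ also omits, and which would be needed since the quasi-interpolant bounds you invoke are stated for integer smoothness.
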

\begin{proof}
For arbitrary $u_\sigma\in V_\sigma^{n,p}(\Omega_\sigma)$ and
$v_\sigma\in V_\sigma^{n,q}(\Omega_\sigma)$ we obtain by repeatedly using
Hölder's and the triangular inequality:
\begin{multline}
\bigl|\bigl\langle Ju_\sigma, v_\sigma \bigr\rangle\bigr| = \sigma^{2n-1}
\left|\sum_{F\in\mathcal{F}_\sigma}\idx{F}{}{\left\llbracket\pdx{^{n-1}u_\sigma}{\vv{n}_F^{n-1}}\right\rrbracket
\left\llbracket\pdx{^{n-1}v_\sigma}{\vv{n}_F^{n-1}}\right\rrbracket}{S}\right| \\
\leq \sigma^{2n-1}
\left(\sum_{F\in\mathcal{F}_\sigma}\left\Vert\left\llbracket
\pdx{^{n-1}u_\sigma}{\vv{n}_F^{n-1}} \right\rrbracket\right\Vert_{L^p(F)}^p\right)^{\frac{1}{p}}
\left(\sum_{F\in\mathcal{F}_\sigma}\left\Vert\left\llbracket
\pdx{^{n-1}v_\sigma}{\vv{n}_F^{n-1}} \right\rrbracket\right\Vert_{L^q(F)}^q\right)^{\frac{1}{q}}
\end{multline}
with the usual modifications for $p=\infty$ or $q=\infty$. For arbitrary $w\in
W^{1,p}(Q)$, $1\leq p\leq\infty$, $Q\in\Omega_\sigma$ an arbitrary cube from the
Cartesian grid, we have the trace estimate $\Vert w\Vert_{L^p(\pd Q)}\lesssim
\Vert w\Vert_{L^p(Q)}^{\frac{1}{q}}\Vert w\Vert_{W^{1,p}(Q)}^{\frac{1}{p}}$.~%
\cite[Lemma~(1.6.6)]{brenner2008} Together with an inverse estimate this leads
to:
\begin{align}
\left\Vert\left\llbracket\pdx{^{n-1}u_\sigma}{\vv{n}_F^{n-1}}\right\rrbracket\right\Vert_{L^p(F)}
&\lesssim
\Vert u_\sigma\Vert_{W^{n-1,p}(Q(F))}^{\frac{1}{q}}\Vert u_\sigma\Vert_{W^{n,p}(Q(F))}^{\frac{1}{p}}
\stackrel{\eqref{eqn:localinverseineq}}{\lesssim}
\sigma^{-\frac{1}{p}}\Vert u_\sigma\Vert_{W^{n-1,p}(Q(F))}, \label{eqn:tmp}\\
\left\Vert\left\llbracket\pdx{^{n-1}v_\sigma}{\vv{n}_F^{n-1}}\right\rrbracket\right\Vert_{L^q(F)}
&\lesssim
\Vert v_\sigma\Vert_{W^{n-1,q}(Q(F))}^{\frac{1}{p}}\Vert v_\sigma\Vert_{W^{n,q}(Q(F))}^{\frac{1}{q}}
\stackrel{\eqref{eqn:localinverseineq}}{\lesssim}
\sigma^{-\frac{1}{q}}\Vert u_\sigma\Vert_{W^{n-1,q}(Q(F))}.
\end{align}
The $W^{n,p}$- and $W^{n,q}$-norms in the intermediate step are to be interpreted
in the \enquote{broken}, element-wise sense and $Q(F)$ denotes the two elements
that $F$ is a face of. Thus, using a finite-overlap argument, one obtains together
with another application of the inverse estimates:
\begin{math}
\bigl|\bigl\langle Ju_\sigma, v_\sigma \bigr\rangle\bigr|
\lesssim \sigma^{2n-2}
\Vert u_\sigma\Vert_{W^{n-1,p}(\Omega_\sigma)}
\Vert v_\sigma\Vert_{W^{n-1,q}(\Omega_\sigma)}
\lesssim
\Vert u_\sigma\Vert_{L^p(\Omega_\sigma)}
\Vert v_\sigma\Vert_{L^q(\Omega_\sigma)}.
\end{math}

Let us now consider \eqref{eqn:Jconsistency}. It suffices to establish
this inequality for all integer values $s\in\lbrace 0,1,\ldots,n\rbrace$;
for the intermediate spaces the result then automatically follows by
interpolation. For integers $s\in\lbrace 0,\ldots,n-1\rbrace$ estimate
\eqref{eqn:Jconsistency} follows from
\begin{math}
\bigl|\bigl\langle Ju_\sigma, v_\sigma \bigr\rangle\bigr|
\lesssim \sigma^{n-1}
\Vert u_\sigma\Vert_{W^{n-1,p}(\Omega_\sigma)}
\Vert v_\sigma\Vert_{L^q(\Omega_\sigma)}
\end{math}
by letting $u_\sigma=P_\sigma^n u$ and:
\begin{equation}
\sigma^{n-1}\Vert u_\sigma\Vert_{W^{n-1,p}(\Omega_\sigma)}
\stackrel{\eqref{eqn:globalinverseineqrs}}{\lesssim}
\sigma^{s}\Vert u_\sigma\Vert_{W^{s,p}(\Omega_\sigma)}
\stackrel{\eqref{eqn:Pcontinuity}}{\lesssim}
\sigma^{s}\Vert u\Vert_{W^{s,p}(\Omega_\sigma)}.
\end{equation}
In order to show \eqref{eqn:Jconsistency} for $s=n$, we need to extend $J$'s
domain of definition. For this, note that the derivatives of order $n-1$ of
functions $u\in W^{n,p}(\Omega_\sigma)$ are continuous across hyper-surfaces and
thus $Ju=0$ for such $u$. In other words, $J$ is defined as an operator on
$V_\sigma^{n,p}(\Omega_\sigma)+W^{n,p}(\Omega_\sigma)$ and
$W^{n,p}(\Omega_\sigma)\subset\ker J$. For $\hat{u}:= P_\sigma^n u - u$ equation
\eqref{eqn:tmp} then becomes:
\begin{equation}
\left\Vert\left\llbracket\pdx{^{n-1}\hat{u}}{\vv{n}_F^{n-1}}\right\rrbracket\right\Vert_{L^p(F)}
\lesssim
\Vert\hat{u}\Vert_{W^{n-1,p}(Q(F))}^{\frac{1}{q}}\Vert\hat{u}\Vert_{W^{n,p}(Q(F))}^{\frac{1}{p}},
\end{equation}
where the $W^{n,p}(Q(F))$-norm is again to be interpreted element-wise. Now we
can make use of the approximation properties of $P_\sigma^n$:
\begin{align}
\Vert\hat{u}\Vert_{W^{n-1,p}(Q(F))} = \Vert u -P_\sigma^nu\Vert_{W^{n-1,p}(Q(F))}
&\stackrel{\eqref{eqn:Pconsistency}}{\lesssim}
\sigma\Vert u\Vert_{W^{n,p}(\hat{Q}(F))}, \\
\Vert\hat{u}\Vert_{W^{n,p}(Q(F))} =  \Vert u -P_\sigma^nu\Vert_{W^{n-1,p}(Q(F))}
&\stackrel{\eqref{eqn:Pconsistency}}{\lesssim}
\Vert u\Vert_{W^{n,p}(\hat{Q}(F))}.
\end{align}
Note that the norms on the right do not need to be interpreted element-wise,
because we have assumed $u\in W^{n,p}(\Omega_\sigma)$. Thus
\begin{equation}
\left\Vert\left\llbracket\pdx{^{n-1}\hat{u}}{\vv{n}_F^{n-1}}\right\rrbracket\right\Vert_{L^p(F)}
\lesssim
\sigma^{\frac{1}{q}}\Vert u\Vert_{W^{n,p}(\hat{Q}(F))}.
\end{equation}
Again invoking a finite-overlap argument, one thus obtains:
\begin{equation}
\bigl|\bigl\langle J\hat{u}, v_\sigma \bigr\rangle\bigr| \lesssim
\sigma^n
\Vert u\Vert_{W^{n,p}(\Omega_\sigma)}
\Vert v_\sigma\Vert_{L^q(\Omega_\sigma)}.
\end{equation}
The claim now follows by recalling that $JP_\sigma^nu=J(P_\sigma^nu-u)=J\hat{u}$.
\end{proof}

\subsection{Stability}
The following core result regarding the stability properties of the ghost penalty
operator in $L^2$ has already been established at several places in the literature,
for example by Lehrenfeld~\cite[Lemma~7]{lehrenfeld2018} or Massing et
al.~\cite[Lemma~5.1]{massing2014}:
\begin{lemma}\label{thm:l2stab}
Let $\sigma>0$ be sufficiently small and $\varepsilon>0$ big enough. One then has
for all $u_\sigma\in V_\sigma^n(\Omega_\sigma)$:
\begin{equation}\label{eqn:Jstability}
\Vert u_\sigma\Vert_{L^2(\Omega_\sigma)}^2 \lesssim
\Vert u_\sigma\Vert_{L^2(\Omega_\sigma^\circ)}^2 + \varepsilon\langle Ju_\sigma,u_\sigma\rangle.
\end{equation}
\end{lemma}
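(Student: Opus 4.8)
The plan is to reduce the global estimate to a local, cell-by-cell transfer inequality and then to propagate control from the uncut cells $\Omega_\sigma^\circ$ into the cut cells $\Omega_\sigma^\Gamma$ along the face-paths supplied by the technical assumption. Since $\Omega_\sigma$ is the disjoint (up to measure zero) union of uncut and cut cells, one has $\Vert u_\sigma\Vert_{L^2(\Omega_\sigma)}^2 = \Vert u_\sigma\Vert_{L^2(\Omega_\sigma^\circ)}^2 + \Vert u_\sigma\Vert_{L^2(\Omega_\sigma^\Gamma)}^2$, so it suffices to dominate $\Vert u_\sigma\Vert_{L^2(Q)}^2$ for each cut cell $Q\in\Omega_\sigma^\Gamma$ by an uncut-cell contribution and face-jump terms.

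The core step I would establish is the following two-cell estimate: if $Q_1,Q_2$ are neighbouring Cartesian cells sharing a face $F\in\mathcal{F}_\sigma$ with normal $\vv{n}_F$, then
\[
\Vert u_\sigma\Vert_{L^2(Q_1)}^2 \lesssim \Vert u_\sigma\Vert_{L^2(Q_2)}^2 + \sigma^{2n-1}\Bigl\Vert\llbracket\partial_{\vv{n}_F}^{n-1}u_\sigma\rrbracket\Bigr\Vert_{L^2(F)}^2 .
\]
Writing $p_j := u_\sigma|_{Q_j}\in\mathbb{Q}_{n-1}$ and splitting $p_1 = p_2 + (p_1-p_2)$ with both pieces regarded as polynomials on $\wholespace$, the term $\Vert p_2\Vert_{L^2(Q_1)}\lesssim\Vert p_2\Vert_{L^2(Q_2)}$ follows from norm equivalence on the finite-dimensional space $\mathbb{Q}_{n-1}$ after scaling $Q_1,Q_2$ to two fixed adjacent reference cubes, the constant being independent of $\sigma$ and of the position of the cells. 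For the difference I would exploit that $u_\sigma$ is globally $C^{n-2}$-smooth: choosing coordinates so that $F=\lbrace x_1 = c\rbrace$, all normal derivatives $\partial_1^j(p_1-p_2)$ of order $j\leq n-2$ vanish on $F$, so as a polynomial of coordinate degree at most $n-1$ in $x_1$ it factors as $p_1-p_2 = \tfrac{1}{(n-1)!}\llbracket\partial_1^{n-1}u_\sigma\rrbracket\,(x_1-c)^{n-1}$, the jump depending only on the tangential variables. Integrating $|x_1-c|^{2(n-1)}$ over the normal extent $\sigma$ of $Q_1$ produces precisely the weight $\sigma^{2n-1}$ together with the face $L^2$-norm of the jump, which is the asserted bound.

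Next I would chain this estimate along the path $(F_{\vv{i},1},\ldots,F_{\vv{i},K})\subset\mathcal{F}_\sigma$ attached to each cut cell $Q_\vv{i}\in\Omega_\sigma^\Gamma$. Applying the two-cell estimate $K$ times moves the $L^2$-mass step by step until the terminal uncut cell $Q_\vv{i}^\circ\in\Omega_\sigma^\circ$ is reached, each step contributing one face-jump term; since $K$ is bounded independently of $\sigma$ and $\vv{i}$, the accumulated multiplicative constant stays bounded. This yields
\[
\Vert u_\sigma\Vert_{L^2(Q_\vv{i})}^2 \lesssim \Vert u_\sigma\Vert_{L^2(Q_\vv{i}^\circ)}^2 + \sigma^{2n-1}\sum_{k=1}^{K}\Bigl\Vert\llbracket\partial_{\vv{n}_{F_{\vv{i},k}}}^{n-1}u_\sigma\rrbracket\Bigr\Vert_{L^2(F_{\vv{i},k})}^2 .
\]
Summing over all $Q_\vv{i}\in\Omega_\sigma^\Gamma$ and invoking a finite-overlap argument — each uncut cell serves as endpoint for, and each face belongs to, only a bounded number of paths — bounds the right-hand sides by $\Vert u_\sigma\Vert_{L^2(\Omega_\sigma^\circ)}^2$ and by $\sigma^{2n-1}\sum_{F\in\mathcal{F}_\sigma}\Vert\llbracket\partial_{\vv{n}_F}^{n-1}u_\sigma\rrbracket\Vert_{L^2(F)}^2 = \langle Ju_\sigma,u_\sigma\rangle$, respectively. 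Adding back the uncut contribution gives $\Vert u_\sigma\Vert_{L^2(\Omega_\sigma)}^2 \lesssim \Vert u_\sigma\Vert_{L^2(\Omega_\sigma^\circ)}^2 + C\langle Ju_\sigma,u_\sigma\rangle$ for some $C$ from the chaining; requiring $\varepsilon$ large enough (namely $\varepsilon\gtrsim C$) and using $\langle Ju_\sigma,u_\sigma\rangle\geq 0$ lets one replace $C\langle Ju_\sigma,u_\sigma\rangle$ by $\varepsilon\langle Ju_\sigma,u_\sigma\rangle$ with a constant independent of $\varepsilon$, while ``$\sigma$ sufficiently small'' guarantees that $\Omega_\sigma^\circ$ is non-empty and that the path assumption is in force.

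I expect the two-cell transfer estimate to be the main obstacle: the polynomial factorization and its scaling must reproduce exactly the weight $\sigma^{2n-1}$ built into $J$, and one must verify that the $C^{n-2}$-continuity of tensor-product splines forces the jump to sit solely in the $(n-1)$-st normal derivative. The subsequent globalisation is routine once the bounded-overlap bookkeeping is in place, relying essentially on the uniform bound $K$ from the technical assumption.
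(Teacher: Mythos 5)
Your proposal is correct, but note that the paper itself does not prove this lemma at all: it imports \cref{thm:l2stab} from the literature, citing Lehrenfeld (Lemma~7) and Massing et al.\ (Lemma~5.1). What you have written is therefore a self-contained reconstruction of exactly the kind of argument those references use, specialized to the spline setting, and it goes through. The two-cell transfer estimate is sound: the norm-equivalence step for $\Vert p_2\Vert_{L^2(Q_1)}\lesssim\Vert p_2\Vert_{L^2(Q_2)}$ scales correctly since both sides pick up the same factor $\sigma^{\spdim/2}$, and the Taylor expansion in the normal variable of the polynomial difference $p_1-p_2$ (exact, since the coordinate degree in $x_1$ is at most $n-1$) together with the $C^{n-2}$-continuity of $u_\sigma$ leaves only the term $\tfrac{1}{(n-1)!}\llbracket\pd_1^{n-1}u_\sigma\rrbracket(x')(x_1-c)^{n-1}$, whose $L^2(Q_1)$-norm integrates to exactly the weight $\sigma^{2n-1}$ appearing in $J$. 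This observation is in fact the essential adaptation to the present paper: in the $C^0$ finite-element setting of the cited works the ghost penalty must include jumps of \emph{all} derivative orders, whereas here the high smoothness of tensor-product splines is precisely what makes the paper's single top-order penalty sufficient, so your proof also explains why the paper's definition of $J$ is adequate. Two minor points you should make explicit: first, the finite-overlap property you invoke (each uncut cell and each face is used by boundedly many paths) is not literally part of the paper's assumption, but it does follow from the uniform bound on $K$, because a path of length $K$ stays within grid distance $K$ of its starting cell, so at most $(2K+1)^{\spdim}$ cut cells can route through any given face or terminate at any given uncut cell; second, the chaining multiplies the two-cell constant $K$ times, so the accumulated constant is $C^K$, which is uniform again only thanks to the uniform bound on $K$. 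With these bookkeeping remarks spelled out, your argument is a complete and correct proof of \eqref{eqn:Jstability}.
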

From this one easily obtains that $A_\varepsilon^{-1}$ exists and is bounded as
an operator from $V_\sigma^{-n,2}(\Omega_\sigma)\to V_\sigma^{n,2}(\Omega_\sigma)$.
We will now establish that $A_\varepsilon^{-1}$ also is bounded as an
operator from $V_{\sigma}^{-n,p}(\Omega_\sigma)\to V_\sigma^{n,p}(\Omega_\sigma)$,
$1\leq p\leq\infty$.

\begin{lemma}\label{thm:stability}
For $\sigma>0$ small enough and $\varepsilon>0$ sufficiently large, the approximate
extension operator $A_\varepsilon^{-1}$ is bounded. In other words, for all $f\in
V_\sigma^{-n,p}(\Omega_\sigma)$, $1\leq p\leq\infty$ it holds that:
\begin{equation}\label{eqn:stability}
\Vert A_\varepsilon^{-1}f\Vert_{L^p(\Omega_\sigma)}\lesssim
\Vert f\Vert_{V_\sigma^{-n,p}(\Omega_\sigma)}.
\end{equation}
\end{lemma}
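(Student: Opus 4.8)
The plan is to pass to the B-spline coefficient representation and reduce \eqref{eqn:stability} to a uniform (in $\sigma$) $\ell^p$-bound for the inverse of the matrix of $A_\varepsilon$, which I will extract from the known $L^2$-stability together with the locality of $A$ and $J$. Fix $f\in V_\sigma^{-n,p}(\Omega_\sigma)$ and set $u_\sigma:=A_\varepsilon^{-1}f$, written as $u_\sigma=\sum_{\gv\lambda}\sv{u}_{\gv\lambda}b_{\sigma,\gv\lambda}^n$. Letting $f_\sigma$ be the $L^2$-representative of $f$ from \cref{thm:equivalence}, with coefficients $\sv{f}$, the identity $\langle A_\varepsilon u_\sigma,v_\sigma\rangle=\langle f,v_\sigma\rangle$ tested against all B-splines becomes the linear system $\sm{A}_\varepsilon\sv{u}=\sm{M}\sv{f}$, where $(\sm{A}_\varepsilon)_{\gv\lambda\gv\mu}=\langle A_\varepsilon b_{\sigma,\gv\mu}^n,b_{\sigma,\gv\lambda}^n\rangle$ and $\sm{M}$ is the B-spline mass matrix on $\Omega_\sigma$. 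A direct scaling count shows that both the mass part $A$ and the ghost penalty $J$ contribute entries of size $\sigma^{\spdim}$, so $\sm{A}_\varepsilon=\sigma^{\spdim}\hat{\sm{A}}_\varepsilon$ and $\sm{M}=\sigma^{\spdim}\hat{\sm{M}}$ with $\sigma$-independent reference matrices, and the powers of $\sigma$ cancel: $\sv{u}=\hat{\sm{A}}_\varepsilon^{-1}\hat{\sm{M}}\sv{f}$. By the stability of the B-spline basis \eqref{eqn:Bstab} and the norm equivalence of \cref{thm:equivalence}, proving \eqref{eqn:stability} is equivalent to the uniform bound $\Vert\sv{u}\Vert_{\ell^p}\lesssim\Vert\sv{f}\Vert_{\ell^p}$; since $\hat{\sm{M}}$ is banded with bounded entries and hence $\ell^p$-bounded for all $p$ by Schur's test, it remains to bound $\hat{\sm{A}}_\varepsilon^{-1}$ on $\ell^p$ uniformly.

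Next I would record the two structural properties of $\hat{\sm{A}}_\varepsilon$. It is symmetric, and it is \emph{banded} with bandwidth depending only on $n$ and $\spdim$ but not on $\sigma$: the mass part couples $\gv\lambda,\gv\mu$ only when their supports overlap, and the ghost-penalty part only when both supports meet a common face of $\mathcal{F}_\sigma$, so in either case $\Vert\gv\lambda-\gv\mu\Vert_\infty\lesssim n$. Moreover it is uniformly well-conditioned in $\ell^2$: continuity of $A$ together with \cref{thm:Jcontinuity} at $p=2$ gives $\langle A_\varepsilon u_\sigma,u_\sigma\rangle\lesssim\Vert u_\sigma\Vert_{L^2(\Omega_\sigma)}^2$, while \cref{thm:l2stab} together with $\Omega_\sigma^\circ\subset\Omega$ yields the matching lower bound $\langle A_\varepsilon u_\sigma,u_\sigma\rangle\gtrsim\Vert u_\sigma\Vert_{L^2(\Omega_\sigma)}^2$. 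Via the $L^2$--$\ell^2$ equivalence \eqref{eqn:Bstab} this confines the spectrum of $\hat{\sm{A}}_\varepsilon$ between two positive $\sigma$-independent constants.

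Given a symmetric banded matrix of bounded bandwidth and uniformly bounded condition number, the classical estimate of Demko, Moss and Smith on inverses of band matrices provides $|(\hat{\sm{A}}_\varepsilon^{-1})_{\gv\lambda\gv\mu}|\le C\gamma^{\Vert\gv\lambda-\gv\mu\Vert_\infty}$ with constants $C>0$ and $0<\gamma<1$ that depend only on the bandwidth and the condition number, hence are independent of $\sigma$. Summing this geometric decay over $\mathbb{Z}^{\spdim}$ bounds both the maximal row sum and the maximal column sum of $\hat{\sm{A}}_\varepsilon^{-1}$, so that $\hat{\sm{A}}_\varepsilon^{-1}$ is bounded on $\ell^1$ and on $\ell^\infty$ uniformly; by the Riesz--Thorin interpolation theorem it is then bounded on $\ell^p$ for every $1\leq p\leq\infty$, uniformly in $\sigma$. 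Chaining the estimates gives $\Vert\sv{u}\Vert_{\ell^p}\lesssim\Vert\sv{f}\Vert_{\ell^p}$ and, after restoring the $\sigma^{\spdim/p}$-factors and applying \eqref{eqn:Bstab} and \cref{thm:equivalence} once more, the claimed bound \eqref{eqn:stability}.

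The main obstacle is this last step: obtaining exponential decay of the inverse with a rate independent of $\sigma$. Everything hinges on the two ingredients being uniform — the bandwidth, which comes from the locality of $A$ and $J$ and is where the bounded-reachability assumption on $\mathcal{F}_\sigma$ enters, and the $\ell^2$-condition number, which comes from \cref{thm:l2stab}. I emphasize that because $A$ integrates only over the physical domain $\Omega$ it is genuinely degenerate on ghost cells, so the ghost penalty $J$ is indispensable for the spectral lower bound; once both ingredients are in place, the band-matrix decay and the interpolation are routine.
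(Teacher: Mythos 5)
Your proof is correct, but it takes a genuinely different route from the paper. The paper argues at the level of the function space, in the spirit of Douglas--Dupont--Wahlbin and Crouzeix--Thom\'ee: it localizes the data $f$ to a single cell $Q_{\vv{j}}$, proves by an iteration with cut-off test functions (agreeing with $A_\varepsilon^{-1}f_{\vv{j}}$ outside a growing union of cells and vanishing inside) that the discrete solution decays exponentially away from $Q_{\vv{j}}$, and then sums these cell-wise bounds, treating $p=1$ and $p=\infty$ explicitly and interpolating by Riesz--Thorin in between. You instead work algebraically on the coefficient vectors: you establish that the scaled stiffness matrix is symmetric, banded uniformly in $\sigma$, and uniformly well-conditioned on $\ell^2$ (the lower spectral bound coming, exactly as in the paper, from \cref{thm:l2stab} together with $\Omega_\sigma^\circ\subset\Omega$), and then you outsource the decay to the Demko--Moss--Smith theorem before summing and interpolating between $\ell^1$ and $\ell^\infty$. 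The underlying phenomenon -- exponential decay of the inverse, uniform in $\sigma$ -- is the same in both proofs; the paper derives it by hand, while you cite it as a black box. What your route buys is brevity and the fact that it delivers the paper's subsequent corollary on $\cond_p(\sm{A}_\varepsilon)$ essentially for free; what the paper's route buys is self-containedness and independence from external band-matrix theory. Three small points of care for your version: (i) Demko--Moss--Smith is classically stated for matrices banded over a one-dimensional index set, whereas your index set is a patch of $\mathbb{Z}^\spdim$; the Chebyshev-approximation proof carries over verbatim to lattice-indexed matrices that are banded with respect to $\Vert\cdot\Vert_\infty$, but you should invoke that generalized form explicitly. (ii) After factoring out $\sigma^{\spdim}$ the matrices are not literally $\sigma$-independent (the cut-cell geometry still varies with $\sigma$); only their bandwidth, entry bounds, and spectral bounds are uniform -- which is all your argument actually uses, so this is a matter of wording. (iii) You misattribute where the bounded-reachability assumption on $\mathcal{F}_\sigma$ enters: it plays no role in the bandwidth, which follows automatically from support overlap, but is needed in the proof of \cref{thm:l2stab}, i.e., it enters through the uniform $\ell^2$ condition number.
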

\begin{proof}

Our proof is similar to those of Crouzeix and Thom\'ee~\cite{crouzeix1987} as
well as Douglas, Dupont, and Wahlbin.~\cite{douglas1974} Because of
\cref{thm:equivalence}, it suffices to consider functionals of the form
$\idx{\Omega_\sigma}{}{fv_\sigma}{x}$, $f\in L^p(\Omega_\sigma)$. We fix an
arbitrary $\vv{j}\in\mathbb{Z}^\spdim$ such that $Q_{\vv{j}}\in\Omega_\sigma$.
We set $f_{\vv{j}} = f$ on $Q_\vv{j}$ and  $f_{\vv{j}}\equiv 0$ else and define
$u_{\sigma,\vv{j}}:=A_\varepsilon^{-1}f_{\vv{j}}$. We will show that
$u_{\sigma,\vv{j}}$ decays at an exponential rate away from $Q_{\vv{j}}$. To
this end we define the domains $D_{\vv{j},0}:=\emptyset$, $D_{\vv{j},1}:=Q_{\vv{j}}$,
and $D_{\vv{j},k}:=\bigl\lbrace Q_{\vv{i}}\in\Omega_\sigma\,\bigl|\bigr.\,|\vv{i}-
\vv{j}|<k\bigr\rbrace$ for all other integers $k$, where $|\cdot|$ denotes the
max-norm over $\mathbb{Z}^\spdim$. Furthermore, we set:
\begin{equation}
\mathcal{F}_{\sigma}^{\vv{j},k}:=
\bigl\lbrace
F\in\mathcal{F}_\sigma\,\bigl|\bigr.\,\text{
Both elements that $F$ is a face of are in $\Omega_\sigma\setminus D_{\vv{j},k}$}
\bigr\rbrace.
\end{equation}
First we note that because $f_{\vv{j}}\equiv 0$ outside of $Q_{\vv{j}}$, one has
by the definition of $u_{\sigma,\vv{j}}$ that $\langle A_\varepsilon u_{\sigma,\vv{j}},
v_\sigma\rangle = \langle f_{\vv{j}},v_\sigma\rangle = 0$ for all $v_\sigma\in V_\sigma^n
(\Omega_\sigma)$ that vanish on $Q_{\vv{j}}$. We now choose such a special $v_\sigma$.
Let $k\geq n$ and set the B-spline coefficients of $v_\sigma$ such that $v_\sigma=
u_{\sigma,\vv{j}}$ on $\Omega_\sigma\setminus D_{\vv{j},k}$ and set the remaining
coefficients to zero. It follows that $v_\sigma\equiv 0$ on $D_{\vv{j},k-(n-1)}$.
Because $\langle A_\varepsilon u_{\sigma,\vv{j}},v_\sigma\rangle = 0$ one
easily obtains that:
\begin{multline}
\idx{\Omega\setminus D_{\vv{j},k}}{}{u_{\sigma,\vv{j}}^2}{\vv{x}} +
\varepsilon\sigma^{2n-1}\sum_{F\in\mathcal{F}_{\sigma}^{\vv{j},k}}
\idx{F}{}{\left\llbracket
\pdx{^{n-1}u_{\sigma,\vv{j}}}{\vv{n}_{F}^{n-1}}\right\rrbracket^2}{S} = \\
-\left(
\idx{\Omega\cap D_{\vv{j},k}}{}{u_{\sigma,\vv{j}}v_\sigma}{\vv{x}} +
\varepsilon\sigma^{2n-1}
\sum_{F\in\mathcal{F}_\sigma\setminus\mathcal{F}_{\sigma}^{\vv{j},k}}
\idx{F}{}{
\left\llbracket \pdx{^{n-1}u_{\sigma,\vv{j}}}{\vv{n}_{F}^{n-1}}\right\rrbracket
\left\llbracket \pdx{^{n-1}v_\sigma}{\vv{n}_{F}^{n-1}}\right\rrbracket}{S}
\right).
\end{multline}
Because of \cref{thm:l2stab} the left side of this equality can be bounded
from below by $\Vert u_{\sigma,\vv{j}}\Vert_{L^2(\Omega_\sigma\setminus
D_{\vv{j},k})}^2$. Because $v_\sigma\equiv 0$ on $D_{\vv{j},k-(n-1)}$, the
integral on the right can be upper bounded by $\Vert u_{\sigma,\vv{j}}
\Vert_{L^2(D_{\vv{j},k}\setminus D_{\vv{j},k-(n-1)})}\Vert v_{\sigma}\Vert_{%
L^2(D_{\vv{j},k}\setminus D_{\vv{j},k-(n-1)})}$. The same bound follows for the
sum, using the arguments in the proof of \cref{thm:Jcontinuity}. In fact, for
most choices of $\vv{j}$ and $k$, this sum is empty. But clearly, by the stability
of the B-spline basis, we have $\Vert v_{\sigma}\Vert_{L^2(D_{\vv{j},k}
\setminus D_{\vv{j},k-(n-1)})}\lesssim \Vert u_{\sigma,\vv{j}}\Vert_{L^2(D_{\vv{j},k}
\setminus D_{\vv{j},k-(n-1)})}$. Thus, in total we obtain the existence of a
constant $C>0$ such that:
\begin{multline}
\Vert u_{\sigma,\vv{j}}\Vert_{L^2(\Omega_\sigma\setminus D_{\vv{j},k})}^2
\leq 
C\Vert u_{\sigma,\vv{j}}\Vert_{L^2(D_{\vv{j},k}\setminus D_{\vv{j},k-(n-1)})}^2 \\
=C\left(
\Vert u_{\sigma,\vv{j}}\Vert_{L^2(\Omega_\sigma\setminus D_{\vv{j},k-(n-1)})}^2 -
\Vert u_{\sigma,\vv{j}}\Vert_{L^2(\Omega_\sigma\setminus D_{\vv{j},k})}^2
\right),
\end{multline}
and therefore:
\begin{equation}
\Vert u_{\sigma,\vv{j}}\Vert_{L^2(\Omega_\sigma\setminus D_{\vv{j},k})}^2
\leq\frac{C}{1+C}
\Vert u_{\sigma,\vv{j}}\Vert_{L^2(\Omega_\sigma\setminus D_{\vv{j},k-(n-1)})}^2.
\end{equation}
For large values of $k$ this argument can now be repeated on the right hand
side, leading to the existence of another constant $0<\gamma<1$ such that
\begin{math}
\Vert u_{\sigma,\vv{j}}\Vert_{L^2(\Omega_\sigma\setminus D_{\vv{j},k})}^2
\lesssim \gamma^{2k} \Vert u_{\sigma,\vv{j}}\Vert_{L^2(\Omega_\sigma)}^2.
\end{math}
This is the desired exponential decay. Using \cref{thm:l2stab}, we get together
with the inverse estimates:
\begin{multline}
\Vert u_{\sigma,\vv{j}}\Vert_{L^2(\Omega_\sigma)}^2
\stackrel{\cref{eqn:Jstability}}{\lesssim}
\langle A_\varepsilon u_{\sigma,\vv{j}},u_{\sigma,\vv{j}}\rangle =
\idx{Q_{\vv{j}}}{}{fu_{\sigma,\vv{j}}}{x} \leq
\Vert f\Vert_{L^p(Q_{\vv{j}})}\Vert u_{\sigma,\vv{j}}\Vert_{L^q(Q_{\vv{j}})} \\
\stackrel{\cref{eqn:localinverseineq}}{\lesssim}\sigma^{\frac{\spdim}{2}-\frac{\spdim}{p}}
\Vert f\Vert_{L^p(Q_{\vv{j}})}\Vert u_{\sigma,\vv{j}}\Vert_{L^2(Q_{\vv{j}})}
\leq\sigma^{\frac{\spdim}{2}-\frac{\spdim}{p}}
\Vert f\Vert_{L^p(Q_{\vv{j}})}\Vert u_{\sigma,\vv{j}}\Vert_{L^2(\Omega_\sigma)},
\end{multline}
and thus $\Vert u_{\sigma,\vv{j}}\Vert_{L^2(\Omega_\sigma)}\lesssim
\sigma^{\frac{\spdim}{2}-\frac{\spdim}{p}}\Vert f\Vert_{L^p(Q_{\vv{j}})}$. For
every $\vv{i}\in\mathbb{Z}^\spdim$, $Q_{\vv{i}}\in\Omega_\sigma$ this leads to:
\begin{equation}
\Vert u_{\sigma,\vv{j}}\Vert_{L^p(Q_{\vv{i}})}
\stackrel{\cref{eqn:localinverseineq}}{\lesssim}\sigma^{\frac{\spdim}{p}-\frac{\spdim}{2}}
\Vert u_{\sigma,\vv{j}}\Vert_{L^2(Q_{\vv{i}})}
\lesssim \sigma^{\frac{\spdim}{p}-\frac{\spdim}{2}}
\gamma^{\vert\vv{i}-\vv{j}\vert}\Vert u_{\sigma,\vv{j}}\Vert_{L^2(\Omega_\sigma)}
\lesssim
\gamma^{\vert\vv{i}-\vv{j}\vert}\Vert f\Vert_{L^p(Q_{\vv{j}})}.
\end{equation}

Let consider the case $p=\infty$. Noting that $u_\sigma:= A_\varepsilon^{-1}f
= \sum_{\vv{j}}u_{\sigma,\vv{j}}$ we obtain by the triangular inequality for
arbitrary $\vv{i}$:
\begin{equation}
\Vert u_\sigma\Vert_{L^\infty(Q_{\vv{i}})} \lesssim
\Vert f\Vert_{L^\infty(\Omega_\sigma)}\sum_{\vv{j}}\gamma^{|\vv{i}-\vv{j}|}.
\end{equation}
Because of the grid's uniformity and the exponential decay, the latter sum
remains bounded for any $\vv{i}$, and therefore $\Vert u_\sigma\Vert_{L^\infty(\Omega_\sigma)}
\lesssim \Vert f\Vert_{L^\infty(\Omega_\sigma)}$. For $p=1$ we obtain similarly:
\begin{equation}
\Vert u_\sigma\Vert_{L^1(\Omega_\sigma)} =
\sum_{\vv{i}}\Vert u_\sigma\Vert_{L^1(Q_{\vv{i}})} \lesssim
\sum_{\vv{i,j}}\gamma^{\vert\vv{i}-\vv{j}\vert}\Vert f\Vert_{L^1(Q_{\vv{j}})}
\stackrel{\text{H\"older}}{\leq}\Vert f\Vert_{L^1(\Omega_\sigma)}
\max_{\vv{j}}\sum_{\vv{i}}\gamma^{\vert\vv{i}-\vv{j}\vert}
\end{equation}
and thus $\Vert u_\sigma\Vert_{L^1(\Omega_\sigma)}\lesssim
\Vert f\Vert_{L^1(\Omega_\sigma)}$. For $1<p<\infty$ the result now follows by the
Riesz--Thorin interpolation theorem.
\end{proof}

\subsection{Condition Numbers}
In order to implement the approximate extension operator in practice, it is
important that the condition number of the corresponding system matrix
$\sm{A}_\varepsilon$ remains bounded. Let us abbreviate $N:=\dim V_\sigma^{n,p}
(\Omega_\sigma)$. We may assign a numbering $1,\ldots,N$ to the index set
$\Lambda_{\sigma}^{n}(\Omega_\sigma)$ and refer to the B-splines $b_{\sigma,
\gv{\lambda}}^n$ as $b_i$, $i\in\lbrace 1,\ldots, N\rbrace$. The system matrix
$\sm{A}_\varepsilon\in\mathbb{R}^{N\times N}$ is then defined via:
\begin{equation}
\sm{e}_j^\top\sm{A}_{\varepsilon}\sm{e}_i = \langle A_\varepsilon b_i, b_j\rangle
\qquad \forall i,j\in\lbrace 1,\ldots, N\rbrace,
\end{equation}
where $\sm{e}_i, \sm{e}_j\in\mathbb{R}^N$ refer to the $i$th and $j$th
Cartesian basis vectors, respectively. One easily obtains the following
corollary, which guarantees that systems involving $\sm{A}_\varepsilon$
can efficiently be solved using iterative solvers. 
\begin{corollary}[Condition of $\sm{A}_\varepsilon$]
The system matrix $\sm{A}_\varepsilon\in\mathbb{R}^{N\times N}$ is symmetric
$\sm{A}_\varepsilon = \sm{A}_\varepsilon^\top$, positive definite:
\begin{equation}
\sm{u}_\sigma^\top\sm{A}_\varepsilon\sm{u}_\sigma
\gtrsim
\sigma^{\spdim}\Vert\sm{u}_\sigma\Vert_{\ell^2}^2
\qquad\forall\sm{u}_\sigma\in\mathbb{R}^N,
\end{equation}
and well-conditioned:
\begin{multline}
\forall\sm{u}_\sigma\in\mathbb{R}^N:\ 
\sigma^\spdim\Vert\sm{u}_\sigma\Vert_{\ell^p}
\lesssim
\Vert\sm{A}_{\varepsilon}\sm{u}_\sigma\Vert_{\ell^p}
\lesssim
\sigma^\spdim\Vert\sm{u}_\sigma\Vert_{\ell^p} \\
\Longrightarrow
\cond_p(\sm{A}_{\varepsilon}) =
\Vert\sm{A}_{\varepsilon}\Vert_{\ell^p\to\ell^p}
\Vert \sm{A}_{\varepsilon}^{-1}\Vert_{\ell^p\to\ell^p} \sim 1.
\end{multline}
\end{corollary}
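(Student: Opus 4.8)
The plan is to dispatch the three assertions in order, extracting the conditioning estimate from the mapping properties of $A_\varepsilon$ already available through \cref{thm:Jcontinuity} and \cref{thm:stability}. Symmetry is immediate: the form $\langle A u_\sigma, v_\sigma\rangle = \idx{\Omega}{}{u_\sigma v_\sigma}{\vv{x}}$ is manifestly symmetric, and every summand $\llbracket\pd^{n-1}_{\vv{n}_F}u_\sigma\rrbracket\llbracket\pd^{n-1}_{\vv{n}_F}v_\sigma\rrbracket$ defining $J$ is symmetric in $u_\sigma$ and $v_\sigma$; hence $\sm{e}_j^\top\sm{A}_\varepsilon\sm{e}_i = \langle A_\varepsilon b_i,b_j\rangle = \langle A_\varepsilon b_j,b_i\rangle = \sm{e}_i^\top\sm{A}_\varepsilon\sm{e}_j$.

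For positive definiteness I would write $u_\sigma = \sum_i \sm{u}_{\sigma,i}b_i$ and observe that $\sm{u}_\sigma^\top\sm{A}_\varepsilon\sm{u}_\sigma = \langle A_\varepsilon u_\sigma,u_\sigma\rangle = \Vert u_\sigma\Vert_{L^2(\Omega)}^2 + \varepsilon\langle Ju_\sigma,u_\sigma\rangle$. Since $\Omega_\sigma^\circ\subset\Omega$ we have $\Vert u_\sigma\Vert_{L^2(\Omega_\sigma^\circ)}^2\leq\Vert u_\sigma\Vert_{L^2(\Omega)}^2$, so \cref{thm:l2stab} yields $\Vert u_\sigma\Vert_{L^2(\Omega_\sigma)}^2\lesssim\sm{u}_\sigma^\top\sm{A}_\varepsilon\sm{u}_\sigma$; the norm equivalence \eqref{eqn:Bstab} then replaces the left-hand side by $\sigma^{\spdim}\Vert\sm{u}_\sigma\Vert_{\ell^2}^2$, which is the stated bound.

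The core of the proof is the two-sided estimate on $\Vert\sm{A}_\varepsilon\sm{u}_\sigma\Vert_{\ell^p}$, and the device I would rely on is a dictionary between the functional norm on $V_\sigma^{-n,p}(\Omega_\sigma)$ and the $\ell^p$-norm of the coefficient vector. Setting $f:=A_\varepsilon u_\sigma$, the $j$-th entry of $\sm{A}_\varepsilon\sm{u}_\sigma$ is $\langle f,b_j\rangle$, so testing against $v_\sigma = \sum_j\sm{v}_{\sigma,j}b_j$ gives $\langle f,v_\sigma\rangle = \sm{v}_\sigma^\top(\sm{A}_\varepsilon\sm{u}_\sigma)$. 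Inserting the B-spline stability \eqref{eqn:Bstab} for the dual exponent $q$, namely $\Vert v_\sigma\Vert_{L^q(\Omega_\sigma)}\sim\sigma^{\spdim/q}\Vert\sm{v}_\sigma\Vert_{\ell^q}$, into the definition of the functional norm and then invoking $\ell^p$--$\ell^q$ duality for the supremum over $\sm{v}_\sigma$, I expect the equivalence $\Vert A_\varepsilon u_\sigma\Vert_{V_\sigma^{-n,p}(\Omega_\sigma)}\sim\sigma^{-\spdim/q}\Vert\sm{A}_\varepsilon\sm{u}_\sigma\Vert_{\ell^p}$.

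With this dictionary, the upper bound follows from continuity of $A_\varepsilon = A + \varepsilon J$ (boundedness of $A$ is immediate from Hölder, that of $J$ is \cref{thm:Jcontinuity}): $\Vert A_\varepsilon u_\sigma\Vert_{V_\sigma^{-n,p}}\lesssim\Vert u_\sigma\Vert_{L^p(\Omega_\sigma)}\sim\sigma^{\spdim/p}\Vert\sm{u}_\sigma\Vert_{\ell^p}$, so the two scaling factors combine to $\sigma^{\spdim/q+\spdim/p}=\sigma^{\spdim}$. The lower bound is the dual statement via \cref{thm:stability}: writing $u_\sigma = A_\varepsilon^{-1}(A_\varepsilon u_\sigma)$ gives $\sigma^{\spdim/p}\Vert\sm{u}_\sigma\Vert_{\ell^p}\sim\Vert u_\sigma\Vert_{L^p(\Omega_\sigma)}\lesssim\Vert A_\varepsilon u_\sigma\Vert_{V_\sigma^{-n,p}}\sim\sigma^{-\spdim/q}\Vert\sm{A}_\varepsilon\sm{u}_\sigma\Vert_{\ell^p}$, again producing $\sigma^{\spdim}$ once $\tfrac1p+\tfrac1q=1$ is used. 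Dividing through then gives $\Vert\sm{A}_\varepsilon\Vert_{\ell^p\to\ell^p}\lesssim\sigma^{\spdim}$ and $\Vert\sm{A}_\varepsilon^{-1}\Vert_{\ell^p\to\ell^p}\lesssim\sigma^{-\spdim}$, whose product is bounded; since any induced condition number is at least one, $\cond_p(\sm{A}_\varepsilon)\sim 1$. The one delicate point to watch is precisely this bookkeeping of $\sigma$-powers: the scaling exponents on the two sides of the dictionary differ exactly by $\spdim/p+\spdim/q=\spdim$, which is what makes the $\sigma^{\spdim}$ factors cancel and renders the condition number $\sigma$-independent. Everything else is an assembly of results already established.
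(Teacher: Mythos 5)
Your proof is correct and takes essentially the same route as the paper's: positive definiteness via \cref{thm:l2stab} together with \eqref{eqn:Bstab}, the upper bound from the continuity of $A_\varepsilon$ (H\"older for $A$ plus \cref{thm:Jcontinuity}), and the lower bound from \cref{thm:stability}, with the B-spline stability \eqref{eqn:Bstab} and $\ell^p$--$\ell^q$ duality mediating between functional norms and coefficient norms. The only difference is organizational: you isolate the two-sided dictionary $\Vert A_\varepsilon u_\sigma\Vert_{V_\sigma^{-n,p}(\Omega_\sigma)}\sim\sigma^{-\spdim/q}\Vert\sm{A}_\varepsilon\sm{u}_\sigma\Vert_{\ell^p}$ as a standalone step (and spell out the $\Omega_\sigma^\circ\subset\Omega$ reasoning behind positive definiteness), whereas the paper inlines the corresponding one-sided estimates within each chain of inequalities.
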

\begin{proof}
The symmetry of $\sm{A}_\varepsilon$ is obvious. With every $\sm{u}_\sigma\in\mathbb{R}^N$
we associate $u_\sigma = \sum_{i=1}^{N}u_ib_i$. Then, with help of the stability of the
B-spline basis:
\begin{equation}
\sm{u}_\sigma^\top\sm{A}_\varepsilon\sm{u}_\sigma =
\langle A_\varepsilon u_\sigma, u_\sigma\rangle
\stackrel{\eqref{eqn:Jstability}}{\gtrsim}
\Vert u_\sigma\Vert_{L^2(\Omega_\sigma)}^2
\stackrel{\eqref{eqn:Bstab}}{\gtrsim}
\sigma^{\spdim}\Vert\sm{u}_\sigma\Vert_{\ell^2}^2.
\end{equation}
Moreover, for the lower inequality:
\begin{multline}
\sigma^\spdim\Vert\sm{u}_\sigma\Vert_{\ell^p}
\stackrel{\eqref{eqn:Bstab}}{\lesssim}
\sigma^{\frac{\spdim}{q}}\Vert u_\sigma\Vert_{L^p(\Omega_\sigma)}
\stackrel{\eqref{eqn:stability}}{\lesssim}
\sigma^{\frac{\spdim}{q}}\Vert A_\varepsilon u_\sigma\Vert_{V^{-n,p}(\Omega_\sigma)} \\
= \sigma^{\frac{\spdim}{q}}\sup_{v_\sigma\in V_{\sigma}^{n,q}(\Omega_\sigma)}
\frac{\langle A_\varepsilon u_\sigma, v_\sigma\rangle}{\Vert v_\sigma\Vert_{L^q(\Omega)}}
\stackrel{\eqref{eqn:Bstab}}{\lesssim}
\sigma^{\frac{\spdim}{q}}\sup_{v_\sigma\in V_{\sigma}^{n,q}(\Omega_\sigma)}
\frac{\sm{v}_\sigma^\top \sm{A}_\varepsilon\sm{u}_\sigma}{\sigma^{\frac{\spdim}{q}}\Vert\sm{v}_\sigma\Vert_{\ell^q}}
=
\Vert\sm{A}_\varepsilon\sm{u}_\sigma\Vert_{\ell^p}.
\end{multline}
Similarly, for the upper inequality:
\begin{multline}
\Vert\sm{A}_\varepsilon\sm{u}_\sigma\Vert_{\ell^p} =
\sup_{\sm{v}_\sigma\in\mathbb{R}^N}
\frac{\sm{v}_\sigma^\top\sm{A}_\varepsilon\sm{u}_\sigma}{\Vert\sm{v}_\sigma\Vert_{\ell^q}}
\stackrel{\eqref{eqn:Bstab}}{\lesssim}
\sup_{\sm{v}_\sigma\in\mathbb{R}^N}\frac{\langle A_\varepsilon u_\sigma,v_\sigma\rangle}%
{\sigma^{-\frac{\spdim}{q}}\Vert v_\sigma\Vert_{L^q(\Omega_\sigma)}} \\
\stackrel{\eqref{eqn:Jcontinuity}}{\lesssim}
\sigma^{\frac{\spdim}{q}}\Vert u_\sigma\Vert_{L^p(\Omega_\sigma)}
\stackrel{\eqref{eqn:Bstab}}{\lesssim}\sigma^{\spdim}\Vert\sm{u}_\sigma\Vert_{\ell^p}.
\end{multline}
\end{proof}

\subsection{Convergence}
Every $u_\Omega\in L^p(\Omega)$ may be interpreted as an element of
$V_\sigma^{-n,p}(\Omega_\sigma)$ by setting 
\begin{equation}
\langle u_\Omega, v_\sigma \rangle := \idx{\Omega}{}{u_\Omega v_\sigma}{\vv{x}} \qquad
\forall v_\sigma\in V_\sigma^{n,q}(\Omega_\sigma).
\end{equation}
Similarly, any element of $W^{-s,p}(\Omega)$, $0 < s < n - \tfrac{1}{p}$, can
be interpreted as an element of $V_\sigma^{-n,p}(\Omega_\sigma)$ by restricting
the test functions from $V_\sigma^{n,q}(\Omega_\sigma)\subset W^{s,q}(\Omega_\sigma)$
to $\Omega$. We now prove that $A_\varepsilon^{-1}u_\Omega$ converges to the Stein
extension on the entire fictitious domain $\Omega_\sigma$ at an optimal rate.
\begin{theorem}[Approximate Extension]\label{thm:convergence}
Let $n\in\mathbb{N}$, $k,l\in\mathbb{N}_0$, $u_\Omega\in W^{k,p}(\Omega)$,
$0\leq k\leq n$, $1\leq p\leq\infty$. Let $\sigma>0$ be sufficiently small
and $\varepsilon>0$ big enough. Then the approximate extension operator
$A_\varepsilon^{-1}$ fulfills:
\begin{align}\label{eqn:Aconsistency}
\Vert Eu_\Omega - A_\varepsilon^{-1}u_\Omega\Vert_{W^{l,p}(\Omega_\sigma)}
&\lesssim \sigma^{k-l}\Vert u_\Omega\Vert_{W^{k,p}(\Omega)} &
0&\leq l\leq\min\lbrace k,n-1\rbrace, \\
\Vert A_\varepsilon^{-1}u_\Omega\Vert_{W^{l,p}(\Omega_\sigma)}
&\lesssim \sigma^{k-l}\Vert u_\Omega\Vert_{W^{k,p}(\Omega)} &
k&\leq l\leq n-1.\label{eqn:Acontinuity}
\end{align}
The hidden constant is independent of $\sigma$, $u_\Omega$, and how $\pd\Omega$
intersects the Cartesian grid. If one interprets the norms on the left side of
the inequalities in the broken, element-wise sense, they also remain true for
$l=n$.
\end{theorem}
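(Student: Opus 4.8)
The plan is to compare $u_\sigma := A_\varepsilon^{-1}u_\Omega\in V_\sigma^{n}(\Omega_\sigma)$ against the quasi-interpolant $w_\sigma := P_\sigma^n E u_\Omega$ of the Stein extension, and to estimate their difference $e_\sigma := u_\sigma - w_\sigma\in V_\sigma^{n}(\Omega_\sigma)$ in $L^p(\Omega_\sigma)$ by means of the $L^p$-stability of $A_\varepsilon^{-1}$ established in \cref{thm:stability}. Both stated families of inequalities then follow by the triangle inequality together with the approximation and inverse estimates for splines. Since $E u_\Omega\in W^{k,p}(\wholespace)$ satisfies $\Vert E u_\Omega\Vert_{W^{k,p}(\Omega_\sigma)}\lesssim\Vert u_\Omega\Vert_{W^{k,p}(\Omega)}$, the Stein extension lets us convert every norm of $E u_\Omega$ over $\Omega_\sigma$ back into the desired norm of $u_\Omega$ over $\Omega$.

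The central step is to bound $\Vert e_\sigma\Vert_{L^p(\Omega_\sigma)}$. By \cref{thm:stability} it suffices to control $\Vert A_\varepsilon e_\sigma\Vert_{V_\sigma^{-n,p}(\Omega_\sigma)}$, i.e., the residual $\langle A_\varepsilon e_\sigma, v_\sigma\rangle$ for $v_\sigma\in V_\sigma^{n,q}(\Omega_\sigma)$. By definition of $u_\sigma$ one has $\langle A_\varepsilon u_\sigma, v_\sigma\rangle=\langle u_\Omega,v_\sigma\rangle=\idx{\Omega}{}{u_\Omega v_\sigma}{\vv{x}}$, so that, using $u_\Omega = E u_\Omega$ on $\Omega$,
\[
\langle A_\varepsilon e_\sigma, v_\sigma\rangle = \idx{\Omega}{}{(E u_\Omega - w_\sigma)v_\sigma}{\vv{x}} - \varepsilon\langle J w_\sigma, v_\sigma\rangle.
\]
The first term is bounded via H\"older by $\Vert E u_\Omega - w_\sigma\Vert_{L^p(\Omega_\sigma)}\Vert v_\sigma\Vert_{L^q(\Omega_\sigma)}$, and the consistency estimate \eqref{eqn:Pconsistency} with $l=0$ (summed over the finite overlap) gives $\Vert E u_\Omega - w_\sigma\Vert_{L^p(\Omega_\sigma)}\lesssim\sigma^k\Vert u_\Omega\Vert_{W^{k,p}(\Omega)}$; the second term is controlled precisely by the ghost-penalty consistency \eqref{eqn:Jconsistency} applied to $E u_\Omega$ with $s=k$, yielding $\varepsilon\Vert J w_\sigma\Vert_{V^{-n,p}(\Omega_\sigma)}\lesssim\sigma^k\Vert E u_\Omega\Vert_{W^{k,p}(\Omega_\sigma)}\lesssim\sigma^k\Vert u_\Omega\Vert_{W^{k,p}(\Omega)}$. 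Taking the supremum over $v_\sigma$ gives $\Vert e_\sigma\Vert_{L^p(\Omega_\sigma)}\lesssim\sigma^k\Vert u_\Omega\Vert_{W^{k,p}(\Omega)}$.

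With this at hand the two families of estimates are immediate. For the consistency bound \eqref{eqn:Aconsistency}, $0\leq l\leq\min\lbrace k,n-1\rbrace$, I split $\Vert E u_\Omega - u_\sigma\Vert_{W^{l,p}(\Omega_\sigma)}\leq\Vert E u_\Omega - w_\sigma\Vert_{W^{l,p}(\Omega_\sigma)}+\Vert e_\sigma\Vert_{W^{l,p}(\Omega_\sigma)}$, bounding the first term by $\sigma^{k-l}\Vert u_\Omega\Vert_{W^{k,p}(\Omega)}$ via \eqref{eqn:Pconsistency}, and the second by the global inverse estimate \eqref{eqn:globalinverseineqrs} (with $r=0$, admissible since $l\leq n-1$), which turns the $L^p$-bound on $e_\sigma$ into $\sigma^{-l}\sigma^{k}\Vert u_\Omega\Vert_{W^{k,p}(\Omega)}$. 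For the continuity bound \eqref{eqn:Acontinuity}, $k\leq l\leq n-1$, I split $\Vert u_\sigma\Vert_{W^{l,p}(\Omega_\sigma)}\leq\Vert e_\sigma\Vert_{W^{l,p}(\Omega_\sigma)}+\Vert w_\sigma\Vert_{W^{l,p}(\Omega_\sigma)}$, bounding the first term exactly as before and the second by the quasi-interpolant continuity \eqref{eqn:Pcontinuity}. The broken-norm statement at $l=n$ follows verbatim, replacing the global inverse estimate \eqref{eqn:globalinverseineqrs} (unavailable at $l=n$) by its local counterpart \eqref{eqn:localinverseineq} summed element-wise, and noting that \eqref{eqn:Pcontinuity} already covers $l=n$.

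The heavy lifting has already been carried out in \cref{thm:stability}; granting that $L^p$-stability of $A_\varepsilon^{-1}$, the remainder is a Galerkin-type duality argument. The only genuine subtleties are, first, that $A$ integrates over $\Omega$ rather than over $\Omega_\sigma$ — handled by invoking $E u_\Omega = u_\Omega$ on $\Omega$ and then enlarging the integration domain to $\Omega_\sigma$ inside H\"older's inequality — and, second, that the stabilization term $\varepsilon J w_\sigma$ must not degrade the convergence order, which is exactly what \eqref{eqn:Jconsistency} guarantees. I expect the bookkeeping for the $l=n$ broken case to be the most error-prone part, since there one must carefully pass from the global to the element-wise inverse estimates.
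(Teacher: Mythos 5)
Your proposal is correct and takes essentially the same approach as the paper: you compare $A_\varepsilon^{-1}u_\Omega$ with the quasi-interpolant $P_\sigma^n E u_\Omega$ of the Stein extension, invoke the $L^p$-stability of $A_\varepsilon^{-1}$ from \cref{thm:stability} to reduce everything to the dual-norm residual, and bound that residual exactly as the paper does via \eqref{eqn:Pconsistency} and the ghost-penalty consistency \eqref{eqn:Jconsistency}, finishing with inverse estimates. The only cosmetic difference is in \eqref{eqn:Acontinuity}, where the paper applies the inverse estimate to $A_\varepsilon^{-1}u_\Omega$ itself and then uses \eqref{eqn:Aconsistency} at $l=k$ together with the continuity of $E$, whereas you use the triangle inequality with $P_\sigma^n E u_\Omega$ and \eqref{eqn:Pcontinuity}; both are routine and equivalent in substance.
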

\begin{proof}
Let us first consider \eqref{eqn:Aconsistency} and note that
\begin{equation}
\Vert Eu_\Omega-A_\varepsilon^{-1}u_\Omega\Vert_{W^{l,p}(\Omega_\sigma)} \leq
\Vert Eu_\Omega-PEu_\Omega\Vert_{W^{l,p}(\Omega_\sigma)} +
\Vert PEu_\Omega-A_\varepsilon^{-1}u_\Omega\Vert_{W^{l,p}(\Omega_\sigma)},
\end{equation}
where we abbreviated $P=P_\sigma^n$. The first term can be bounded as desired
by \eqref{eqn:Pconsistency} and the continuity of the Stein extension. For the
second term it suffices to consider the case $l = 0$, the remaining cases then
follow by the inverse estimates \eqref{eqn:globalinverseineqrs}. Thus:
\begin{multline}
\Vert PEu_\Omega-A_\varepsilon^{-1}u_\Omega\Vert_{L^p(\Omega_\sigma)} =
\bigl\Vert A_\varepsilon^{-1}A_\varepsilon\bigl(PEu_\Omega-A_\varepsilon^{-1}u_\Omega\bigr)\bigr\Vert_{L^p(\Omega_\sigma)}
\stackrel{\eqref{eqn:stability}}{\lesssim} \\
\bigl\Vert A_\varepsilon\bigl(PEu_\Omega-A_\varepsilon^{-1}u_\Omega\bigr)\bigr\Vert_{V^{-n,p}(\Omega_\sigma)} =
\bigl\Vert A_\varepsilon PEu_\Omega-u_\Omega\bigr\Vert_{V^{-n,p}(\Omega_\sigma)}.
\end{multline}
For this last term, we obtain for arbitrary $v_\sigma\in V_\sigma^{n,q}(\Omega_\sigma)$:
\begin{equation}
\langle A_\varepsilon PEu_\Omega-u_\Omega, v_\sigma\rangle =
\idx{\Omega}{}{\bigl(PEu_\Omega-u_\Omega\bigr)v_\sigma}{\vv{x}} +
\varepsilon\langle JPEu_\Omega,v_\sigma\rangle.
\end{equation}
With the help of Hölder's inequality, \cref{eqn:Pconsistency}, and the
boundedness of the  Stein extension operator, the integral can be bounded by
$\sigma^{k}\Vert u_\Omega\Vert_{W^{k,p}(\Omega)}\Vert v_\sigma\Vert_{L^q(\Omega_\sigma)}$.
The same bound follows for the second term by \cref{eqn:Jconsistency}. Thus
$\bigl\Vert A_\varepsilon PEu_\Omega-u_\Omega\bigr\Vert_{V^{-n,p}(\Omega_\sigma)}
\lesssim\sigma^k\Vert u_\Omega\Vert_{W^{n,p}(\Omega)}$ as desired. For
\eqref{eqn:Acontinuity} we now obtain:
\begin{multline}
\Vert A_\varepsilon^{-1}u_\Omega\Vert_{W^{l,p}(\Omega_\sigma)}
\stackrel{\eqref{eqn:globalinverseineqrs}}{\lesssim}
\sigma^{k-l}\Vert A_\varepsilon^{-1}u_\Omega\Vert_{W^{k,p}(\Omega_\sigma)} \\
\leq \sigma^{k-l} \bigl(\Vert A_\varepsilon^{-1}u_\Omega-Eu_\Omega\Vert_{W^{k,p}(\Omega_\sigma)}
+ \Vert Eu_\Omega\Vert_{W^{k,p}(\Omega_\sigma)}\bigr),
\end{multline}
where the first term can now be bounded as desired by \eqref{eqn:Aconsistency}
and the second by the continuity of the Stein extension.
\end{proof}
Every function in $V_\sigma^n(\qdom)$ can be extended to $\wholespace$ by
simply removing the restriction on the B-splines it is composed of. Because of
\eqref{eqn:Bstab}, one also has $\Vert A_\varepsilon^{-1}u_\Omega\Vert_{L^p
(\wholespace)}\lesssim \Vert u_\Omega\Vert_{L^p(\Omega)}$. When considered only
on the domain $\Omega$, on the other hand, we also obtain the following super-%
convergence result.
\begin{corollary}[Super-Convergence]\label{thm:superconvergence}
Under the same conditions as the previous theorem we have
for all $l\in\mathbb{Z}$, $-n\leq l\leq \min\lbrace k,n-1\rbrace$:
\begin{equation}
\Vert u_\Omega - A_\varepsilon^{-1}u_\Omega\Vert_{W^{l,p}(\Omega)}
\lesssim
\sigma^{k-l}\Vert u_\Omega\Vert_{W^{k,p}(\Omega)},
\end{equation}
The hidden constant is independent of $\sigma$, $u_\Omega$, and how $\pd\Omega$ 
intersects the Cartesian grid. If one interprets the norm on the left in the
broken, element-wise sense, the statement also remains true for $l=n$.
\end{corollary}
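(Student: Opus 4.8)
The plan is to split the claimed range of $l$ into the nonnegative part $0\leq l\leq\min\{k,n-1\}$ (together with $l=n$ in the broken sense), which I expect to be essentially immediate, and the genuinely new negative part $-n\leq l\leq-1$, where the super-convergence must be extracted by a duality argument. For nonnegative $l$ I would simply note that $Eu_\Omega=u_\Omega$ on $\Omega$ and that $\Omega\subset\Omega_\sigma$, so that $\Vert u_\Omega-A_\varepsilon^{-1}u_\Omega\Vert_{W^{l,p}(\Omega)}=\Vert Eu_\Omega-A_\varepsilon^{-1}u_\Omega\Vert_{W^{l,p}(\Omega)}\leq\Vert Eu_\Omega-A_\varepsilon^{-1}u_\Omega\Vert_{W^{l,p}(\Omega_\sigma)}$, and invoke \eqref{eqn:Aconsistency} (and its broken-norm variant for $l=n$). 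Nothing further is needed there.

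For $-n\leq l\leq-1$ I set $s:=-l\in\{1,\dots,n\}$ and abbreviate $w_\sigma:=A_\varepsilon^{-1}u_\Omega$. Since $s$ is a positive integer, $W^{-s,p}(\Omega)=(W^{s,q}(\Omega))'$ by definition, so I would estimate $\Vert u_\Omega-w_\sigma\Vert_{W^{-s,p}(\Omega)}$ by controlling $\int_\Omega(u_\Omega-w_\sigma)\varphi\,\dd\vv{x}$ uniformly over $\varphi\in W^{s,q}(\Omega)$ with $\Vert\varphi\Vert_{W^{s,q}(\Omega)}\leq1$. The structural input is the defining (Galerkin-type) relation for $w_\sigma$: testing $A_\varepsilon w_\sigma=u_\Omega$ against $v_\sigma\in V_\sigma^{n,q}(\Omega_\sigma)$ gives $\int_\Omega(u_\Omega-w_\sigma)v_\sigma\,\dd\vv{x}=\varepsilon\langle Jw_\sigma,v_\sigma\rangle$. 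Choosing the comparison function $v_\sigma:=P_\sigma^nE\varphi$ then yields
\[
\int_\Omega(u_\Omega-w_\sigma)\varphi\,\dd\vv{x}
=\underbrace{\int_\Omega(u_\Omega-w_\sigma)(E\varphi-P_\sigma^nE\varphi)\,\dd\vv{x}}_{(\mathrm A)}
+\underbrace{\varepsilon\langle Jw_\sigma,P_\sigma^nE\varphi\rangle}_{(\mathrm B)} .
\]

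Term $(\mathrm A)$ I would control by Hölder's inequality, the $l=0$ estimate of \cref{thm:convergence} for the factor $\Vert u_\Omega-w_\sigma\Vert_{L^p(\Omega)}\lesssim\sigma^k\Vert u_\Omega\Vert_{W^{k,p}(\Omega)}$, and \cref{thm:quasiinterpolant} together with the boundedness of $E$ for $\Vert E\varphi-P_\sigma^nE\varphi\Vert_{L^q(\Omega)}\lesssim\sigma^s\Vert\varphi\Vert_{W^{s,q}(\Omega)}$; their product is the desired $\sigma^{k+s}=\sigma^{k-l}$. For $(\mathrm B)$ I would use the symmetry of $J$ and split $w_\sigma=(w_\sigma-P_\sigma^nEu_\Omega)+P_\sigma^nEu_\Omega$. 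The first piece $\langle JP_\sigma^nE\varphi,\,w_\sigma-P_\sigma^nEu_\Omega\rangle$ is handled by the one-sided consistency \eqref{eqn:Jconsistency} applied to $E\varphi$ (contributing $\sigma^s$) against $\Vert w_\sigma-P_\sigma^nEu_\Omega\Vert_{L^p(\Omega_\sigma)}\lesssim\sigma^k$, which follows from \cref{thm:convergence} and \cref{thm:quasiinterpolant} via the triangle inequality through $Eu_\Omega$.

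The main obstacle is the remaining symmetric ghost-penalty term $\langle JP_\sigma^nEu_\Omega,\,P_\sigma^nE\varphi\rangle$: a single application of \eqref{eqn:Jconsistency} exploits the smoothness of only one factor and stalls at $\sigma^{\max\{k,s\}}$, whereas super-convergence demands $\sigma^{k+s}$. To gain the missing power of $\sigma$ I would exploit both factors simultaneously, i.e.\ establish a bilinear refinement of \cref{thm:Jcontinuity}. Applying the per-face trace-and-inverse estimates from its proof to $P_\sigma^nEu_\Omega$ (with $k$ derivatives, in $L^p(F)$) and to $P_\sigma^nE\varphi$ (with $s$ derivatives, in $L^q(F)$) gives $\Vert\llbracket\pd^{n-1}_{\vv{n}_F}P_\sigma^nEu_\Omega\rrbracket\Vert_{L^p(F)}\lesssim\sigma^{k-n+1/q}\Vert u_\Omega\Vert_{W^{k,p}(\hat Q(F))}$ and the analogue $\sigma^{s-n+1/p}$ for $\varphi$; a discrete Hölder inequality over $\mathcal{F}_\sigma$ with exponents $p,q$ (so that $p\neq2$ causes no trouble) then turns the prefactor $\sigma^{2n-1}$ into exactly $\sigma^{2n-1+(k-n+1/q)+(s-n+1/p)}=\sigma^{k+s}$. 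Collecting $(\mathrm A)$ and $(\mathrm B)$ and taking the supremum over $\varphi$ then yields the claim, while the broken-norm case $l=n$ is positive and already settled in the first paragraph.
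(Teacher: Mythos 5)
Your proof is correct and follows the same overall route as the paper: the nonnegative range by restriction from $\Omega_\sigma$ to $\Omega$ via \eqref{eqn:Aconsistency}, and for negative $l$ the identical duality setup, the identical decomposition into the term with $\varphi - P_\sigma^n E\varphi$ and the ghost-penalty term, and the same use of the symmetry of $J$. The single point where you diverge is inside the ghost-penalty term: the paper applies the bilinear trace-plus-inverse argument (\enquote{the same arguments as in the proof of \cref{thm:Jcontinuity}}) directly to $\langle JP_\sigma^nE\varphi,\,A_\varepsilon^{-1}u_\Omega\rangle$, obtaining the bound $\sigma^{k-l}\Vert P_\sigma^nE\varphi\Vert_{W^{-l,q}(\Omega_\sigma)}\Vert A_\varepsilon^{-1}u_\Omega\Vert_{W^{k,p}(\Omega_\sigma)}$ (broken norm for $k=n$) and then concludes with \eqref{eqn:Acontinuity}, \eqref{eqn:Pcontinuity}, and the Stein extension; you instead split $A_\varepsilon^{-1}u_\Omega = \bigl(A_\varepsilon^{-1}u_\Omega - P_\sigma^nEu_\Omega\bigr) + P_\sigma^nEu_\Omega$, dispatch the difference with the one-sided consistency \eqref{eqn:Jconsistency} against an $L^p$ bound, and run the bilinear per-face estimate on the pair of quasi-interpolants. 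Both routes are valid and the power counting $\sigma^{2n-1}\cdot\sigma^{k-n+1/q}\cdot\sigma^{s-n+1/p}=\sigma^{k+s}$ in your version is exactly right; the paper's version is a bit shorter because \eqref{eqn:Acontinuity} already supplies the $W^{k,p}(\Omega_\sigma)$ control of $A_\varepsilon^{-1}u_\Omega$ that renders your extra splitting unnecessary, while your version has the mild virtue of needing only the $l=0$ case of \eqref{eqn:Aconsistency} together with properties of $P_\sigma^n$ and $J$, never \eqref{eqn:Acontinuity} itself. Your identification of the crux --- that one-sided consistency alone stalls at $\sigma^{\max\{k,s\}}$, so a genuinely bilinear estimate with H\"older exponents $p,q$ over the faces is required --- is precisely the content of the paper's appeal to the proof of \cref{thm:Jcontinuity}.
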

\begin{proof}
For non-negative $l$, this result is obtained from \eqref{eqn:Aconsistency} by
restriction from $\Omega_\sigma$ to $\Omega$. Let us thus consider $l<0$ and
denote $u_\sigma := A_\varepsilon^{-1}u_\Omega$. Then, for all $\varphi\in W^{-l,q}(\Omega)$
\begin{equation}
\idx{\Omega}{}{\bigl(u_\Omega - u_\sigma\bigr)\varphi}{\vv{x}} = 
\underbrace{\idx{\Omega}{}{\bigl(u_\Omega - u_\sigma\bigr)PE\varphi}{\vv{x}}}_{=\varepsilon\langle Ju_\sigma,PE\varphi\rangle}  +
\idx{\Omega}{}{\bigl(u_\Omega - u_\sigma\bigr)\bigl(\varphi-PE\varphi\bigr)}{\vv{x}}.
\end{equation}
The second term can be bounded as desired by Hölder's inequality, \eqref{eqn:Pconsistency},
and \eqref{eqn:Aconsistency}:
\begin{multline}
\idx{\Omega}{}{\bigl(u_\Omega - u_\sigma\bigr)\bigl(\varphi-PE\varphi\bigr)}{\vv{x}} \leq
\Vert u_\Omega - A_\varepsilon^{-1}u_\Omega\Vert_{L^p(\Omega)}
\Vert \varphi-PE\varphi\Vert_{L^q(\Omega)} \\
\lesssim \sigma^{k-l}
\Vert u_\Omega\Vert_{W^{k,p}(\Omega)}
\Vert\varphi\Vert_{W^{-l,q}(\Omega)}.
\end{multline}
For the first term note that $J$ is symmetric:
$\langle Ju_\sigma,PE\varphi\rangle = \langle JPE\varphi,u_\sigma\rangle$. We
therefore obtain using the same arguments as in the proof of \cref{thm:Jcontinuity}:
\begin{equation}
\langle JPE\varphi,u_\sigma\rangle \lesssim
\sigma^{k-l}\Vert PE\varphi\Vert_{W^{-l,q}(\Omega_\sigma)}\Vert u_\sigma\Vert_{W^{k,p}(\Omega_\sigma)},
\end{equation}
where for $k=n$ the $W^{k,p}$-norm on the right is to be interpreted in the
\enquote{broken}, element-wise sense. The claim now follows by applying
\eqref{eqn:Acontinuity}, \eqref{eqn:Pcontinuity}, and the continuity of the
Stein extension operator.
\end{proof}

By interpolation these results also extend to the intermediate spaces. The
conditions become slightly technical when interpolating on $k$ and $l$
simultaneously, however. On the other hand by interpolating on only one of
them, one for example immediately obtains:
\begin{align}
\Vert A_\varepsilon^{-1}u_\Omega - Eu_\Omega\Vert_{L^p(\Omega_\sigma)}
&\lesssim \sigma^{s}\Vert u_\Omega\Vert_{W^{s,p}(\Omega)} &  0&\leq s\leq n, \\
\Vert A_\varepsilon^{-1}u_\Omega - u_\Omega\Vert_{W^{s,p}(\Omega)}
&\lesssim \sigma^{n-s}\Vert u_\Omega\Vert_{W^{n,p}(\Omega)} & -n&\leq s\leq n-1.
\end{align}

\subsection{Application to Particle Fields}
Our aim is to apply the approximate extension operator $A_\varepsilon^{-1}$ to
an evolving particle field $W^{-s,p}(\Omega)\ni u_h(t)\approx u(t)\in
W^{s,p}(\Omega)$. For this, we consider the following particle method:
given $n\in\mathbb{N}$, $n>\spdim$, and $\sigma>0$ we will set $h=2^{-k}\sigma$,
$k\in\mathbb{N}_0$, $m=n$, such that $V_\sigma^n(\Omega)\subset V_h^m(\Omega)$.
Given $u_0\in W^{s,p}(\Omega)\cap L^\infty(\Omega)$, $0\leq s\leq n$, $1<p
\leq\infty$, we set $\tilde{u}_{0,h}=\tilde{u}_{0,\sigma}=A_\varepsilon^{-1}u_0$.
The particle approximation $u_{0,h}$ is then constructed from $\tilde{u}_{0,h}$
as described in \cref{subsec:construction}. Finally, $u_h(t)$ is defined by
modifying the particle positions $\vv{x}_i$, $i=1,\ldots,N$ according to the
system of ODEs:
\begin{equation}
\ddx{\vv{x}_i}{t}(t) = \vv{a}(\vv{x}_i(t),t)\qquad i=1,\ldots,N.
\end{equation}
We then obtain the following estimate for the error $\Vert Eu(t)-
A_\varepsilon^{-1}u_h(t)\Vert_{L^p(\Omega_\sigma)}$, which is the main result
of this article.

\begin{theorem}\label{thm:main}
Let $u_0\in W^{s,p}(\Omega)\cap L^{\infty}(\Omega)$, $0\leq s\leq n$, $1<p\leq
\infty$, $n\in\mathbb{N}$, $n > \spdim$, and let the given velocity field $\vv{a}
\in L^\infty\bigl(W^{n,\infty}(\Omega),[0,T]\bigr)$ be sufficiently smooth. Let
the particle approximation $u_h(t)$ be defined as described above. Then for every
$t\in [0,T]$ and for arbitrarily small $\delta>0$ the regularized particle field
$A_\varepsilon^{-1}u_h(t)$ fulfills the following error bound:
\begin{equation}
\Vert Eu(t) - A_\varepsilon^{-1}u_h(t)\Vert_{L^p(\Omega_\sigma)}
\lesssim
\sigma^s                                 \Vert u_0\Vert_{W^{s,p}(\Omega)} +
\left(\frac{h}{\sigma}\right)^{n-\delta} \Vert u_0\Vert_{L^\infty(\Omega)}.
\end{equation}
Moreover, if $s=k$ is an integer, one has for all integers $0\leq l
\leq\min\{k,n-1\}$:
\begin{equation}
\Vert Eu(t) - A_\varepsilon^{-1}u_h(t)\Vert_{W^{l,p}(\Omega_\sigma)}
\lesssim
\sigma^{k-l}                                        \Vert u_0\Vert_{W^{k,p}(\Omega)} +
\sigma^{-l}\left(\frac{h}{\sigma}\right)^{n-\delta} \Vert u_0\Vert_{L^\infty(\Omega)}.
\end{equation}
\end{theorem}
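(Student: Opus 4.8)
The plan is to split the error into a pure \emph{regularization} contribution and a pure \emph{particle} contribution by inserting the regularized exact solution $A_\varepsilon^{-1}u(t)$, where $u(t)$ is read as an element of $V_\sigma^{-n,p}(\Omega_\sigma)$ as described before the theorem:
\begin{equation*}
Eu(t) - A_\varepsilon^{-1}u_h(t) = \bigl[Eu(t) - A_\varepsilon^{-1}u(t)\bigr] + A_\varepsilon^{-1}\bigl[u(t) - u_h(t)\bigr].
\end{equation*}
The first bracket I would control with \cref{thm:convergence} and its interpolated consequences together with the advection stability \eqref{eqn:stableadvection}: since $\Vert u(t)\Vert_{W^{s,p}(\Omega)}\lesssim\Vert u_0\Vert_{W^{s,p}(\Omega)}$ for $-n\le s\le n$, this gives $\Vert Eu(t)-A_\varepsilon^{-1}u(t)\Vert_{L^p(\Omega_\sigma)}\lesssim\sigma^s\Vert u_0\Vert_{W^{s,p}(\Omega)}$, which is exactly the first term on the right-hand side.

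The second bracket I would estimate using the stability of the approximate extension, \cref{thm:stability}, reducing it to $\Vert u(t)-u_h(t)\Vert_{V_\sigma^{-n,p}(\Omega_\sigma)}$. The crucial point is that $u(t)$ and $u_h(t)$ are obtained by transporting their initial data along the same flow, so by linearity of the transport problem $u(t)-u_h(t)$ is the solution with initial datum $u_0-u_{0,h}$; the negative-norm stability \eqref{eqn:stableadvection} then transports the $t=0$ error to all $t\in[0,T]$, so that everything reduces to the initial error $u_0-u_{0,h}$. Writing $u_0-u_{0,h}=(u_0-\tilde u_{0,\sigma})+(\tilde u_{0,\sigma}-u_{0,h})$ with $\tilde u_{0,\sigma}=A_\varepsilon^{-1}u_0$, the first summand is the regularization error on $\Omega$, bounded by \cref{thm:superconvergence}, while the second is the initialization error, bounded by \cref{thm:particleerror} as $\lesssim h^{s}\Vert\tilde u_{0,\sigma}\Vert_{L^p(\Omega_h)}$; here $\Vert\tilde u_{0,\sigma}\Vert_{L^p(\Omega_h)}\lesssim\Vert u_0\Vert_{L^p(\Omega)}\lesssim\Vert u_0\Vert_{L^\infty(\Omega)}$ by the remark following \cref{thm:convergence} and the boundedness of $\Omega$.

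The main obstacle is the passage between the two grid scales. \cref{thm:particleerror} measures the initialization error in $W^{-s,p}(\Omega)$ at the fine scale $h$, whereas \cref{thm:stability} controls $A_\varepsilon^{-1}$ through the coarse dual spline norm $V_\sigma^{-n,p}(\Omega_\sigma)$; passing from the latter to the former costs an inverse factor $\sigma^{-s}$ on the coarse test splines, producing the ratio $(h/\sigma)^{s}$. The difficulty is that the coarse test splines lie only in $B_\infty^{\hat s,q}(\Omega_\sigma)$ with $\hat s=n-1+\tfrac1q<n$, so a naive inverse estimate caps the exponent at $\hat s$ rather than the desired $n$. To reach the near-optimal order $n-\delta$ I would exploit that at $t=0$ the quadrature defining $u_{0,h}$ is exact against all of $V_h^n\supset V_\sigma^n$, so the particle error is invisible to the coarse regularization and enters only through advection breaking this exactness; combined with the fact that the test splines are piecewise polynomial, hence smooth away from a codimension-one skeleton, a careful two-scale estimate — with interpolation absorbing the borderline Besov index, which is what forces the arbitrarily small $\delta$ — yields the claimed $(h/\sigma)^{n-\delta}\Vert u_0\Vert_{L^\infty(\Omega)}$.

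Finally, the integer-order statement follows from the same splitting. The regularization part is bounded directly by \eqref{eqn:Aconsistency} applied to $u(t)$, giving $\sigma^{k-l}\Vert u_0\Vert_{W^{k,p}(\Omega)}$ after using \eqref{eqn:stableadvection} for $\Vert u(t)\Vert_{W^{k,p}(\Omega)}$. The particle part $A_\varepsilon^{-1}[u(t)-u_h(t)]$ is a spline, so raising its norm from $L^p(\Omega_\sigma)$ to $W^{l,p}(\Omega_\sigma)$ via the inverse estimate \eqref{eqn:globalinverseineqrs}, valid for $l\le n-1$, costs exactly the factor $\sigma^{-l}$, producing $\sigma^{-l}(h/\sigma)^{n-\delta}\Vert u_0\Vert_{L^\infty(\Omega)}$ as stated.
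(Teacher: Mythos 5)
Your decomposition is, modulo bookkeeping, the same as the paper's: by linearity of the transport problem, splitting the initial error as $u_0-u_{0,h}=(u_0-\tilde u_{0,\sigma})+(\tilde u_{0,\sigma}-u_{0,h})$ and then transporting is equivalent to the paper's three-term split with the intermediate $\tilde u_h(t)$, the exact advected solution with initial datum $A_\varepsilon^{-1}u_0$, and your treatment of the two regularization contributions matches the paper's. One caveat on ordering: the smooth difference $u(t)-\tilde u_h(t)$ should be bounded in $V_\sigma^{-n,p}(\Omega_\sigma)$ directly by H\"older, $\Vert u(t)-\tilde u_h(t)\Vert_{V_\sigma^{-n,p}(\Omega_\sigma)}\leq\Vert u(t)-\tilde u_h(t)\Vert_{L^p(\Omega)}$, and \emph{not} pushed through the inverse-estimate passage together with the particle part; otherwise this piece requires \cref{thm:superconvergence} interpolated simultaneously in both indices, which the paper itself flags as delicate.

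The genuine gap is in the particle term, and you located it correctly but your proposed repair does not work. For finite $p$ the test splines $v_\sigma\in V_\sigma^{n,q}(\Omega_\sigma)$ lie in $W^{r,q}(\Omega_\sigma)$ only for $r<n-1+\tfrac1q$, so the passage $\Vert\cdot\Vert_{V_\sigma^{-n,p}(\Omega_\sigma)}\lesssim\sigma^{-r}\Vert\cdot\Vert_{W^{-r,p}(\Omega)}$ via \eqref{eqn:globalinverseineqrs} caps the exponent strictly below $n-\tfrac1p$, short of the claimed $n-\delta$. Exploiting quadrature exactness at $t=0$ cannot close this: the identity $\langle\tilde u_{0,h}-u_{0,h},v_\sigma\rangle=0$ for $v_\sigma\in V_\sigma^n$ holds only at the initial time, and propagating any such information to time $t$ requires stability of the advection equation, which holds in $W^{-s,p}(\Omega)$-norms but \emph{not} in $V_\sigma^{-n,p}$-norms --- the paper's own discussion section stresses precisely this point. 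Quantifying how advection destroys the exactness therefore throws you back onto the $W^{-r,p}$ passage and the same cap. The paper's resolution is much simpler: since $\Omega_\sigma$ is bounded and the target bound carries $\Vert u_0\Vert_{L^\infty(\Omega)}$ anyway, first bound $\Vert\cdot\Vert_{L^p(\Omega_\sigma)}\lesssim\Vert\cdot\Vert_{L^\infty(\Omega_\sigma)}$ and then run the entire chain for this term with $p=\infty$, $q=1$: stability \eqref{eqn:stability} in $L^\infty$, the inverse estimate \eqref{eqn:globalinverseineqrs} with exponent $1$ (whose admissible range is all $r<n$, because $V_\sigma^n(\Omega_\sigma)\subset W^{r,1}(\Omega_\sigma)$ for $r<n$ --- exactly the reason the paper introduces fractional-order spaces), advection stability \eqref{eqn:stableadvection} in $W^{-r,\infty}(\Omega)$, and \cref{thm:particleerror} with $q=1$, which is applicable since $r\geq\spdim$ by $n>\spdim$. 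This yields $(h/\sigma)^{r}\Vert\tilde u_{0,h}\Vert_{L^\infty(\Omega_h)}\lesssim(h/\sigma)^{n-\delta}\Vert u_0\Vert_{L^\infty(\Omega)}$, and the strict inequality $r<n$ is the sole source of the $\delta$; no further two-scale argument is needed. With this switch in place, your final step for integer $l$ --- raising the spline's norm from $L^p(\Omega_\sigma)$ to $W^{l,p}(\Omega_\sigma)$ at cost $\sigma^{-l}$ --- is correct as written.
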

\begin{proof}
Let us denote by $u(t)$ and $\tilde{u}_h(t)$ the respective exact solutions of
the advection equation with initial data $u_0$ and $\tilde{u}_{0,h}$. We can split
the error into three parts:
\begin{multline}
\Vert Eu(t)-A_\varepsilon^{-1}u_h(t)\Vert_{W^{l,p}(\Omega_\sigma)} \leq
\overbrace{\Vert Eu(t)-A_\varepsilon^{-1}u(t)\Vert_{W^{l,p}(\Omega_\sigma)}}^{\mathrm{(I)}} + \\
\underbrace{\Vert A_\varepsilon^{-1}\bigl(u(t)-\tilde{u}_h(t)\bigr)\Vert_{W^{l,p}(\Omega_\sigma)}}_{\mathrm{(II)}} +
\underbrace{\Vert A_\varepsilon^{-1}\bigl(\tilde{u}_h(t)-u_h(t)\bigr)\Vert_{W^{l,p}(\Omega_\sigma)}}_{\mathrm{(III)}}.
\end{multline}

By \cref{thm:convergence} and the stability of the advection equation~%
\eqref{eqn:stableadvection} we have $\mathrm{(I)}\lesssim\sigma^{s}\Vert
u_0\Vert_{W^{s,p}(\Omega)}$ for $l=0$ and respectively $\mathrm{(I)}\lesssim
\sigma^{k-l}\Vert u_0\Vert_{W^{k,p}(\Omega)}$ otherwise. For the terms
$\mathrm{(II)}$ and $\mathrm{(III)}$ it suffices to consider the case $l=0$, the
other cases follow by the inverse estimate \cref{eqn:globalinverseineqrs}.
For $\mathrm{(II)}$ we first make use of \cref{thm:stability} to obtain
$\mathrm{(II)}\lesssim\Vert u(t)-\tilde{u}_h(t)\Vert_{V^{-n,p}(\Omega_\sigma)}$. 
By H\"older's inequality we see that $\Vert u(t)-\tilde{u}_h(t)
\Vert_{V^{-n,p}(\Omega_\sigma)} \leq \Vert u(t)-\tilde{u}_h(t)
\Vert_{L^p(\Omega)}$ and subsequently obtain by the same arguments as for
the first term that: $\mathrm{(II)}\lesssim \sigma^s\Vert u_0
\Vert_{W^{s,p}(\Omega)}$.

For the last term we denote $r:=n-\delta$ and note that because $r\geq\spdim$
we have $u_h\in W^{-r,\infty}(\Omega)$. Furthermore, one trivially has
$\Vert\cdot\Vert_{L^p(\Omega_\sigma)} \lesssim \Vert\cdot\Vert_{L^\infty(\Omega_\sigma)}$.
Thus
\begin{equation}
\mathrm{(III)}\lesssim
\Vert A_\varepsilon^{-1}\bigl(\tilde{u}_h(t)-u_h(t)\bigr)\Vert_{L^\infty(\Omega_\sigma)}
\stackrel{\cref{eqn:stability}}{\lesssim}
\Vert \tilde{u}_h(t)-u_h(t)\Vert_{V^{-n,\infty}(\Omega_\sigma)}.
\end{equation}
At this point we make use of the fact that we have
$V_\sigma^n(\Omega_\sigma)\subset W^{r,1}(\Omega_\sigma)$, $n-1<r<n$; the reason
why we introduced fractional order Sobolev spaces. By inverse estimates one obtains:
\begin{multline}
\Vert\tilde{u}_h(t)-u_h(t)\Vert_{V^{-n,\infty}(\Omega_\sigma)}
= \sup_{v_\sigma\in V^{n,1}(\Omega_\sigma)}
\langle \tilde{u}_h(t)-u_h(t),v_\sigma\rangle / \Vert v_\sigma\Vert_{L^1(\Omega_\sigma)}\\
\stackrel{\cref{eqn:globalinverseineqrs}}{\lesssim}
 \sigma^{-r}{\kern -1em}\sup_{v_\sigma\in V^{n,1}(\Omega_\sigma)}
\langle\tilde{u}_h(t)-u_h(t),v_\sigma\rangle/\Vert v_\sigma\Vert_{W^{r,1}(\Omega_\sigma)}
\leq \sigma^{-r} \Vert \tilde{u}_h(t)-u_h(t)\Vert_{W^{-r,\infty}(\Omega)}.
\end{multline}
Now, by the stability of the advection equation~\eqref{eqn:stableadvection} and
\cref{thm:particleerror}:
\begin{multline}
\sigma^{-r}\Vert\tilde{u}_h(t)-u_h(t)\Vert_{W^{-r,\infty}(\Omega)}  \lesssim
\sigma^{-r}\Vert\tilde{u}_{0,h}-u_{0,h}\Vert_{W^{-r,\infty}(\Omega)} \\ \lesssim
\biggl(\frac{h}{\sigma}\biggr)^r\Vert\tilde{u}_{0,h}\Vert_{L^\infty(\Omega_\sigma)}
\lesssim \biggl(\frac{h}{\sigma}\biggr)^r\Vert u_0\Vert_{L^\infty(\Omega)}.
\end{multline}
\end{proof}

When restricted to the domain $\Omega$, it is a simple task to confirm that this
result also holds for negative $l$, analogous to the super-convergence result 
\cref{thm:superconvergence}. If one assumes that the exact solution is smooth,
these results suggest choosing $h\sim\sigma^2$ in order to balance the error
contributions, similar to the earliest analyses.~\cite{hald1979} In that case
this choice in particular implies that one essentially has (up to $\delta$),
$\Vert u(t)-A_\varepsilon^{-1}u_h(t)\Vert_{W^{-n,p}(\Omega)}=
\mathcal{O}(\sigma^{2n})=\mathcal{O}(h^n)$. In other words
$A_\varepsilon^{-1} u_h(t)$ and $u_h(t)$ asymptotically fulfill the same error
bound which is the most one can expect from a regularization scheme.

\section{Discussion and Outlook}\label{sec:outlook}
In general, it is inherently difficult to choose $h$ such that the error
contributions from regularization and quadrature are balanced. In particular,
one usually does not know a-priori how smooth the solution actually is. Let us
first consider the choice $h=\sigma$. Clearly, upon initialization, we have
$\Vert\tilde{u}_{h,0}-u_{h,0}\Vert_{V_\sigma^{-n,p}(\Omega_\sigma)} = 0$, and it
is unlikely that for small times $0<t\ll T$ this error immediately increases to
significant levels. On the other hand, it is well-known from computational
practice that this choice of $h$ does not lead to converging schemes for
extended periods of time. After all, the advection equation is stable in
$W^{-s,p}$- and not in $V_\sigma^{-n,p}$-norms. This motivates so-called
\emph{remeshed particle methods}, where the particle field is reinitialized
with its regularized version after every other time-step or so. Practice has
shown that these methods seem to work well.

On the other hand, the choice $h\sim\sigma^2$ requires one to manage
significantly larger numbers of particles which at early times $t$ do not
significantly improve the method's accuracy. But there also is an advantage to
this approach: such a particle field carries sub-$\sigma$-scale information
about small features, which can arise over time due to the distortion of $u_0$
by the velocity field. Furthermore, in a computer implementation it is easy to
handle large numbers of particles, as there is no connectivity involved. A
reinitialization of the particle field destroys this sub-grid information.

In practice, particle fields tend to get thinned out in some parts of the domain,
and clustered in others. In fact, being an exact solution, particle fields
naturally \emph{adapt} to the flow field. It would thus also make sense to
adaptively regularize. The spline spaces discussed in this article famously
form a multi-resolution analysis and the approximate extension operator
yields approximations of smooth extensions on the whole-space. This opens
up the possibility to use wavelets. One way to achieve adaptive 
regularization might be to first choose $h=\sigma$ and compute the
regularized particle field as discussed in this paper. Afterwards one would
perform a fast wavelet transform on the regularized particle field and filter
out high-oscillatory components with large wavelet coefficients by a
thresholding procedure. Such an approach has been used successfully before
in the whole-space case~\cite{chehab2005} and might be able combine the best
of both approaches.

\section*{Acknowledgments}The author thanks Jan Giesselmann, Christian
Rieger, and Manuel Torrilhon for their comments on a preliminary version of
this manuscript. The author would furthermore like use the opportunity to
express his gratitude for his former PhD advisor Shinnosuke Obi of Keio
University for both his personal and scientific support. Many thanks also go to
the Japanese Ministry of Education (MEXT) for the scholarship support that
made the stay possible.

\appendix
\bibliographystyle{siamplain}
\bibliography{papers}
\end{document}